
\documentclass{amsart}
\usepackage{amsmath,amssymb,dsfont,amsthm}
\usepackage{graphics}
\usepackage{xypic}
\usepackage{latexsym}
\usepackage{amsmath}
\usepackage{amsfonts}
\usepackage{geometry}
\usepackage{amssymb}
\usepackage{tensor}

\date{\today}

\title[Analytic torsion of cones over spheres]{The boundary term  from the Analytic Torsion of a cone over a $m$-dimensional sphere.}

\thanks{2010 {\em Mathematics Subject Classification: 58J52}.\\
}

\author{L. Hartmann}
\address{\tt UFSCar, Universidade Federal de S\~{a}o Carlos,  Brazil.  Partially supported by CNPq and FAPESP 2013/04396-6}
\email{hartmann@dm.ufscar.br}

\numberwithin{equation}{section}

\newtheorem{theo}{Theorem}[section]
\newtheorem{lem}{Lemma}[section]
\newtheorem{corol}{Corollary}[section]
\newtheorem{defi}{Definition}[section]

\newtheorem{prop}{Proposition}[section]

\setcounter{tocdepth}{1}

\renewcommand{\Re}{{\rm Re}}

\newcommand{\Sp}{{\rm Sp}}
\newcommand{\beq}{\begin{equation}}
\newcommand{\eeq}{\end{equation}}
\newcommand{\N}{{\mathds{N}}}
\newcommand{\Z}{{\mathds{Z}}}
\newcommand{\R}{{\mathds{R}}}
\newcommand{\C}{{\mathds{C}}}

\newcommand{\Q}{{\mathds{Q}}}

\newcommand\e{{\rm e}}
\newcommand{\A}{{\mathcal{A}}}
\renewcommand{\H}{{\mathcal{H}}}

\newcommand{\B}{{\mathcal{B}}}

\renewcommand{\b}{{\partial}}

\newcommand{\ec}{{\mathsf e}}
\renewcommand{\ge}{{\mathsf g}}

\renewcommand{\det}{{\rm det}}

\newcommand{\E}{{\mathcal{E}}}

\date{}

\DeclareMathOperator*{\Rz}{Res_0}

\DeclareMathOperator*{\Ru}{Res_1}

\begin{document}


\maketitle

\begin{abstract}  We present a direct proof that the Anomaly Boundary term of J. Br\"uning and X. Ma \cite{BM1,BM2} generalizes to the cases of the cone over a $m$-dimensional sphere. 
%
%
\end{abstract}


\section{Introduction} 
\label{s0} 

The Analytic torsion was defined by D. B. Ray and I. M. Singer \cite{RS} answering the question as to how describe the Reidemeister torsion, which is a 
manifold invariant, in analytic terms. In the same article, they conjectured the equality of both torsion in the case of 
a closed Riemannian manifold. A few years latter, J. Cheeger \cite{Che1} and W. M\"uller \cite{Mul} proved this conjecture with different approaches. J. 
Cheeger used surgery theory to reduced to the case of spheres and W. M\"uller used Hodge's combinatory theory. This equality between the two 
torsions is the celebrated Cheeger-M\"uller theorem.  After that many generalizations of this theorem arises (see \cite{BZ} and references therein).
A natural question about the Cheeger-M\"uller theorem is your extension to manifolds with boundary. W. L\"uck \cite{Luc} studied this situation, but in the 
case that the metric of the manifold has a product structure near the boundary and he proved a Cheeger-M\"uller theorem with this additional hypothesis. In this formula,
the Analytic torsion is equal to the Reidemeister torsion plus the Euler characteristic of the boundary. Recently, J. Br\"uning and X. Ma \cite{BM1,BM2} proved the extension of Cheeger-M\"uller theorem for manifolds with boundary. In this 
situation, a new term appears in the equality of the two torsions, and this term is called anomaly boundary term. Another possible
extension of Cheeger-M\"uller theorem is for manifolds with conical singularities. Manifolds with conical singularities was studied by J. Cheeger \cite{Che0, Che2, Che3}. Recently, many authors presented new informations for this problem. In \cite{HS1}, M. Spreafico and the author, presented a qualitative result about the extension of Cheeger-M\"uller theorem for a cone over a sphere of dimension $1$, $2$ and $3$, and we conjectured that the anomaly 
boundary term presented by Br\"uning and Ma is the same in the case of the cone over a sphere. This conjecture was answered for a general closed Riemannian manifold by the author and M. Spreafico in \cite{HS2} and independently by B. Vertman in \cite{Ver1}, but in both cases the proofs are by an indirect argument. The main motivation of this paper is present another approach of this fact with a direct argument, i.e, we calculate the Analytic torsion over a $m$-dimensional
sphere and prove that the contribution of the boundary in the analytic torsion is the anomaly boundary term of Br\"uning and Ma. Recently, 
the author and M. Spreafico proved the extension of Cheeger-M\"uller theorem for the even dimension cone over a closed Riemannian manifold \cite{HS4}. The odd dimensional case is still open.

The paper is organized as follows. In section \ref{s1} we present the fundamental terminology and notation, in section \ref{Lap1} we discuss the Laplacian operator in a finite metric cone, in section \ref{s5} we present all facts about the calculation of the Analytic torsion of a finite metric cone and in the last section, we prove the following main results of this paper.

\begin{theo}\label{t02} If $S^{2p-1}_{\sin\alpha}$ is the odd dimensional sphere (of radius $\sin\alpha$), with the standard induced Euclidean metric, then the Anomaly Boundary contribution in the Analytic Torsion of $C_l S^{2p-1}_{\sin\alpha}$ is the Anomaly Boundary term of Br\"uning and Ma, namely $A_{\rm BM}(\b C_lS^{2p-1}_{\sin\alpha})$. In this case, the formula for the Analytic torsion reads
\begin{align*}
\log T_{\rm abs}(C_lS^{2p-1}_{\sin\alpha})= \frac{1}{2}\log  {\rm Vol} (C_l S^{2p-1}_{\sin\alpha})+ A_{\rm BM}(\b C_lS^{2p-1}_{\sin\alpha}),
\end{align*}
where
\begin{align*}
 A_{\rm BM}(\b C_l S^{2p-1}_{\sin\alpha})=&\frac{(2p-1)!}{4^p (p-1)!}\sum_{k=0}^{p-1} \frac{1}{(p-1-k)!(2k+1)} \sum^{k}_{j=0}
\frac{(-1)^{k-j}2^{j+1}}{(k-j)!(2j+1)!!}\sin^{2k+1}\alpha.
\end{align*}
\end{theo}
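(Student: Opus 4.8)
The plan is to reduce everything to the master formula for the analytic torsion of a finite metric cone established in Section~\ref{s5}, and then to carry out the spectral bookkeeping explicitly for the round odd-dimensional sphere. Writing $C_lN=\bigl((0,l]\times N,\;dr^2+r^2g_N\bigr)$ with $N=S^{2p-1}_{\sin\alpha}$ of dimension $m=2p-1$, forms on the cone separate into radial and tangential parts, and mode by mode the Hodge Laplacian on $q$-forms becomes a Bessel-type operator whose index $\mu_{q,n}$ is built from the eigenvalue $\lambda_{q,n}$ of the Hodge Laplacian on coexact $q$-forms on $N$ together with the degree shift $(m-2q)/2$. So the first step is to insert into that master formula the explicit spectral resolution of the Hodge Laplacian on the round sphere $S^{2p-1}$: the eigenvalues $\lambda_{q,n}$ of the coexact part and their multiplicities $m_{q,n}$ are classical, and rescaling by $\sin^2\alpha$ records the $\alpha$-dependence in a controlled way. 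This exhibits $\log T_{\rm abs}(C_lS^{2p-1}_{\sin\alpha})$ as a doubly indexed combination of derivatives at $s=0$ of Bessel zeta functions.

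The second step exploits the geometry of the odd sphere to prune this combination. Since the de~Rham cohomology of $S^{2p-1}$ sits only in degrees $0$ and $2p-1$, the harmonic modes contribute nothing beyond a trivial factor, so there is no Reidemeister-torsion contribution from $N$; this is precisely why the final answer contains only the volume term besides the boundary anomaly. Moreover Poincar\'e duality on $S^{2p-1}$ matches the coexact $q$-form spectrum with the coexact $(2p-2-q)$-form spectrum, which lets one fold the sum in $q$ about its middle and symmetrize the Bessel indices; this is what turns the weights $(-1)^q q$ into the clean range $0\le k\le p-1$ appearing in the statement.

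The third step is the actual evaluation of the regularized determinants. Each surviving term is $\left.\tfrac{d}{ds}\right|_{s=0}$ of a series $\sum_n m_{q,n}\,z(s,\mu_{q,n})$, where $z(s,\mu)$ is the one-dimensional Bessel zeta function attached to the radial problem on $(0,l]$ with the absolute boundary condition at $r=l$. Using the uniform large-order asymptotic expansion of $\log I_\nu$ and the associated polynomials recorded in Section~\ref{s5}, one isolates the finitely many terms of the expansion that contribute at $s=0$ after analytic continuation of the sum over $n$; the non-polynomial remainder is regular there and drops out. The contributions coming from the asymptotics at $r\to0$ reassemble into $\tfrac12\log\Vol(C_lS^{2p-1}_{\sin\alpha})$, while those coming from the non-product collar at $r=l$ reassemble into a finite sum in $(q,n,\alpha)$. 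I expect \emph{this} step to be the main obstacle: the analytic continuation of the double series and the precise extraction of its $s=0$ part, together with the binomial and double-factorial identities needed to collapse the resulting finite sum into the closed form $\frac{(2p-1)!}{4^p(p-1)!}\sum_{k=0}^{p-1}\cdots$.

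Finally, I would compute $A_{\rm BM}(\b C_lS^{2p-1}_{\sin\alpha})$ directly from the Br\"uning--Ma formula and check it against the finite sum produced above. The boundary $\b C_lS^{2p-1}_{\sin\alpha}$ is the round sphere of radius $l\sin\alpha$, realized as a hypersurface of the cone; its intrinsic curvature is constant and its second fundamental form, read off from $dr^2+r^2g_N$ at $r=l$, is a constant multiple of the identity. Hence the Br\"uning--Ma integrand---a universal polynomial in curvature and second fundamental form---is a constant times the Riemannian volume form of the boundary, and the integral reduces to that constant times $\Vol(S^{2p-1}_{l\sin\alpha})$; evaluating both factors yields exactly the double sum in the statement. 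The only remaining care is to match normalization conventions (orientation, the weight $(-1)^q q$, and the precise definition of $A_{\rm BM}$) between the two computations, after which the identification---and, with the volume term, the theorem---follows.
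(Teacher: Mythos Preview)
Your high-level architecture matches the paper's: use the master formula of Section~\ref{s5} for $\log T(C_lW)$, specialize to $W=S^{2p-1}_{\sin\alpha}$, compute $A_{\rm BM}$ directly from the Br\"uning--Ma integrand (which for the round sphere is indeed constant), and compare. But two points in your outline are off and would block the proof as written.

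First, the provenance of the volume term. The master formula (Theorem~\ref{odd case}) already splits $\log T(C_lW)$ as $\tfrac12\log T(W,l^2\tilde g)+\tfrac12\sum_{q=0}^{p-1}(-1)^q r_q\log\frac{l}{2(p-q)}+\log T_{\rm AB}$. The first summand is the Ray--Singer torsion of the section, and for the odd sphere it is \emph{not} trivial: $\log T(S^{2p-1}_{l\sin\alpha})=\log{\rm Vol}(S^{2p-1}_{l\sin\alpha})$. It is this term, combined with the harmonic-mode term, that produces $\tfrac12\log{\rm Vol}(C_lS^{2p-1}_{\sin\alpha})$. It does not come from ``asymptotics at $r\to0$'', and your assertion that ``there is no Reidemeister-torsion contribution from $N$'' is incorrect.

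Second, and more seriously, you have not identified the mechanism that makes the combinatorial matching $\log T_{\rm AB}=A_{\rm BM}$ go through; this is the actual content of the paper. Both sides are shown to be odd polynomials in $\nu^{-1}=\sin\alpha$, and the coefficients are matched termwise. On the torsion side the coefficient of $\nu^{-(2k+1)}$ is $\sum_{j}M_j(p,k)$ with $M_j(p,k)$ built from $F(q,j)=\Rz_{s=0}\Phi^{\rm odd}_{2j+1,q}(s)$ and from residues of $\zeta(s,U_q)$. The crucial input is a recurrence (Lemma~\ref{l15}, Corollary~\ref{c44}) showing that $F(q,j)=\frac{2}{2j+1}\alpha_q^{2j}+(\text{polynomial of degree}<j\text{ in }\alpha_q^2)$; then the finite-difference identity $\sum_{q=0}^{p-1}(-1)^q\binom{2p-2}{q}\alpha_q^{2m}=0$ for $m<p-1$ annihilates every lower-order piece in the $q$-sum, so only the leading $\frac{2}{2j+1}\alpha_q^{2j}$ survives. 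That surviving piece is then collapsed to $N_j(p,k)$ via the Vandermonde-type identity $\sum_l\binom{p}{p-1-l}\binom{-1/2}{l-k}=\binom{p-1/2}{p-1-k}$. Without isolating this cancellation structure, your ``binomial and double-factorial identities'' step is a gap rather than a routine computation.
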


\begin{theo}\label{teven}
If $S^{2p}_{\sin\alpha}$ is the even dimensional sphere (of radius $\sin\alpha$), with the standard induced Euclidean metric, then the Anomaly Boundary contribution in the Analytic Torsion of $C_l S^{2p}_{\sin\alpha}$ is the Anomaly Boundary term of Br\"uning and Ma, namely $A_{\rm BM}(\b C_lS^{2p-1}_{\sin\alpha})$, i.e.,
\begin{align*}
 A_{\rm BM}(\b C_l S^{2p}_{\sin\alpha})=&\frac{\sin^{2p}\alpha}{8} \sum^{p-1}_{j=0} \frac{1}{j!(p-j)!} \sum^{j}_{h=0} \binom{j}{h}\frac{(-1)^h 2 \sin^{2(h-j)}\alpha}{p-j+h}.
\end{align*}

\end{theo}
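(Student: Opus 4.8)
The plan is to prove the two assertions of the statement by two independent explicit computations and then to compare them. On one side, I will compute $\log T_{\rm abs}(C_l S^{2p}_{\sin\alpha})$ from the description of the analytic torsion of a finite metric cone recalled in Section~\ref{s5}, and isolate inside it the part governed by the geometry of the boundary cross section $\{r=l\}$; this is, by definition, the boundary contribution. On the other side, I will evaluate the Br\"uning--Ma anomaly boundary term $A_{\rm BM}(\b C_l S^{2p}_{\sin\alpha})$ directly from its defining formula as a Berezin integral over the boundary sphere, obtaining the closed double sum in the statement. The theorem is then the assertion that these two quantities coincide, exactly as was done for the even-dimensional cone in Theorem~\ref{t02}, but without the intermediate ``$\tfrac12\log\Vol$'' identity, since in odd dimension the full torsion also carries intersection-cohomology contributions that we do not need here.

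For the first computation I would feed the spectral data of the cross section $W=S^{2p}_{\sin\alpha}$ into the general formula of Section~\ref{s5}: the well-known spectrum of the Hodge--de Rham Laplacian on coexact $q$-forms on the round sphere, with multiplicities expressible through binomial coefficients, together with $H^0(S^{2p})\cong H^{2p}(S^{2p})\cong\R$ and all intermediate cohomology vanishing. Separation of variables turns the cone Laplacian into a family of Bessel-type operators on $(0,l)$, indexed by pairs (eigenvalue, degree), with Bessel index of the form $\sqrt{\lambda_{q,n}+c_q}$ for a shift $c_q$ depending on $q$ and $\dim W=2p$. The torsion zeta function then splits as a double sum over these operators, and the evaluations of $\zeta'(0)$ for Bessel operators quoted in Section~\ref{s5} reduce each term to elementary expressions plus a piece built from the large-index asymptotic expansion of the relevant Bessel functions. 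Summing and rearranging, I expect the result to separate into a topological/``regular'' part and a remainder that depends only on the boundary data; collecting the coefficients of the Bessel-asymptotic heads yields this remainder as a finite sum with factorials, double factorials and alternating signs.

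For the second computation I would use that $A_{\rm BM}(\b C_l S^{2p}_{\sin\alpha})$ is a universal integral over $\b C_l S^{2p}_{\sin\alpha}=S^{2p}_{l\sin\alpha}$ of a differential form obtained by a Berezin integral in the transgression variables from the second fundamental form $S$ of the boundary inside the cone and the curvature $R$ of the boundary. Here the boundary is totally umbilic (principal curvatures all equal to $1/l$) and $R$ is that of a round sphere, so both inputs are scalar multiples of the metric, the Berezin integral collapses to a finite sum of Pfaffian-type monomials, and a direct evaluation --- together with $\Vol(S^{2p}_{l\sin\alpha})$ --- produces
\begin{align*}
A_{\rm BM}(\b C_l S^{2p}_{\sin\alpha})=\frac{\sin^{2p}\alpha}{8}\sum_{j=0}^{p-1}\frac{1}{j!(p-j)!}\sum_{h=0}^{j}\binom{j}{h}\frac{(-1)^h\,2\,\sin^{2(h-j)}\alpha}{p-j+h},
\end{align*}
as claimed; the prefactor $\sin^{2p}\alpha$ reflects the volume of $S^{2p}_{\sin\alpha}$, and the $l$-dependence cancels --- as it must for an anomaly term --- since the terms $R^aS^b$ with $2a+b=2p$ and $b\ge 1$ carry the weight $l^{0}\sin^{2p-2a}\alpha$.

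The main obstacle is the reconciliation of the two finite sums. The spectral side delivers a sum whose shape is dictated by the Bessel asymptotic coefficients and by the alternating combinatorics coming from the cone exterior derivative, while the Br\"uning--Ma side delivers the binomial double sum above; proving their equality is a nontrivial combinatorial identity, which I would attack by generating functions or by induction on $p$, using the cross-section eigenvalue formulas to pin down the free parameters. A secondary, more analytic, difficulty is to keep rigorous control of the meromorphic continuation of the constituent zeta functions near $s=0$ --- their poles, residues and finite parts --- so that the ``regular'' part is unambiguously separated from the boundary part; the even-dimensionality of $W$, equivalently the odd-dimensionality of the cone, is precisely what makes the intrinsic contributions cancel and leaves a remainder supported at the boundary.
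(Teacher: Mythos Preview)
Your high-level plan matches the paper's: compute the boundary contribution on the spectral side (via the residues of the section zeta functions and the finite parts $\Rz_{s=0}\Phi^{\rm even}_{2j,q}(s)$), take the closed formula for $A_{\rm BM}(\b C_l S^{2p}_{\sin\alpha})$ (computed in \cite{HS1} by the Berezin integral you describe), rewrite both as polynomials in $\nu^{-1}=\sin\alpha$, and match coefficients. But the outline has a real gap at the matching step.

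The combinatorial identity you anticipate is not a clean identity between two explicit finite sums: on the spectral side the numbers $F(q,j)=\Rz_{s=0}\Phi^{\rm even}_{2j,q}(s)$ are a priori opaque constants coming from the uniform Bessel asymptotics, and neither generating functions nor induction on $p$ will resolve them unless you first establish their structure as functions of $q$. The paper's key lemma (the even analogue of Lemma~\ref{l15} and Corollary~\ref{c44}) is that
\[
\Rz_{s=0}\Phi^{\rm even}_{2j,q}(s)=-\frac{\alpha_q^{2j-1}}{j}+\alpha_q\sum_{t=0}^{j-2}K_{2j,t}\,\alpha_q^{2t},
\]
an odd polynomial in $\alpha_q$ whose leading coefficient is explicit and whose lower coefficients $K_{2j,t}$ are \emph{independent of $q$}. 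This is proved by a recursion on the Debye polynomials $U_k,V_k$ and the functions $l^\pm_k$ appearing in the large-order Bessel expansion, not by any soft argument. Once you have it, the alternating sum $\sum_q(-1)^q\binom{2p-1}{q}F(q,j)(\cdots)$ collapses via the elementary identities $\sum_k(-1)^k\binom{N}{k}(\alpha+k)^n=0$ for $n<N$ and $=(-1)^N N!$ for $n=N$: all the unknown $K_{2j,t}$-terms are killed because they contribute powers of $\alpha_q$ below the critical degree, and only the explicit leading term survives. Without this structural lemma your proposed attack would have to determine each $K_{2j,t}$ individually, which is neither feasible nor necessary.

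A smaller point: the anomaly boundary contribution here is not something you ``isolate'' from the full torsion by subtracting a regular part; it is \emph{defined} (Definition~5.1) as the specific finite sum $\tfrac12\sum_q(-1)^q\sum_j\Rz_{s=0}\Phi^{\rm even}_{2j,q}(s)\,\Ru_{s=j}\zeta_{\rm cex}(s,\tilde\Delta^{(q)}+\alpha_q^2)$. The other pieces of $\log T$ in Theorem~\ref{even case} (the $\mathcal{A}_{0,0,q}(0)$ term, the harmonic contributions, the $\chi(W)\log 2$) are not part of what you are asked to compute, so your discussion of separating ``intrinsic contributions'' by cancellation is off target.
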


\section{Preliminary}
\label{s1}

In this section we will recall some basic results in  Riemannian Geometry, Hodge de Rham theory, Global Analysis and the definitions of the main objects we will deal with in this work. All the results are contained in \cite{Che2,HS2,RS}.

\subsection{Some Riemannian geometry and Hodge theory}
\label{rim}\label{hodg}

Let $(W,g)$ be an orientable compact connected Riemannian manifold of dimension $m$ without boundary, where $g$ denotes the Riemannian structure. 
We denote by $TW$ the tangent bundle over $W$, and by $T^* W$ the dual bundle. 

Let $\rho:\pi_1(W)\to O(k,\R)$ be a representation of the fundamental group of $W$ in the real orthogonal group of dimension $k$, and let  $E_\rho=\widetilde W\times_\rho \R^k$ be the associated vector bundle over $W$ with fibre $\R^k$ and 
group $O(k,\R)$. We denote by $\Omega(W,E_\rho)$ be the graded linear space of $q$-smooth forms on $W$ with values in 
$E_\rho$, namely $\Omega(W,E_\rho)=\Omega(W)\otimes E_\rho$.  The exterior differential on $W$ defines the exterior differential on $\Omega^q(W, E_\rho)$, 
$d:\Omega^q(W, E_\rho)\to \Omega^{q+1}(W, E_\rho)$ and $g$ defines the Hodge operator on $W$, and hence on $\Omega^q(W, E_\rho)$, $\star:\Omega^q(W, E_\rho)\to\Omega^{m-q}(W, E_\rho)$. Using the inner product $\langle\_,\_\rangle$ in $E_\rho$,
an inner product on $\Omega^q(W, E_\rho)$ is defined by
\beq\label{inner}
(\omega,\eta)=\int_W \langle \omega\wedge\star\eta \rangle.
\eeq

The closure of  $\Omega^{q}(W;E_{\rho})$ with respect to this inner product is the Hilbert space of $L^2$ $q$-forms on $W$ with values in $E_\rho$. The de Rham complex with this product is an elliptic complex. The dual of the exterior derivative $d^\dagger$, defined by $(\alpha, d\beta)=(d^\dagger \alpha, \beta)$, satisfies $d^\dagger=(-1)^{mq+m+1} \star d\star$. The Laplace operator is $\Delta=(d+d^\dagger)^2$. It satisfies:   1) $\star\Delta=\Delta\star$, 2) $\Delta$ is self adjoint, and 3)
 $\Delta\omega=0$ if and only if $d\omega=d^\dagger \omega=0$. Let $\H^q(W;E_\rho)=\{\omega\in \Omega^{(q)}(W;E_\rho)~|~\Delta\omega=0\}$, be the space of the $q$-harmonic forms with values in $E_\rho$.
%
Then, we have the {\it Hodge decomposition}
\beq\label{hodge1}
\Omega^{q}(W,E_\rho)=\H^q(W,E_\rho)\oplus d\Omega^{q-1}(W,E_\rho)\oplus d^\dagger \Omega^{q+1}(W,E_\rho).
\eeq

This  induces a decomposition of the eigenspace of a given eigenvalue $\lambda\not=0$ of $\Delta^{(q)}$ into the spaces of {\it closed forms} and {\it coclosed forms}: $\E^{(q)}_\lambda=\E^{(q)}_{\lambda, {\rm cl}}\oplus \E^{(q)}_{\lambda,{\rm ccl}}$, where
\begin{align*}
\E^{(q)}_{\lambda,{\rm cl}}&=\{\omega\in\Omega^{q}(W,E_\rho)~|~\Delta\omega=\lambda\omega, \, d\omega=0\},\;
\E^{(q)}_{\lambda,{\rm ccl}}=\{\omega\in\Omega^{q}(W,E_\rho)~|~\Delta\omega=\lambda\omega, \, d^\dagger\omega=0\}.
\end{align*}
The {\it exact forms} and {\it coexact forms} are defined by
\begin{align*}
\E^{(q)}_{\lambda,{\rm ex}}&=\{\omega\in\Omega^{q}(W,E_\rho)~|~\Delta\omega=\lambda\omega, \, \omega=d\alpha\},\; \E^{(q)}_{\lambda,{\rm cex}}=\{\omega\in\Omega^{q}(W,E_\rho)~|~\Delta\omega=\lambda\omega, \, \omega=d^\dagger\alpha\}.
\end{align*}
Note that, if $\lambda\not=0$, then  $\E^{(q)}_{\lambda,{\rm cl}}=\E^{(q)}_{\lambda,{\rm ex}}$, and $\E^{(q)}_{\lambda,{\rm ccl}}=\E^{(q)}_{\lambda,{\rm cex}}$,
and we have an  isometry
\beq\label{iso1}\begin{aligned}
\phi:&\E^{(q)}_{\lambda,{\rm cl}}\to\E^{(q-1)}_{\lambda,{\rm cex}},\;
\phi: \omega\mapsto \frac{1}{\sqrt{\lambda}}d^\dagger \omega,
\end{aligned}
\eeq
whose inverse is $\frac{1}{\sqrt{\lambda}}d$. Also, the restriction of the Hodge star defines an isometry
\begin{align*}
\star:& d^\dagger \Omega^{(q+1)}(W)\to d\Omega^{(m-q-1)}(W),
\end{align*}
and that composed with the previous one gives the isometries:
\beq\label{it}
\begin{aligned}
\frac{1}{\sqrt{\lambda}}d\star &:\E^{(q)}_{\lambda,{\rm cl}}\to\E^{(m-q+1)}_{\lambda,{\rm cl}},\; 
\frac{1}{\sqrt{\lambda}}d^\dagger\star:\E^{(q)}_{\lambda,{\rm ccl}}\to\E^{(m-q-1)}_{\lambda,{\rm ccl}}.
\end{aligned}
\eeq

\subsection{Manifolds with boundary}
\label{bord}

Let $M$ be an orientable compact connected riemannian $n$-manifold with boundary $\b M$. 
Following \cite{RS}, let $\b_x$ denotes the outward pointing unit normal vector to the boundary, and $dx$ the corresponding one form. 
The smooth forms on $M$ near the boundary decompose as $\omega=\omega_{\rm tan}+\omega_{\rm norm}$, where $\omega_{\rm norm}$ is the orthogonal projection on the subspace generated by $dx$ and $\omega_{\rm tan}$ is in $\Omega(\b M)$. We  write $\omega=\omega_1+ dx \wedge\omega_{2}$, where $\omega_j\in \Omega(\b M)$, and
\beq\label{dec}
\star\omega_2=dx \wedge \star\omega.
\eeq

Define absolute boundary conditions by
\[
B_{\rm abs}(\omega)=\omega_{\rm norm}|_{\b M}=\omega_2|_{\b M}=0
\]
and relative boundary conditions by
\[
B_{\rm rel}(\omega)=\omega_{\rm tan}|_{\b M}=\omega_1|_{\b M}=0.
\]
Note that, if $\omega \in \Omega^{q}(M)$, then $B_{\rm abs}(\omega) = 0$ if and only if $B_{\rm rel} (\star\omega) = 0$,
$B_{\rm rel}(\omega) = 0$ implies $B_{\rm rel} (d\omega) = 0$, and  $B_{\rm abs}(\omega) = 0$ implies $B_{\rm
abs} (d^{\dag}\omega) = 0$. Let $\B(\omega)=B(\omega)\oplus B((d+d^\dagger)(\omega))$. Then the operator $\Delta=(d+d^\dagger)^2$ with boundary conditions $\B(\omega)=0$  is self adjoint, and if $\B(\omega)=0$, then $\Delta\omega=0$ if and only if $(d+d^\dagger)\omega=0$. Note  that $\B$ correspond to
\beq\label{abs}
\B_{\rm abs}(\omega)=0\hspace{20pt}{\rm if~ and~ only~ if}\hspace{20pt}\left\{\begin{array}{l}\omega_{\rm norm}|_{\b M}=0,\\
(d\omega)_{\rm norm}|_{\b M}=0,\\
       \end{array}
\right.
\eeq
\beq\label{rel}
\B_{\rm rel}(\omega)=0\hspace{20pt}{\rm if~ and~ only~ if}\hspace{20pt}\left\{\begin{array}{l}\omega_{\rm tan}|_{\b M}=0,\\
(d^\dagger\omega)_{\rm tan}|_{\b M}=0,\\
       \end{array}
\right.
\eeq


Let
\begin{align*}
\H_{\rm abs}^q(M,E_\rho)&=\{\omega\in\Omega^q(M,E_\rho)~|~\Delta^{(q)}\omega=0, B_{\rm abs}(\omega)=0\},\\
\H_{\rm rel}^q(M,E_\rho)&=\{\omega\in\Omega^q(M,E_\rho)~|~\Delta^{(q)}\omega=0, B_{\rm rel}(\omega)=0\},
\end{align*}
be the spaces of harmonic forms with boundary conditions. Then the Hodge decomposition reads
\begin{align*}
\Omega^{q}_{\rm abs}(M,E_\rho) &=  \H_{\rm abs}^q(M,E_\rho) \oplus d \Omega^{q-1}_{\rm abs}(M,E_\rho)\oplus
d^{\dagger}\Omega^{q+1}_{\rm abs}(M,E_\rho),\\
\Omega^{q}_{\rm rel}(M,E_\rho) &=  \H_{\rm rel}^q(M,E_\rho) \oplus d \Omega^{q-1}_{\rm rel}(M,E_\rho)\oplus
d^{\dagger}\Omega^{q+1}_{\rm rel}(M,E_\rho).
\end{align*}

\subsection{Analytic torsion}\label{AnaTS} The analytic torsion is defined starting with a manifold $(M,g)$ without bounda\-ry , as previously, with twisted coefficients in $E_\rho$. The 
operator $\Delta^{(q)}$ is symmetric, positive and has pure point spectrum. The zeta function of the Laplace operator $\Delta^{(q)}$ on $q$-forms in $\Omega^q(M,E_\rho)$ is defined by the meromorphic extension (analytic at $s=0$) of the series
\[
\zeta(s,\Delta^{(q)})=\sum_{\lambda\in \Sp_+ \Delta^{(q)}} \lambda^{-s},
\]
convergent for  $\Re(s)>\frac{n}{2}$, and where $\Sp_+$ denotes the positive part of the spectrum.  If $\b M = \emptyset$,   the analytic torsion of $(M,g)$ is 
\begin{equation}\label{analytic}
\log T((M,g);\rho)=\frac{1}{2}\sum_{q=1}^n (-1)^q q \zeta'(0,\Delta^{(q)}).
\end{equation}

If $M$ has a boundary, we denote by $T_{\rm abs}((M,g);\rho)$ the number defined by equation (\ref{analytic}) with $\Delta$ satisfying absolute BC, and by $T_{\rm rel}((M,g);\rho)$ the number defined by the same equation with $\Delta$ satisfying relative BC. 

\subsection{The Cheeger-M\"uller theorem for manifolds with boundary}
\label{cm}

Using recent  works of J. Br\"{u}ning and X. Ma \cite{BM1,BM2}, and classic the work of W. L\"{u}ck \cite{Luc},  the Cheeger-M\"{u}ller theorem for an oriented compact connected Riemannian $n$-manifold $(M,g)$ with boundary reads \cite[Theorem 3.4]{BM2}  (see \cite[Section 6]{HS1}  or \cite[Section 2.3]{HS2}  for details on our notation) 
\begin{align*}
\log T_{\rm abs}((M,g);\rho)&=\log\tau_{\rm R}((M,g);\rho)+\frac{{\rm rk}(\rho)}{4}\chi(\b M)\log 2+{\rm rk}(\rho)A_{\rm BM,abs}(\b M),\\
\log T_{\rm rel}((M,g);\rho)&=\log\tau_{\rm R}((M,\b M,g);\rho)+\frac{{\rm rk}(\rho)}{4}\chi(\b M)\log 2+{\rm rk}(\rho)A_{\rm BM,rel}(\b M),
\end{align*}
where $\rho$ is an orthogonal representation of the fundamental group, and where the boundary anomaly term of Br\"{u}ning and Ma is defined as follows. Using the notation of \cite{BM1} (see \cite[Section 2.2]{HS2} for more details) for $\Z/2$ graded algebras, we  identify an antisymmetric  endomorphism $\phi$ of a finite dimensional vector space $V$ (over a field of characteristic zero) with the element 
$\hat \phi=\frac{1}{2}\sum_{j,k=1}^n \langle\phi(v_j),v_k\rangle \hat v_j\wedge \hat v_k$,
of $\widehat{\Lambda^2 V}$. For the elements $\langle\phi(v_j),v_k\rangle$ are the entries of the tensor representing $\phi$ in the base $\{v_k\}$, and this is an antisymmetric matrix. Now assume that $r$ is an antisymmetric endomorphism of $V$ with values in $\Lambda^2 V$. Then, $(R_{jk}=\langle r(v_j),v_k\rangle)$ is a tensor of two forms in $\Lambda^2 V$. We extend the above construction identifying $R$ with the element
\[
\hat R=\frac{1}{2}\sum_{j,k=1}^n \langle r(v_j),v_k\rangle\wedge \hat v_j\wedge \hat v_k,
\]
of $\Lambda^2 V\wedge \widehat{\Lambda^2 V}$. This can be generalized to higher dimensions. In particular,
all the construction can be done taking the dual $V^*$ instead of $V$. Accordingly to \cite{BM1}, we  define the following forms (where $i:\b M\to M$ denotes the inclusion)
\begin{align*}
\mathcal{S}&=\frac{1}{2}\sum_{k=1}^{n-1}(i^*\omega-i^*\omega_0)_{0 k}\wedge\hat e^*_{k}\\
\widehat {i^*\Omega}&=
\frac{1}{2}\sum_{k,h=1}^{n-1}i^*\Omega_{k, h}\wedge\hat e^*_{k}\wedge  \hat e^*_{h},&
\hat{\Theta}&=\frac{1}{2}\sum_{k,h=1}^{n-1}\Theta_{k,h} \wedge \hat  e^*_{k}\wedge  \hat e^*_{h}.
\end{align*}

Here, $\omega$ and $\omega_0$  are the connection one forms associated to the metrics $g$ and $g_0$, respectively,  where $g_0$ is a suitable deformation of $g$ that is a product near the boundary. $\Omega$ is  the curvature two form  of $g$, $ \Theta$ is the curvature two form of the boundary (with the metric induced by the inclusion), and $\{e_k\}_{k=0}^{n-1}$ is an orthonormal base of $T M$ (with respect to the metric $g$). Then, setting 
\[ 
B=\frac{1}{2}\int_0^1\int^B
\e^{-\frac{1}{2}\hat{\Theta}-u^2 \mathcal{S}^2}\sum_{k=1}^\infty \frac{1}{\Gamma\left(\frac{k}{2}+1\right)}u^{k-1}
\mathcal{S}^k du, 
\]
the Anomaly Boundary term is
\[
A_{\rm BM,abs}(\b M)=(-1)^{n+1}A_{\rm BM,rel}(\b M)=\frac{1}{2}\int_{\b M} B.
\]

\section{The spectrum of the Laplacian on forms on the finite metric cone}
\label{Lap1}
\label{Lap1.1}

Let $(W, \tilde g)$ be an orientable  compact connected Riemannian manifold of finite dimension $m$ without boundary  and with Riemannian structure $\tilde g$. 
The {\it metric cone} $CW$ is the space $(0,+\infty)\times W$ with the metric 
\beq\label{g1}
g=dx\otimes dx+x^2 \tilde g.
\eeq
The {\it finite metric cone} is $C_{(0,l]} W = \{(x,p)\in CW\;|\; 0<x\leq l\}$ with the Riemannian metric $g$ and the {\it completed finite metric cone} over $W$ is the compact 
space $C_lW = \overline{ C_{(0,l]}(W)}$. The boundary of $C_l W$ is the subspace $\{l\}\times W$ of $C_l W$ which is isometric to $W$ with the metric $l^2\tilde g$. We 
will call $(W,\tilde g)$ the {\it section} of the cone and operations on the section  will be denoted with tilde. 

In \cite{Che0, Che2, Che3}, J. Cheeger extended all the Hodge theory and the Laplace operator for this spaces, in particular 
all results of section \ref{rim} are valid. Given a local coordinate system $y$ on $W$, then $(x,y)$ is a local coordinate system on the cone.
We present the explicit form of $\star$, $d^\dagger$ and $\Delta$. If $\omega\in \Omega^{q}(C_{(0,l]} W)$, set
\[
\omega(x,y)=f_1(x)\omega_1(y)+f_2(x)dx\wedge \omega_2(y),
\]
with smooth functions $f_1$ and $f_2$, and $\omega_j\in \Omega( W)$, then
\begin{align}
\label{f1}\star \omega(x,y)&= x^{m-2q+2} f_2(x)\tilde\star \omega_2(y)+(-1)^q x^{m-2q}f_1(x) dx\wedge\tilde\star \omega_1(y),
\end{align}
\beq\label{f2}
\begin{aligned}d \omega(x,y)   &= f_1(x)\tilde d \omega_1(y) + \b_x f_1(x) dx \wedge \omega_1(y) - f_2(x) dx \wedge d\omega_2(y),\\
d^\dagger \omega(x,y)&= x^{-2} f_1(x)\tilde d^{\dag}\omega_1 (y) -\left((m-2q+2)x^{-1}f_2(x) + \b_x f_2(x)\right)\omega_2(y)\\
&-x^{-2}f_2(x) dx \wedge \tilde d^{\dag} \omega_2(y),
\end{aligned}
\eeq

\beq\label{f3}
\begin{aligned}
\Delta\omega(x,y)&= \left(-\b_x^2 f_1(x) -(m-2q)x^{-1}\b_x f_1(x)\right)\omega_1(y) + x^{-2}f_1(x)\tilde
\Delta\omega_1(y)-2x^{-1}f_2(x)\tilde d\omega_2(y)\\
&+dx\wedge \left(x^{-2}f_2(x) \tilde\Delta\omega_2(y)+\omega_2(y)\left(-\b^2_x f_2(x) -(m-2q+2)x^{-1}\b_x f_2(x)\right.\right.\\
&\left.\left. + (m-2q+2)x^{-2}f_2(x) \right) -2x^{-3}f_1(x)\tilde d^{\dag}\omega_1(y)\right).
\end{aligned}
\eeq

%
\label{Lap1.3}

The  Laplace operator on forms on the space $C_lW$ was studied by \cite{BS2}. The definitions of this operator starts with the formal differential operator defined 
by equation \eqref{f3} acting on $\Omega^{q}_{\rm abs/rel}(C_{(0,l]} W)$ . This define a unique self adjoint semi bounded operator  with pure point spectrum $\Delta_{\rm abs/rel}$ acting on $L^2 (C_l W,\Omega^{(q)}C_lW)$, such that $\Delta_{\rm abs/rel} \omega=\mathcal{L}\omega$, if $\omega\in {\rm dom} \Delta_{\rm abs/rel}$. All the solutions of the eigenvalues equation for $\mathcal{L}$ is presented in \cite{Che2}. In particular, imposing the boundary conditions we obtain the spectrum of $\Delta_{\rm abs/rel}$. More precisely,
let $J_\nu$ be the Bessel function of index $\nu$. Define
\begin{align*}
\alpha_q &=\frac{1}{2}(1+2q-m),\;{\rm and}\;
\mu_{q,n} = \sqrt{\lambda_{q,n}+\alpha_q^2},
\end{align*} where $\lambda_{q,n}$ is the eigenvalue of a $q$ co-exact eigenform of $W$.

\begin{lem}\label{l3}
The positive part of the spectrum of the Laplace operator on forms on $C_lW$, with
absolute boundary conditions on $\b C_l W$ is:
\begin{align*}
\Sp_+ \Delta_{\rm abs}^{(q)} &= \left\{m_{{\rm cex},q,n} : \hat j^{2}_{\mu_{q,n},\alpha_q,k}/l^{2}\right\}_{n,k=1}^{\infty}
\cup
\left\{m_{{\rm cex},q-1,n} : \hat j^{2}_{\mu_{q-1,n},\alpha_{q-1},k}/l^{2}\right\}_{n,k=1}^{\infty} \\
&\cup \left\{m_{{\rm cex},q-1,n} : j^{2}_{\mu_{q-1,n},k}/l^{2}\right\}_{n,k=1}^{\infty} \cup \left\{m _{q-2,n} :
j^{2}_{\mu_{q-2,n},k}/l^{2}\right\}_{n,k=1}^{\infty} \\
&\cup \left\{m_{{\rm har},q,0}:\hat j^2_{|\alpha_q|,\alpha_q,k}/l^{2}\right\}_{k=1}^{\infty} \cup \left\{ m_{{\rm har},q-1,0}:\hat
j^2_{|\alpha_{q-1}|,\alpha_q,k}/l^{2}\right\}_{k=1}^{\infty}.
\end{align*}

With relative boundary conditions:
\begin{align*}
\Sp_+ \Delta^{(q)}_{\rm rel} &= \left\{m _{{\rm cex},q,n} : j^{-2s}_{\mu_{q,n},k}/l^{-2s}\right\}_{n,k=1}^{\infty} \cup
\left\{m _{{\rm cex},q-1,n} :j^{-2s}_{\mu_{q-1,n},k}/l^{-2s}\right\}_{n,k=1}^{\infty} \\
&\cup \left\{ m_{{\rm cex},q-1,n} : \hat j^{-2s}_{\mu_{q-1,n},-\alpha_{q-1},k}/l^{-2s}\right\}_{n,k=1}^{\infty} \cup
\left\{m _{{\rm cex},q-2,n} :
\hat j^{-2s}_{\mu_{q-1,n},-\alpha_{q-2},k}/l^{-2s}\right\}_{n,k=1}^{\infty} \\
&\cup \left\{m_{{\rm har},q}:j_{|\alpha_q|,k}/l^{-2s}\right\}_{k=1}^{\infty} \cup \left\{m_{{\rm har},q-1}:
j_{|\alpha_{q-1}|,k}/l^{-2s}\right\}_{k=1}^{\infty},
\end{align*}
where  the $j_{\mu,k}$ are the zeros of the Bessel function $J_{\mu}(x)$,  the $\hat j_{\mu,c,k}$ are the zeros of
the function $\hat J_{\mu,c}(x) = c J_\mu (x) + x J'_\mu(x)$,  $c\in \R$.
\end{lem}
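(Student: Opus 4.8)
The plan is to reduce the eigenvalue problem for the formal operator $\mathcal{L}$ of \eqref{f3} on $C_{(0,l]}W$ to a family of one-dimensional Bessel-type problems indexed by the eigenforms of the section $W$, and then to superimpose the boundary conditions \eqref{abs}–\eqref{rel}. First I would use the Hodge decomposition \eqref{hodge1} on $(W,\tilde g)$ together with the isometries \eqref{iso1}–\eqref{it} to write every $q$-form on the cone, after separation of variables $\omega(x,y)=f_1(x)\omega_1(y)+f_2(x)\,dx\wedge\omega_2(y)$, in terms of a basis of $W$ built from: harmonic forms, coexact forms, and their $\tilde d$- and $\tilde d^\dagger$-images. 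Feeding each such block into \eqref{f3} yields, for the radial factors, either an uncoupled equation of the form
\begin{align*}
-\b_x^2 f -(m-2q)x^{-1}\b_x f + x^{-2}\lambda_{q,n} f = \Lambda f,
\end{align*}
or a $2\times2$ coupled system mixing the $\omega_1$-component with the $d\omega_2$ (or $d^\dagger\omega_1$) component; in the coupled case one diagonalizes the $x$-independent matrix (the eigenvalues turn out to be governed by $\alpha_q$ and $\mu_{q,n}$) to again obtain decoupled radial equations. Each decoupled equation is, after the substitution $f(x)=x^{\alpha}g(cx\sqrt{\Lambda})$ with the appropriate shift $\alpha=\alpha_q$ (resp. $\alpha_{q-1}$, $\alpha_{q-2}$), the Bessel equation of index $\mu$, so the $L^2$-integrable solution near $x=0$ is $x^{-\alpha_q}J_{\mu_{q,n}}(\sqrt{\Lambda}\,x)$ up to normalization; this is exactly the content of the eigenfunction list in \cite{Che2}, which I would cite rather than rederive.

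Next I would impose the boundary condition at $x=l$. For absolute boundary conditions, \eqref{abs} requires $\omega_{\rm norm}|_{\b C_lW}=0$ and $(d\omega)_{\rm norm}|_{\b C_lW}=0$; reading off the normal components from \eqref{f1}–\eqref{f2}, these translate, block by block, into either a Dirichlet condition $f(l)=0$ — giving the zeros $j_{\mu,k}$ of $J_\mu$ — or a Robin-type condition $cJ_\mu(\sqrt\Lambda l)+\sqrt\Lambda l\,J_\mu'(\sqrt\Lambda l)=0$ with $c$ equal to the relevant $\alpha$, i.e. the zeros $\hat j_{\mu,c,k}$ of $\hat J_{\mu,c}(x)=cJ_\mu(x)+xJ_\mu'(x)$. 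Keeping track of which block produces which condition, and of the multiplicities $m_{{\rm cex},q,n}$, $m_{{\rm cex},q-1,n}$, $m_{{\rm cex},q-2,n}$ coming from the dimensions of the coexact eigenspaces $\E^{(q)}_{\lambda_{q,n},{\rm cex}}(W)$ of the section, and separately the harmonic contributions $m_{{\rm har},q,0}$, $m_{{\rm har},q-1,0}$ (which use the index $|\alpha_q|$, since for harmonic forms on the section $\lambda=0$ and $\mu=|\alpha_q|$), reproduces the six families listed for $\Sp_+\Delta_{\rm abs}^{(q)}$. The relative case is handled identically, either by repeating the computation with \eqref{rel} in place of \eqref{abs}, or — more economically — by invoking Hodge duality: $\star$ intertwines $\Delta^{(q)}_{\rm abs}$ with $\Delta^{(m+1-q)}_{\rm rel}$ on the $(m{+}1)$-dimensional cone, and under $q\mapsto m+1-q$ one has $\alpha_q\mapsto -\alpha_{q-1}$, which accounts for the sign changes $\alpha_{q-1}\to-\alpha_{q-1}$, $\alpha_{q-2}\to-\alpha_{q-2}$ appearing in the relative list; the notational powers $l^{-2s}$ there are a typographical artifact for $l^{2}$.

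The main obstacle is the bookkeeping in the coupled $2\times2$ blocks: one must verify that after diagonalization the off-diagonal coupling terms $-2x^{-1}f_2\tilde d\omega_2$ and $-2x^{-3}f_1\tilde d^\dagger\omega_1$ in \eqref{f3} produce exactly the Bessel index $\mu_{q,n}=\sqrt{\lambda_{q,n}+\alpha_q^2}$ (and not some neighbouring shift), and — more delicately — that imposing the two scalar conditions in \eqref{abs} (or \eqref{rel}) on the two-dimensional solution space of each coupled block yields precisely one Dirichlet-type and one Robin-type spectral condition, with no spurious or missing eigenvalues, and with the correct index $\mu_{q-1,n}$ on the "shifted" summands. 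This is a careful but essentially mechanical check; once it is in place, matching against \cite{Che2} and collecting terms gives the stated spectra. I would also note the boundary-regularity/self-adjointness point already recorded in Section \ref{Lap1.3} (citing \cite{BS2}): since $m\ge 1$ here and the cone is over a sphere, the relevant radial operators are in the limit-point case at $x=0$ for all the indices that occur, so no extra boundary condition at the cone tip is needed and the spectrum is exactly the union of the families above.
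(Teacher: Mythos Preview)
The paper does not actually prove this lemma here: its entire proof is the citation ``See \cite{HS2}''. Your proposal is a faithful outline of precisely the argument carried out in \cite{HS2} (which in turn follows Cheeger \cite{Che2}): separate variables on the cone, use the Hodge decomposition on the section to split into harmonic, exact/coexact blocks, reduce each block to a Bessel-type radial ODE with index $\mu_{q,n}=\sqrt{\lambda_{q,n}+\alpha_q^2}$, and then read off from \eqref{abs}--\eqref{rel} that the boundary condition at $x=l$ is either Dirichlet (zeros $j_{\mu,k}$) or the Robin condition $cJ_\mu+xJ_\mu'=0$ (zeros $\hat j_{\mu,c,k}$). Your duality remark for the relative case is also the shortcut used in \cite{HS2}. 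So methodologically there is nothing to compare: you are reconstructing the cited proof.

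One caveat on your last paragraph: the claim that ``the relevant radial operators are in the limit-point case at $x=0$ for all the indices that occur'' is not correct in the generality of the lemma (which is stated for an arbitrary section $W$, not only spheres), and is not even uniformly true for spheres. Whenever $\mu_{q,n}<1$ --- in particular for the harmonic pieces with $|\alpha_q|<1$, which certainly occurs near middle degree --- the radial Bessel operator is in the limit-circle case at the tip and a choice of ideal boundary condition is required; this is exactly the point handled in \cite{Che2,Che3} and \cite{BS2}, and it is what singles out the self-adjoint extension $\Delta_{\rm abs/rel}$ referred to in Section~\ref{Lap1.3}. Your sketch should therefore replace the limit-point assertion by an appeal to the Cheeger extension (equivalently, to the $L^2$ condition selecting $J_\mu$ over $J_{-\mu}$/$Y_\mu$ together with the ideal boundary condition when $0\le\mu<1$), rather than claiming no condition at the tip is needed.
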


\begin{proof}
See \cite{HS2}
\end{proof}
For the harmonic forms of $\Delta_{\rm abs/rel}$ we have,
%
%
%
%
%
%
\begin{lem}\label{l3bb} If $\dim W=2p-1$ is odd. Then
\begin{align*}
\H^q_{\rm abs}(C_l W)&=\begin{cases}\H^q(W), &0\leq q\leq p-1,\\
 \{0\}, & p\leq q\leq 2p.\end{cases}\\
\H^q_{\rm rel}(C_l W)&=\begin{cases} \{0\}, & \hspace{0pt}0\leq q\leq p-1,\\
\left\{x^{2\alpha_q-1}dx \wedge \varphi^{(q-1)}, \varphi^{(q-1)}\in \H^{q-1}(W)\right\}, &p\leq  q\leq 2p.
\end{cases}
\end{align*}
If $\dim W=2p$ is even. Then
\begin{align*}
\H^q_{\rm abs}(C_l W)&=\begin{cases}\H^q(W), &0\leq q\leq p,\\
 \{0\}, & p+1\leq q\leq 2p+1.\end{cases}\\
\H^q_{\rm rel}(C_l W)&=\begin{cases} \{0\}, & \hspace{17pt}0\leq q\leq p,\\
\left\{x^{2\alpha_q-1}dx \wedge \varphi^{(q-1)}, \varphi^{(q-1)}\in \H^{q-1}(W)\right\}, &p+1\leq  q\leq 2p+1.
\end{cases}
\end{align*}
\end{lem}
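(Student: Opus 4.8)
The plan is to compute the harmonic forms of $\Delta_{\rm abs/rel}$ on $C_l W$ directly from the eigenvalue analysis of the formal operator $\mathcal{L}$ given by \eqref{f3}, restricted to the kernel. First I would separate variables: decompose an $L^2$ form $\omega$ on $C_{(0,l]}W$ according to the Hodge decomposition \eqref{hodge1} of the section $W$, writing $\omega(x,y)=f_1(x)\omega_1(y)+f_2(x)\,dx\wedge\omega_2(y)$ where $\omega_1,\omega_2$ range over harmonic, coexact, and exact forms of the appropriate degrees on $W$. Since $\Delta\omega=0$ iff $(d+d^\dagger)\omega=0$ under the boundary conditions $\B$, one reduces to first-order ODE systems for the radial coefficients coupled to the degree on $W$. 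In the sector where $\omega_1,\omega_2$ are harmonic on $W$ (so $\tilde\Delta\omega_j=0$, $\tilde d\omega_j=\tilde d^\dagger\omega_j=0$), the equations in \eqref{f3} collapse to the Euler-type ODEs $-\b_x^2 f_1-(m-2q)x^{-1}\b_x f_1=0$ and a similar one for $f_2$, whose solutions are $f_1=$ const or $f_1=x^{m-2q+1}$, and $f_2=$ const or $f_2=x^{2q-m-1}$; in the coexact/exact sectors with $\lambda_{q,n}>0$ the solutions involve $x^{\pm\mu}$ with $\mu>0$, hence are never both $L^2$ near $x=0$ and regular, so they contribute nothing to the kernel.

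Next I would impose the $L^2$ condition near the cone tip $x=0$. Using the volume element $x^m\,dx\,d\widetilde{\rm vol}$ and the pointwise norm coming from \eqref{g1} — which scales the $\omega_1$-part by $x^{-2q}$ and the $dx\wedge\omega_2$-part by $x^{-2(q-1)}\cdot$(a factor from $dx$) — one checks exactly which of the two radial solutions in each sector lies in $L^2$. This is a short power-counting argument; the upshot is the familiar Cheeger dichotomy: on the harmonic sector of $\Omega^q$, the solution $f_1=$ const (giving a pullback of $\H^q(W)$) is $L^2$ precisely when $q<m/2$, i.e. $q\le p-1$ when $m=2p-1$ and $q\le p$ when $m=2p$, while for $q>m/2$ only the $dx\wedge x^{2\alpha_q-1}\varphi^{(q-1)}$-type solution survives, with $\varphi^{(q-1)}\in\H^{q-1}(W)$. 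The exponent $2\alpha_q-1 = 2q-m-1$ is exactly what appears in the statement, and one should double-check that $x^{2\alpha_q-1}dx\wedge\varphi$ is indeed closed and coclosed via \eqref{f2} (it is, since $\b_x f_1$-type terms vanish for the surviving component and $\varphi$ is harmonic on $W$).

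Finally I would impose the boundary conditions at $x=l$. For absolute BC \eqref{abs} one needs $\omega_{\rm norm}|_{\b}=0$ and $(d\omega)_{\rm norm}|_{\b}=0$; the first kills any $dx\wedge\omega_2$ component at $x=l$, which is automatically satisfied by the constant-$f_1$ pullback solutions and rules out the $dx\wedge x^{2\alpha_q-1}\varphi$ solutions, so $\H^q_{\rm abs}(C_lW)\cong\H^q(W)$ in the low-degree range and is $0$ otherwise. Dually, relative BC \eqref{rel} requires $\omega_{\rm tan}|_{\b}=0$, which kills the pullback solutions and retains exactly the $dx\wedge x^{2\alpha_q-1}\varphi^{(q-1)}$ forms in the high-degree range — consistent with Poincaré–Lefschetz duality $\star:\H^q_{\rm abs}\to\H^{n-q}_{\rm rel}$ applied to \eqref{f1}, which is a good consistency check to record. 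The main obstacle is the careful bookkeeping of the $L^2$-integrability thresholds together with the closed/coclosed conditions at the tip: one must make sure that in the borderline and near-borderline degrees no extra harmonic forms sneak in (Cheeger's ``ideal boundary conditions'' subtlety), and that the cutoff between $q\le p-1$ and $q\ge p$ (resp. $q\le p$ and $q\ge p+1$) is placed correctly according to the parity of $m$; everything else is routine ODE and power-counting. I would cite \cite{Che2} for the general solution of the eigenvalue equation and \cite{HS2} for the organization of the spectral data, and present the kernel computation as the specialization $\lambda=0$.
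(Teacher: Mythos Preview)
Your proposal is correct and follows essentially the same approach as the argument in \cite{HS2} that the paper cites; the paper itself gives no proof here, simply referring to \cite{HS2} for the odd case and remarking that the even case follows by the same argument. Your separation-of-variables/ODE/$L^2$-threshold analysis is precisely that underlying argument, spelled out in more detail than the present paper provides.
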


\begin{proof} See \cite{HS2} for the odd case. The even case follows by the same argument.

\end{proof}

\label{s2c}

Using the description of the spectrum of the Laplace operator on forms $\Delta_{\rm abs/rel}^{(q)}$ given in  the last section, we define the zeta function on $q$-forms as in Section \ref{AnaTS}, by
\[
\zeta(s,\Delta_{\rm abs/rel}^{(q)})=\sum_{\lambda\in \Sp_+\Delta_{\rm abs/rel}^{(q)}} \lambda^{-s},
\]
for $\Re(s)>\frac{m+1}{2}$. This function possibly have a simple pole in $s=0$, but A. Dar \cite{Dar} proved

\begin{theo} The {\it torsion zeta function} with absolute/relative boundary conditions, defined by
\[
t_{\rm abs/rel}(s)=\frac{1}{2}\sum_{q=1}^{m+1} (-1)^q q \zeta(s,\Delta_{\rm abs/rel}^{(q)}),
\] 
is regular in $s=0$.
\end{theo}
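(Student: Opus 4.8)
The plan is to reduce the claim to a statement about the pole of the individual zeta functions $\zeta(s,\Delta_{\rm abs/rel}^{(q)})$ at $s=0$, and to exploit the Poincar\'e--Lefschetz type dualities on the cone together with the explicit description of the spectrum from Lemma \ref{l3}. Recall that for a Laplace-type operator on an $(m+1)$-dimensional manifold the function $\zeta(s,\Delta^{(q)})$ is meromorphic with at worst a simple pole at $s=0$, with residue expressible through the local heat coefficient $a_{(m+1)/2}$ (plus, in the conical case, an extra contribution from the singular stratum). The residue at $s=0$ need not vanish term by term; what must be shown is that the alternating weighted sum
\[
\Ru_{s=0} t_{\rm abs/rel}(s)=\frac{1}{2}\sum_{q=1}^{m+1}(-1)^q q\,\Ru_{s=0}\zeta(s,\Delta_{\rm abs/rel}^{(q)})
\]
vanishes.

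First I would split the spectrum as in Lemma \ref{l3}: each $\zeta(s,\Delta_{\rm abs}^{(q)})$ decomposes into Bessel-zeta sub-series indexed by coexact eigenvalues $\lambda_{q,n}$, $\lambda_{q-1,n}$, $\lambda_{q-2,n}$ of the section, plus the two harmonic contributions. Using the isometries \eqref{iso1} and \eqref{it}, the coexact eigenvalues on $W$ in degrees $q$ and $m-1-q$ are paired, so the non-harmonic Bessel-zeta functions reorganize into "duality pairs" whose parameters $\mu_{q,n}$ are equal. Then I would insert this decomposition into $t_{\rm abs/rel}(s)$ and collect, for each fixed coexact eigenvalue $\lambda$ of the section occurring in some degree, the coefficient of the corresponding Bessel-zeta function $z(s,\mu,c)$ (or $z(s,\mu)$ for zeros of $J_\mu$). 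The combinatorial bookkeeping of the factors $(-1)^q q$ over the three shifts $q,q-1,q-2$ produces, after a short manipulation (telescoping in $q$), a coefficient that is \emph{independent of the degree} and in fact reduces to something proportional to the Euler-characteristic weights. The key residue input is that a single Bessel-zeta function of the form $\zeta(s)=\sum_k j_{\mu,k}^{-2s}$ (and its $\hat J_{\mu,c}$-variant) has a pole at $s=0$ whose residue is a \emph{polynomial in $\mu$ of degree one} with the leading term independent of $c$; this is classical (e.g. from the asymptotics of the associated $\Gamma$-function quotients, cf. the treatment in \cite{HS2}). Because the residue is affine in $\mu_{q,n}=\sqrt{\lambda_{q,n}+\alpha_q^2}$ only through $\mu^2=\lambda+\alpha_q^2$ at the relevant order, the $n$-dependence enters only via $\sum_n$ of a $\mu_{q,n}$-polynomial, which is itself a combination of zeta functions of the section $\Delta^{(q)}_{\rm ccl}$ evaluated at special arguments.

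Next I would assemble the total residue as a finite sum over degrees $q$ of the section, of (degree-combinatorial coefficient) times (residue-polynomial evaluated on the section spectrum). Here I would invoke the fact that the section $W$ is a \emph{closed} manifold, so the analogue of Dar's vanishing (or directly the Poincar\'e duality $\star$ together with $\zeta(s,\Delta^{(q)})=\zeta(s,\Delta^{(m-q)})$) forces the alternating weighted sums of the section zeta values that appear to cancel in pairs $q\leftrightarrow m-q$, up to harmonic terms; the harmonic contributions on the cone, listed in Lemma \ref{l3bb}, are themselves Bessel-zeta functions with $\mu=|\alpha_q|$ and enter with coefficients that similarly telescope. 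The upshot is that every surviving piece either cancels against its Poincar\'e dual partner or is killed by the explicit algebraic identity satisfied by the residue polynomials at the special values $\mu=|\alpha_q|$.

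The main obstacle I anticipate is the precise matching of boundary-condition data across the duality: with absolute conditions one gets zeros of $\hat J_{\mu,\alpha_q}$, with relative conditions zeros of $J_\mu$ or of $\hat J_{\mu,-\alpha_q}$, and the residue of the associated zeta function depends on the shift parameter $c$ at the \emph{subleading} order. So the cancellation is not purely formal: one must check that the $c$-dependent parts of the residues, weighted by $(-1)^q q$ and summed over the three index shifts, still cancel. Controlling this requires the explicit formula for $\Ru_{s=0}\zeta(s,\hat J_{\mu,c})$ as a polynomial in $\mu$ with $c$-dependent coefficients (available from the Kirsten--Loya--Park type analysis used in \cite{HS2}), and then a finite but somewhat delicate verification that the total $c$-dependent residue is a perfect difference in $q$ and hence telescopes to zero. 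Once that identity is in hand, regularity at $s=0$ follows; I would present the argument uniformly, treating $\dim W$ odd and even in parallel, with the harmonic-form bookkeeping of Lemma \ref{l3bb} plugged in at the end.
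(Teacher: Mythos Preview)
The paper does not prove this theorem at all: it simply attributes the result to A.~Dar \cite{Dar} and moves on. So there is no ``paper's own proof'' to compare your argument against; the paper treats regularity of $t_{\rm abs/rel}(s)$ at $s=0$ as a black box imported from the literature.

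Your sketch is therefore going well beyond what the paper does, and its overall shape---decompose via Lemma~\ref{l3}, analyze the residues of the Bessel-type zeta functions, and show that the alternating weighted combination telescopes/cancels using the dualities on the section---is in the spirit of Dar's original argument. That said, several of your intermediate claims are imprecise as stated. The assertion that the residue at $s=0$ of $\sum_k j_{\mu,k}^{-2s}$ (or of the $\hat J_{\mu,c}$ variant) is ``a polynomial in $\mu$ of degree one'' with leading term independent of $c$ needs justification; the actual residue/heat-coefficient structure for these Bessel zeta functions is more subtle and is exactly where the possible pole at $s=0$ originates in the conical setting. Likewise, the passage ``the residue is affine in $\mu_{q,n}$ only through $\mu^2=\lambda+\alpha_q^2$ at the relevant order'' conflates two different expansions and would need to be made precise before the telescoping argument can go through. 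Finally, the last step---checking that the $c$-dependent subleading residues form a perfect difference in $q$---is the heart of the matter, and you acknowledge it only as ``a finite but somewhat delicate verification''; in an actual proof this is the nontrivial computation, not a remark. If you intend to supply a self-contained proof rather than cite \cite{Dar}, that verification must be carried out explicitly.
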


Then the analytic torsion of $C_l W$ is defined and 
\[
\log T_{\rm abs/rel}(C_l W)=t_{\rm abs/rel}'(0).
\]

\section{The analytic torsion of $C_l W$}
\label{s5}

In this section we present all principal facts about the calculation of the Analytic torsion of $C_l W$. For more details see \cite{HS2}. As the Poincar\'e Duality holds for
the Analytic torsion of $C_l W$, i.e,
\[
\log T_{\rm abs} (C_l W) = (-1)^{\dim W} \log T_{\rm rel} (C_l W),
\]  for now on we use the absolute boundary conditions and we will omit the subscript ${\rm abs}$.
With lemma \ref{l3}, after some simplification, the torsion zeta function is
\begin{align*}
t(s)=& \frac{l^{2s}}{2} \sum^{p-2}_{q=0} (-1)^q \left(\sum^{\infty}_{n,k=1} m_{{\rm cex},q,n}\left(2j^{-2s}_{\mu_{q,n},k} - \hat j^{-2s}_{\mu_{q,n},\alpha_q,k}-\hat j^{-2s}_{\mu_{q,n},-\alpha_q,k}\right)
\right) \\
&+(-1)^{p-1}\frac{l^{2s}}{2}\left(\sum^{\infty}_{n,k=1} m_{{\rm cex},p-1,n}\left(j^{-2s}_{\mu_{p-1,n},k} -
(j'_{\mu_{p-1,n},k})^{-2s}\right)\right)\\
&- \frac{l^{2s}}{2} \sum_{q=0}^{p-1} (-1)^{q} {\rm rk}\H_q(\b C_lW;\Q) \sum_{k=1}^{\infty} \left(j^{-2s}_{-\alpha_{q-1},k} - j^{-2s}_{-\alpha_{q},k}\right).
\end{align*} when $\dim W = 2p-1$ is odd and
\begin{align*}
t(s)=& \frac{l^{2s}}{2} \sum^{p-1}_{q=0} (-1)^q \sum^{\infty}_{n,k=1} m_{{\rm cex},q,n}\left( \hat j^{-2s}_{\mu_{q,n},-\alpha_q,k}-\hat j^{-2s}_{\mu_{q,n},\alpha_q,k}\right)\\
&+ \frac{l^{2s}}{2} \sum_{q=0}^{p-1} (-1)^{q+1} {\rm rk}\H_q(\b C_lW;\Q) \sum_{k=1}^{\infty} \left(j^{-2s}_{-\alpha_{q-1},k} + j^{-2s}_{-\alpha_{q},k}\right)\\ &+ (-1)^{p+1} \frac{l^{2s}}{4}\sum_{k=1}^{\infty} {\rm rk}\H_p(\b C_lW;\Q)\left(j_{\frac{1}{2},k}^{-2s}+j_{-\frac{1}{2},k}^{-2s}\right).
\end{align*} when $\dim W = 2p$ is even.


So the Analytic torsion of $C_l W$ is described by the following two theorems. For the proof of Theorem \ref{odd case} see \cite{HS2} and for the 
Theorem \ref{even case} see \cite{HS3}(compare with \cite{Ver1})

\begin{theo}\label{odd case}
If dimension of $W$ is odd and equal to $2p-1 (p\geq 1)$ then the Analytic torsion of $C_l W$ is
\begin{align*}
\log T (C_l W) &= \frac{1}{2} \log T(W,l^2\tilde g)+
\frac{1}{2} \sum_{q=0}^{p-1} (-1)^{q} r_q\log \frac{l}{2(p-q)}\\
&+ \frac{1}{2}\sum_{q=0}^{p-1} (-1)^q \sum_{j=1}^{p-1}\Rz_{s=0}\Phi^{\rm odd}_{2j+1,q}(s)\Ru_{s=j+\frac{1}{2}}\zeta_{\rm cex}\left(s,\tilde\Delta^{(q)}+\alpha_q^2\right)
\end{align*}
where the functions $\Phi^{\rm odd}_{2j+1,q}(s)$ are some universal functions explicitly known by some recursive relations, and $\tilde\Delta$ is the Laplace operator on forms on the section of the cone.
\end{theo}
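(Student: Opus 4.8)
\textbf{Proof proposal for Theorem \ref{odd case}.}

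The plan is to compute the derivative at $s=0$ of the torsion zeta function $t(s)$ displayed just above the theorem, by analyzing each of its three pieces separately. The central tool is the decomposition of a Bessel-zeta type function into a ``regular'' part coming from the large-index asymptotics and a ``geometric'' part that survives in the limit; this is the standard machinery for the analytic torsion of cones (as developed in \cite{HS2}), and I would organize the argument around it. First I would recall, for fixed order $\nu$, the expansions of $\sum_k j_{\nu,k}^{-2s}$ and $\sum_k \hat\jmath_{\nu,c,k}^{-2s}$ obtained from the uniform asymptotic expansion of $J_\nu$ and of $c J_\nu + x J_\nu'$; these produce, after summing over the spectrum $\lambda_{q,n}$ of the section via $\zeta_{\rm cex}(s,\tilde\Delta^{(q)}+\alpha_q^2)$, a sum of three kinds of contributions: (i) a part proportional to $\log T(W,l^2\tilde g)$ coming from the leading behavior, (ii) a part proportional to the Betti numbers $r_q$ of the section (equivalently $\mathrm{rk}\,\H_q(\b C_lW)$), giving the $\log\frac{l}{2(p-q)}$ terms through the zeros $j_{-\alpha_q,k}$ of Bessel functions at half-integer order, and (iii) the residual terms built from the universal functions $\Phi^{\rm odd}_{2j+1,q}(s)$.

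Concretely, the key steps in order are: (1) substitute the spectrum from Lemma \ref{l3} into $t(s)$ and regroup so that every summand is of the form $\sum_{n,k} m_{{\rm cex},q,n}\, z_{\nu(q,n),k}^{-2s}$ for $z$ either a Bessel zero or a $\hat\jmath$-zero; (2) for each such inner sum over $k$, use the contour-integral representation and the uniform large-$\nu$ asymptotics to write it as $\zeta(2s,\tilde\Delta^{(q)}_{\rm cex}+\alpha_q^2)$ times an explicit factor plus finitely many ``polynomial in $\mu_{q,n}$'' corrections whose coefficients are the $\Phi_{2j+1,q}$; (3) resum these corrections over $n$, which converts a polynomial term $\mu_{q,n}^{2j+1}$ into the value (or residue) of $\zeta_{\rm cex}(s,\tilde\Delta^{(q)}+\alpha_q^2)$ at $s=j+\tfrac12$ — here is where the residue $\Ru_{s=j+\frac12}\zeta_{\rm cex}$ enters and where one must track carefully the pole structure, so that differentiating at $s=0$ picks out precisely the product $\Rz_{s=0}\Phi^{\rm odd}_{2j+1,q}(s)\cdot\Ru_{s=j+\frac12}\zeta_{\rm cex}$; (4) identify the leading term with $\tfrac12\log T(W,l^2\tilde g)$ using the definition \eqref{analytic} of the analytic torsion of the closed section together with the alternating sum over $q$ and the isometries \eqref{it} relating coexact pieces in complementary degrees; (5) collect the half-integer-order contributions, where the explicit zeros and the known special values of $\zeta$-functions of $J_{\pm 1/2}$ produce the clean closed form $\tfrac12\sum_q(-1)^q r_q\log\frac{l}{2(p-q)}$; and (6) check that the contributions of the harmonic pieces listed in Lemma \ref{l3bb} and the duality $\log T_{\rm abs}=(-1)^{\dim W}\log T_{\rm rel}$ are consistent with the stated formula, absorbing any $l^{2s}$ prefactors, which only shift by multiples of $\log l$.

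The main obstacle, I expect, is step (3) together with the bookkeeping in step (2): one has to perform the uniform asymptotic expansion to high enough order that all potentially divergent pieces are extracted, show that the remaining tails are holomorphic near $s=0$ and contribute nothing to $t'(0)$, and then match the finitely many surviving terms with the recursively-defined universal functions $\Phi^{\rm odd}_{2j+1,q}(s)$ — in particular verifying that their residues/finite parts at $s=0$ are exactly what multiply $\Ru_{s=j+\frac12}\zeta_{\rm cex}(s,\tilde\Delta^{(q)}+\alpha_q^2)$. A secondary difficulty is the careful handling of the ``extra'' Bessel-zero terms $j'_{\mu_{p-1,n},k}$ appearing in the middle degree $q=p-1$ of $t(s)$: these must be expanded on the same footing as the $\hat\jmath$-zeros (they are the $c=0$ case of $\hat\jmath_{\mu,c,k}$ after shifting, or equivalently zeros of $J_\mu'$), and their contribution has to be shown to combine with the others without producing spurious terms. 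Everything else — the alternating-sum cancellations, the Poincaré duality reduction, and the evaluation of half-integer Bessel data — is routine once the analytic framework of \cite{HS2} is in place, so I would cite that reference for the detailed computations and focus the written proof on the structure just outlined.
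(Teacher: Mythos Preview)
Your proposal is correct and follows the same route as the paper: the paper does not give an independent argument for Theorem \ref{odd case} but simply refers to \cite{HS2}, and your outline (Bessel-zero zeta functions via uniform asymptotics, regrouping into the section torsion, the Betti-number term, and the residual $\Phi^{\rm odd}_{2j+1,q}$--residue pairing) is exactly the structure of that reference. In short, you have sketched the proof that the paper cites rather than proves, so there is nothing to correct or contrast.
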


\begin{theo}\label{even case}
If dimension of $W$ is even and equal to $2p(p\geq 1)$ then the Analytic torsion of $C_l W$ is
\begin{align*}
\log T (C_l W)& = \sum_{q=0}^{p-1} (-1)^{q} \frac{r_q}{2}  \log \frac{l^{2p-2q+1}}{2p-2q+1}+ (-1)^p\frac{r_p}{4}\log l+ \frac{1}{2} \chi(W) \log2+\frac{1}{2}\sum^{p-1}_{q=0}(-1)^{q+1} \A_{0,0,q}(0)\\
+&\sum_{q=0}^{p-1} (-1)^{q+1}r_q \log(2p-2q-1)!!
+ \frac{1}{2}\sum_{q=0}^{p-1} (-1)^q \sum_{j=1}^{p}\Rz_{s=0}\Phi^{\rm even}_{2j,q}(s)\Ru_{s=j}\zeta_{\rm cex}\left(s,\tilde\Delta^{(q)}+\alpha_q^2\right),
\end{align*}
where the functions $\Phi^{\rm even}_{2j,q}(s)$ are some universal functions explicitly known by some recursive relations, $\tilde\Delta$ is the Laplace operator on forms on the section of the cone and 
\[
\A_{0,0,q}(s)= \sum_{n=1}^{\infty}
\left(\log\left(1-\frac{\alpha_q}{\mu_{q,n}}\right) - \log\left(1+\frac{\alpha_q}{\mu_{q,n}}\right)\right)\frac{m_{q,n}}{\mu_{q,n}^{2s}}.
\]
\end{theo}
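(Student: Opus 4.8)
The plan is to compute $\log T(C_lW)=t'(0)$ directly from the explicit even-dimensional torsion zeta function $t(s)$ displayed above, mirroring the odd-case computation of \cite{HS2} and adapting it to $\dim W=2p$. By the stated Poincaré duality we may work throughout with absolute boundary conditions. First I would split $t(s)$ into its \emph{coexact part}
\[
t_{\rm cex}(s)=\frac{l^{2s}}{2}\sum_{q=0}^{p-1}(-1)^q\sum_{n,k=1}^\infty m_{{\rm cex},q,n}\left(\hat j^{-2s}_{\mu_{q,n},-\alpha_q,k}-\hat j^{-2s}_{\mu_{q,n},\alpha_q,k}\right),
\]
and its \emph{harmonic part}, consisting of the remaining two lines built from the zeros $j_{-\alpha_{q-1},k}$, $j_{-\alpha_q,k}$ and $j_{\pm\frac12,k}$ weighted by the Betti numbers $r_q={\rm rk}\,\H_q(\b C_lW;\Q)$. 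These two parts are handled by different tools, and summing their derivatives at $s=0$ produces the two groups of terms in the statement.

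For the coexact part I would use Spreafico's formalism for zeta functions of double sequences of spectral type. For fixed $q,n$ the inner sum over $k$ is the zeta function of the zeros of $\hat J_{\mu_{q,n},c}(z)=cJ_{\mu_{q,n}}(z)+zJ'_{\mu_{q,n}}(z)$ with $c=\pm\alpha_q$; I would represent it through the Hadamard product of the associated modified function $\hat I_{\mu,c}(z)=cI_\mu(z)+zI'_\mu(z)$, obtaining a Mellin--Barnes integral whose integrand is $\log\hat I_{\mu,c}$. Inserting the uniform Olver--Debye asymptotic expansion of $I_\mu(\mu z)$ and $I'_\mu(\mu z)$ as $\mu\to\infty$ turns the double sum into a finite sum indexed by the Debye polynomials; these are precisely the \emph{universal functions} $\Phi^{\rm even}_{2j,q}(s)$, given by the same recursion as in the odd case but retaining the even-order contributions. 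The analytic continuation then shows that each such term carries a pole at $s=0$ matched to a pole of the coexact spectral zeta function of the section at $s=j$, so that after differentiating the surviving contribution is exactly
\[
\frac12\sum_{q=0}^{p-1}(-1)^q\sum_{j=1}^p\Rz_{s=0}\Phi^{\rm even}_{2j,q}(s)\,\Ru_{s=j}\zeta_{\rm cex}\!\left(s,\tilde\Delta^{(q)}+\alpha_q^2\right).
\]

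The remainder of the coexact part, i.e.\ the piece not absorbed by the finitely many Debye terms, is absolutely convergent at $s=0$. Here the difference $\hat I_{\mu,-\alpha_q}-\hat I_{\mu,\alpha_q}$ behaves at leading order like $\log(1-\alpha_q/\mu_{q,n})-\log(1+\alpha_q/\mu_{q,n})$, so summing over $n$ with multiplicity $m_{q,n}$ reconstitutes $\A_{0,0,q}(0)$, giving $\tfrac12\sum_q(-1)^{q+1}\A_{0,0,q}(0)$. The harmonic part I would treat separately: since $\dim W=2p$ is even, $\alpha_q=q-p+\tfrac12$ is a half-integer, hence the orders $-\alpha_q=p-q-\tfrac12$ are half-integers, for which the Bessel functions are spherical and the zeta functions over their zeros admit closed-form values and derivatives at $s=0$ (elementary when the order is $\pm\tfrac12$, where $j_{1/2,k}=k\pi$ and $j_{-1/2,k}=(k-\tfrac12)\pi$, and obtained from the spherical-Bessel recursion in general). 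After reindexing the two sums over $q$ against the shifted orders $-\alpha_{q-1},-\alpha_q$, evaluating these zeta functions reproduces the logarithmic terms $\sum_q(-1)^q\tfrac{r_q}{2}\log\frac{l^{2p-2q+1}}{2p-2q+1}$, the endpoint term $(-1)^p\tfrac{r_p}{4}\log l$ from the $j_{\pm1/2}$ contribution, and the double-factorial terms $\sum_q(-1)^{q+1}r_q\log(2p-2q-1)!!$, while collecting the $\log2$ factors arising from the half-integer shifts reconstitutes $\tfrac12\chi(W)\log2$.

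Finally I would collect all contributions. Dar's theorem guarantees that $t(s)$ is regular at $s=0$, which serves as an internal check that the polar parts generated by the Debye and harmonic computations cancel, leaving the stated finite formula. The main obstacle is the coexact analysis: one must rigorously justify interchanging the asymptotic expansion with the Mellin integral and the $n$-summation, identify the even-order Debye contributions with the recursively defined $\Phi^{\rm even}_{2j,q}$, and verify that the resulting poles pair with $\Ru_{s=j}\zeta_{\rm cex}(s,\tilde\Delta^{(q)}+\alpha_q^2)$ at $s=j$ rather than at spurious locations. Keeping careful track of the shift by $\alpha_q^2$, which is where the half-integer parity of the even case enters, and of the normalisation that cleanly isolates the $\A_{0,0,q}(0)$ remainder, is the delicate bookkeeping that distinguishes this case from, though parallels, the odd one.
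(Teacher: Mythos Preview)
The paper does not actually prove this theorem in the text; immediately before the statement it says ``for the Theorem \ref{even case} see \cite{HS3} (compare with \cite{Ver1})'', so the in-paper proof is a bare citation. Your outline is exactly the even-dimensional adaptation of the \cite{HS2} argument that \cite{HS3} carries out: the same split of $t(s)$ into a coexact double-sum and a harmonic Bessel-zeta part, the same Spreafico double-sequence machinery with Hadamard products and Olver--Debye uniform asymptotics to isolate the $\Phi^{\rm even}_{2j,q}$ contributions paired with $\Ru_{s=j}\zeta_{\rm cex}$, the identification of the convergent remainder with $\A_{0,0,q}(0)$, and explicit evaluation of the half-integer-order Bessel zeta functions (since $\alpha_q\in\tfrac12+\Z$ when $\dim W=2p$) to produce the $\log l$, double-factorial, and $\chi(W)\log 2$ terms. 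So your approach coincides with the cited one; the only inaccuracy is the phrase ``the difference $\hat I_{\mu,-\alpha_q}-\hat I_{\mu,\alpha_q}$'', where you mean the difference of the corresponding $\log\hat I$ terms in the Mellin integrand.
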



\section{The proof of Theorem \ref{t02} and Theorem \ref{teven}}
\label{s6}

In order to prove Theorem \ref{t02} and Theorem \ref{teven} we define,
\begin{defi}
The Anomaly Boundary contribution in the analytic torsion of a cone over a closed manifold $W$, denoted by $\log T_{\rm AB}(C_l W)$, is 
\[
\frac{1}{2}\sum_{q=0}^{p-1} (-1)^q \sum_{j=1}^{p-1}\Rz_{s=0}\Phi^{\rm odd}_{2j+1}(s)\Ru_{s=j+\frac{1}{2}}\zeta_{\rm cex}\left(s,\tilde\Delta^{(q)}+\alpha_q^2\right),
\] if $\dim W =2p-1$ and 
\[
\frac{1}{2}\sum_{q=0}^{p-1} (-1)^q \sum_{j=1}^{p}\Rz_{s=0}\Phi^{\rm even}_{2j}(s)\Ru_{s=j}\zeta_{\rm cex}\left(s,\tilde\Delta^{(q)}+\alpha_q^2\right),
\] if $\dim W = 2p$.
\end{defi}

Recall that we are considering the absolute BC case, we will calculate the analytic torsion of $C_l W$ in the case $W=S^{2p-1}_{\sin\alpha}$ using the theorem \ref{odd case} and the Anomaly Boundary contribution in the $\log T(C_l S^{2p}_{\sin\alpha})$. Our strategy is by direct calculation, i.e, we will determine all terms necessary for the proof of theorem \ref{t02} and \ref{teven}. With this in mind, first we determine the term $\log T(S^{2p-1}_{\sin\alpha}, l^2\tilde g)$ and then the Anomaly Boundary contribution, which requires more work, and that will be developed in the following subsections. In fact, the Anomaly Boundary contribution are similar in dimension odd and dimension even. So, we will determine the odd case and present the equations for the even case to be concise. Here we present the underlying geometric setting. Let $S^m_b$ be the sphere of radius  $b>0$ in $\R^{m+1}$, $S^{m}_b=\{x\in\R^{m+1}~|~|x|=b\}$ (we simply write $S^m$
for $S^m_1$). Let  $C_l S^m_{\sin\alpha}$ denotes the cone of angle $\alpha$ over $S^m_{\sin\alpha}$ in $\R^{m+2}$. We embed $C_l S^m_{\sin\alpha}$ in $\R^{m+2}$ as the subset of the segments joining the origin to the sphere $S^m_{l\sin\alpha}\times \{(0,\dots,0,l\cos\alpha)\}$. We parametrize the cone by
\begin{equation*}\label{}C_{l}S_{\sin\alpha}^{m}=\left\{
\begin{array}{rcl}
x_1&=&r \sin{\alpha} \sin{\theta_m}\sin{\theta_{m-1}}\cdots\sin{\theta_3}\sin{\theta_2}\cos{\theta_1} \\[8pt]
x_2&=&r \sin{\alpha} \sin{\theta_m}\sin{\theta_{m-1}}\cdots\sin{\theta_3}\sin{\theta_2}\sin{\theta_1} \\[8pt]
x_3&=&r \sin{\alpha} \sin{\theta_m}\sin{\theta_{m-1}}\cdots\sin{\theta_3}\cos{\theta_2} \\[8pt]
&\vdots& \\
x_{m+1}&=&r \sin{\alpha} \cos{\theta_m} \\[8pt]
x_{m+2}&=&r \cos{\alpha}
\end{array}
\right.
\end{equation*}
with $r \in [0,l]$, $\theta_1 \in [0,2\pi]$, $\theta_2,\ldots,\theta_m \in [0,\pi]$, and where  $\alpha$ is a fixed positive real number and  $0<\frac{1}{\nu}= \sin{\alpha}\leq 1$. The induced metric is  ($r>0$)
\begin{align*}
g_E &=dr \otimes dr + r^2  g_{S^{m}_{\sin\alpha}}\\
&= dr\otimes dr + r^2  \sin^2\alpha\left(\sum^{m-1}_{i=1} \left(\prod^{m}_{j=i+1} \sin^2{\theta_j}\right) d\theta_i \otimes
d\theta_i + d\theta_m \otimes d\theta_m\right),
\end{align*}
and $\sqrt{|\det
g_E|}=(r\sin\alpha)^{m}(\sin\theta_m)^{m-1}(\sin\theta_{m-1})^{m-2}\cdots(\sin\theta_3)^{2}(\sin\theta_2)$.

\subsection{The Analytic torsion of an odd dimensional sphere}

\begin{prop}
\[
\log T(S^{2p-1}_{\sin\alpha},l^2\tilde g)=\log {\rm Vol} (C_l S^{2p-1}_{\sin\alpha})-
\sum_{q=0}^{p-1} (-1)^{q} r_q\log \frac{l}{2(p-q)}.
\]
\end{prop}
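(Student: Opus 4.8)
The plan is to compute both sides of the claimed identity explicitly using the known analytic torsion of the round sphere together with the Poincar\'e duality / scaling properties recorded earlier, and then to match the result against the formula in Theorem \ref{odd case}. First I would recall that $S^{2p-1}_{\sin\alpha}$ is, up to the overall scaling factor $\sin^2\alpha$, the round sphere $S^{2p-1}$; thus $(S^{2p-1}_{\sin\alpha}, l^2\tilde g)$ is isometric to $S^{2p-1}$ with a metric scaled by $(l\sin\alpha)^2$. Since $S^{2p-1}$ is odd-dimensional and $\rho$ is trivial, $\log T(S^{2p-1}) = 0$ by Poincar\'e duality for the analytic torsion on a closed odd-dimensional manifold (the contributions from $q$ and $2p-1-q$ cancel; equivalently $\log T((M,g);\rho) = (-1)^{\dim M+1}\log T((M,g);\rho)$ forces it to vanish). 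So the only content is the behaviour under rescaling the metric.

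Next I would invoke the standard scaling (anomaly) formula for the analytic torsion: if $g \mapsto \lambda^2 g$ on a closed manifold $N^d$, then $\zeta'(0,\Delta^{(q)})$ changes by a term linear in $\log\lambda$ whose coefficient is $-\log\lambda$ times (twice) the relevant heat-coefficient / dimension-of-harmonic-forms data, so that
\[
\log T((N,\lambda^2 g);\rho) - \log T((N,g);\rho) = \log\lambda \sum_{q=0}^{d}(-1)^q q\left(\dim\H^q(N;E_\rho) - \tfrac12 \dim N \cdot (\text{something})\right),
\]
but on an odd-dimensional manifold this reduces to $\log\lambda$ times $\sum_q (-1)^q q\, r_q$ with $r_q = \dim\H^q(W)$ (all the $a_{d/2}$-type terms vanish in odd dimensions). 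Here $r_q = \mathrm{rk}\,\H_q(S^{2p-1};\Q)$ equals $1$ for $q=0$ and $q=2p-1$ and $0$ otherwise, and one has to be careful that in Theorem \ref{odd case} the $r_q$ appearing in the sum $\sum_{q=0}^{p-1}(-1)^q r_q\log\frac{l}{2(p-q)}$ are precisely the Betti numbers of the section below the middle dimension. Applying this with $\lambda = l\sin\alpha$ gives $\log T(S^{2p-1}_{\sin\alpha}, l^2\tilde g) = \log(l\sin\alpha)\sum_{q=0}^{2p-1}(-1)^q q\, r_q$, and with only $r_0 = r_{2p-1} = 1$ nonzero this collapses to a single explicit term.

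Then I would compute $\log\mathrm{Vol}(C_l S^{2p-1}_{\sin\alpha})$ directly from the metric $g_E = dr\otimes dr + r^2 g_{S^{m}_{\sin\alpha}}$ displayed above: integrating $\sqrt{|\det g_E|} = (r\sin\alpha)^{m}(\sin\theta_m)^{m-1}\cdots(\sin\theta_2)$ over $r\in[0,l]$ and over the sphere gives $\mathrm{Vol}(C_l S^{2p-1}_{\sin\alpha}) = \frac{l^{m+1}\sin^m\alpha}{m+1}\,\mathrm{Vol}(S^{m})$ with $m = 2p-1$, and likewise $\mathrm{Vol}(S^{2p-1}_{\sin\alpha}) = \sin^{2p-1}\alpha\,\mathrm{Vol}(S^{2p-1})$, so that $\mathrm{Vol}(C_l S^{2p-1}_{\sin\alpha}) = \frac{l}{2p}\,\mathrm{Vol}(S^{2p-1}_{l\sin\alpha})$ up to elementary factors. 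Finally I would assemble the three pieces: rearranging the claimed identity, it suffices to show $\log T(S^{2p-1}_{\sin\alpha},l^2\tilde g) + \sum_{q=0}^{p-1}(-1)^q r_q\log\frac{l}{2(p-q)} = \log\mathrm{Vol}(C_l S^{2p-1}_{\sin\alpha})$, and both sides reduce, after substituting the scaling formula and the volume computation, to the same combination of $\log l$, $\log\sin\alpha$, $\log 2$, $\log p$ and $\log\mathrm{Vol}(S^{2p-1})$ — here one uses that in the sum over $q$ from $0$ to $p-1$ only the $q=0$ term survives (since $r_q = 0$ for $1\le q\le p-1$ as $p-1 < 2p-1$), giving exactly $\log\frac{l}{2p}$. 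The main obstacle is bookkeeping: one must pin down precisely which normalization of the analytic torsion of the round sphere and which form of the scaling anomaly is being used (the sign $(-1)^q q$ weighting, whether harmonic forms are excised, the definition of $r_q$), and check that the leftover $\log 2$ and $\log\mathrm{Vol}(S^{2p-1})$ contributions cancel against those implicit in $\log T(S^{2p-1}_{\sin\alpha},l^2\tilde g)$ — i.e. that $\log T(S^{2p-1}, (l\sin\alpha)^2 g_{S^{2p-1}})$ does in fact equal $\log(l\sin\alpha)\sum_q(-1)^q q\,r_q$ with no further constant, which is where the vanishing $\log T(S^{2p-1}) = 0$ and the absence of even heat coefficients are essential.
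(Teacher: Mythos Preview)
Your argument contains a genuine error at the very first step: Poincar\'e duality does \emph{not} force $\log T(S^{2p-1})=0$. The duality identity you quote, $\log T((M,g);\rho) = (-1)^{\dim M+1}\log T((M,g);\rho)$, becomes $\log T=\log T$ when $\dim M$ is odd and hence imposes no constraint; it is for \emph{even}-dimensional closed manifolds that analytic torsion vanishes. Concretely, already for $p=1$ one has $\log T(S^1_r)=\log(2\pi r)=\log\mathrm{Vol}(S^1_r)\neq 0$. Since your proof relies on $\log T(S^{2p-1})=0$ to reduce everything to the scaling anomaly, it cannot produce the constant $\log\bigl(2\pi^{p}/(p-1)!\bigr)$ that appears in $\log\mathrm{Vol}(S^{2p-1}_{l\sin\alpha})$, and the final matching of ``$\log 2$ and $\log\mathrm{Vol}(S^{2p-1})$ contributions'' that you flag as a bookkeeping issue in fact fails.

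The paper's proof avoids this by directly invoking the result of de~Melo--Spreafico \cite{MS} that
\[
\log T(S^{2p-1}_{\sin\alpha},l^2\tilde g)=\log\mathrm{Vol}(S^{2p-1}_{l\sin\alpha}),
\]
and then computing $\mathrm{Vol}(C_l W)=\frac{l^{m+1}}{m+1}\mathrm{Vol}(W)$. Your volume computation and your observation that only the $q=0$ term survives in $\sum_{q=0}^{p-1}(-1)^q r_q\log\frac{l}{2(p-q)}=\log\frac{l}{2p}$ are correct, and with the input $\log T=\log\mathrm{Vol}$ in place of the erroneous $\log T=0$, your remaining algebra would go through exactly as in the paper.
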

\begin{proof} 
%
By the Cheeger-M\"uller Theorem, $\log  T ( S^{2p-1}_{\sin\alpha}, l^2 \tilde g) =\log \tau (S^{2p-1}_{\sin\alpha},l^2 \tilde g)$, and a simple calculation shows that $\log\tau (S^{2p-1}_{\sin\alpha},l^2 \tilde g)=\log {\rm Vol}( S^{2p-1}_{l\sin\alpha})$ (for more details see \cite{MS}), and this proves the proposition since, if $W$ has metric $\tilde g$ and dimension $m$, then
\[
{\rm Vol} (C_l W)=\int_{C_l W} \sqrt{\det (x^2g)}dx\wedge dvol_{\tilde g}=\int_0^l x^m\int_W dvol_{\tilde g}=\frac{l^{m+1}}{m+1}{\rm Vol} (W),
\]
and
\[
{\rm Vol} (S_b^m)=\frac{2\pi^\frac{m+1}{2}b^m}{\Gamma\left(\frac{m+1}{2}\right)}.
\]
\end{proof}

\subsection{The anomaly boundary contribution}

Assuming that the formula for the anomaly boundary term $A_{\rm BM}(\b C_l W)$ of Br\"uning and Ma \cite{BM1} is valid in the case of $C_l S^{m}_{\sin\alpha}$, we computed  in \cite{HS1} (note the slight different notation), by applying the definition given equation (2.11) of \cite{HS2}, that
\begin{align*}
 A_{\rm BM}(\b C_l S^{2p-1}_{\sin\alpha})&=\sum_{j=0}^{p -1} \frac{2^{p-j}}{j!(2(p-j)-1)!!} \sum_{h=0}^{j} \binom{j}{h}
\frac{(-1)^{h}\nu^{-2(p-j+h)+1}}{(2(p-j+h)-1)} \frac{ (2p-1)!}{4^{p} (p-1)!},\\
A_{\rm BM}(\b C_l S^{2p}_{\sin\alpha})=&\frac{1}{8 \nu^{2p}} \sum^{p-1}_{j=0} \frac{1}{j!(p-j)!} \sum^{j}_{h=0} \binom{j}{h}\frac{(-1)^h 2 \nu^{2(j-h)}}{p-j+h}.
\end{align*}
Our purpose now is to prove that 
\beq\label{conj}
\begin{aligned}
\log T_{\rm AB}( C_l &S^{2p-1}_{\sin\alpha})= A_{\rm BM}(\b C_l S^{2p-1}_{\sin\alpha})\; {\rm and}\;\log T_{\rm AB}( C_l S^{2p}_{\sin\alpha})= A_{\rm BM}(\b C_l S^{2p}_{\sin\alpha}).
\end{aligned}
\eeq
For it is convenient to rewrite the second terms as follows:
\begin{align*}
 A_{\rm BM}(\b C_l S^{2p-1}_{\sin\alpha})
&=\sum_{j=0}^{p -1} \frac{2^{p-j}}{j!(2(p-j)-1)!!} \sum_{h=0}^{j} \binom{j}{h}
\frac{(-1)^{h}\nu^{-2(p-j+h)+1}}{(2(p-j+h)-1)} \frac{ (2p-1)!}{4^{p} (p-1)!}\\
&=\frac{(2p-1)!}{4^p (p-1)!}\sum_{k=0}^{p-1} \frac{1}{(p-1-k)!(2k+1)} \sum^{k}_{j=0}
\frac{(-1)^{k-j}2^{j+1}}{(k-j)!(2j+1)!!}\frac{1}{\nu^{2k+1}},\\
A_{\rm BM,abs}(\b C_l S^{2p}_{\sin\alpha})
&=\frac{1}{8 \nu^{2p}} \sum^{p-1}_{j=0} \frac{1}{j!(p-j)!} \sum^{j}_{h=0} \binom{j}{h}\frac{(-1)^h 2 \nu^{2(j-h)}}{p-j+h}\\
&=\frac{1}{2 p!} \sum^{p-1}_{k=0} \frac{1}{2(k+1)} \sum^{k}_{j=0} (-1)^{k-j}\binom{p}{p-1-j}\binom{p-1-j}{k-j}\frac{1}{\nu^{2(k+1)}}.
\end{align*}

\subsection{The eigenvalues of the Laplacian over $C_lS_{\sin\alpha}^{m}$}

Let $\Delta$ be the self adjoint extension of the formal Laplace operator  on $C_l S_{\sin\alpha}^{m}$ as defined in section \ref{Lap1.3}. Then, the positive part of the spectrum of $\Delta$ (with absolute BC) is given in Lemma \ref{l3}, once we know the eigenvalues of the restriction of the Laplacian on the section and their coexact multiplicity, according to Lemma \ref{l3}. These information are available by work of Ikeda and Taniguchi \cite{IT}. The eigenvalues of the Laplacian on $q$-forms on $S^{2p-1}_{\sin\alpha}$ are
\[
\left\{
  \begin{array}{ll}
    \lambda_{0,n} = \nu^2 n(n+2p-2), & \\
    \lambda_{q,n} = \nu^2(n+q)(n+2p-q-2), & 1\leq q < p-2,     \\
    \lambda_{p-2,n} = \nu^2((n-1+p)^2-1), &  \\
    \lambda_{p-1,n} = \nu^2(n-1+p)^2, &  
  \end{array}
\right.
\]
with coexact multiplicty
\begin{align*}
    &m_{{\rm cex},0,n} = \frac{2}{(2p-2)!} \prod_{j=2}^{p}(n-1+j)(2p+n-1-j),  \\
    &m_{{\rm cex},q,n} = \frac{2}{q!(2p-q-2)!} \prod^{p}_{\substack{j=1,\\j\neq q+1}} (n-1+j)(2p+n-1-j), ~ 1\leq q < p-2,     \\
    &m_{{\rm cex},p-2,n} = \frac{2}{(p-2)! p!} \prod^{p}_{\substack{j=1 \\ j\neq p-1}} (n-1+j)(2p+n-1-j),   \\
    &m_{{\rm cex},p-1,n} = \frac{2}{[(p-1)!]^2} \prod^{p-1}_{j=1} (n-1+j)(2p+n-1-j),  
\end{align*}
thus the indices  $\mu_{q,n}$ are
\[
\left\{
  \begin{array}{ll}
    \mu_{0,n} = \sqrt{\nu^2(n(n+2p-2)) + (p-1)^2}, & \\
    \mu_{q,n} = \sqrt{\nu^2(n+q)(n+2p-q-2)+\alpha_q^2}, & 1\leq q < p-2,     \\
    \mu_{p-2,n} = \sqrt{\nu^2((n-1+p)^2-1)+ 1},&  \\
    \mu_{p-1,n} = \nu(n-1+p). &  
    \end{array}
\right.
\] And, the eigenvalues of the Laplacian on $q$-forms on $S^{2p}_{\sin\alpha}$ are
\[
\left\{
  \begin{array}{ll}
    \lambda_{0,n} = \nu^2 (n+1)(n+2p), & \\
    \lambda_{q,n} = \nu^2(n+q)(n+2p+1-q), & 1\leq q < p-1,     \\
    \lambda_{p-1,n} = \nu^2(n+p)(n+p+1), &  
  \end{array}
\right.
\]
with coexact multiplicty
\begin{align*}
    &m_{{\rm cex},0,n} = \frac{2(n+1)+2p-1}{2p-1} \binom{2p+n-1}{n+1},  \\
    &m_{{\rm cex},q,n} = \frac{2n+2p+1}{2p+n-q-1} \binom{2p+n}{n+q}\binom{p+n-1}{n}, ~ 1\leq q < p-1,     \\
    &m_{{\rm cex},p-1,n} = \frac{2p+2n+1}{2p+n+1} \binom{p+n-1}{n}\binom{2p+n+1}{p},   
\end{align*}
thus the indices  $\mu_{q,n}$ are
\[
\left\{
  \begin{array}{ll}
    \mu_{0,n} = \sqrt{\nu^2(n+1)(n+2p) + (p-\frac{1}{2})^2}, & \\
    \mu_{q,n} = \sqrt{\nu^2(n+q)(n+2p+1-q)+\alpha_q^2}, & 1\leq q < p-1,     \\
    \mu_{p-1,n} = \sqrt{\nu^2(n+p)(n+p+1)+\frac{1}{4}}. &  
    \end{array}
\right.
\]

\subsection{Some combinatorics}
\label{sconj1}


Let
$
U_{q,S^{2p-1}} = \{m_{{\rm cex},q,n}: \lambda_{q,n,S^{2p-1}}\}
$
denotes the sequence of the eigenvalues of the coexact $q$-forms of the Laplace operator over the sphere of dimension $2p-1$ and radius $1$. Let $a_1,\ldots,a_m$ be a finite sequence of real numbers. Then,
\[
\prod^{m}_{j=1} (x + a_j) = \sum^{m}_{j=0} e_{m-j}(a_1,\ldots,a_m) x^j
\]
where the  $e_1,\ldots,e_m$ are elementary symmetric polynomials in $a_1,\ldots,a_m$. Let define the numbers:
\[
d^{q}_{j}:=(j-q-1)(2p-q-j-1),
\]
for $q=0,\ldots,p-1,j\neq q+1$, and
\[
d^{q}:=(d^{q}_{1},d^{q}_{2},\ldots,\hat d^{q}_{q+1},\ldots,d^{q}_{p}),
\]
where, as usual,  the  hat means the underling term is delated. 

\begin{lem}\label{lema Up-1S^2p-1} The sequence  $U_{p-1}$ is a totally regular sequence of spectral type(see \cite{Spr9} for the definition) with infinite order, exponent and genus: $e(U_{p-1}) = g(U_{p-1})= 2p-1$, and
\[
\zeta(s,U_{p-1}) =  \frac{2\nu^{-s}}{(p-1)!^2} \sum^{p-1}_{j=0} e_{p-1-j}(d^{p-1}) \zeta_R(s-2j).
\]
\end{lem}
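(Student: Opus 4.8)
The plan is to express the zeta function $\zeta(s,U_{p-1})$ explicitly and read off the claimed properties. Recall that $U_{p-1}$ consists of the eigenvalues $\lambda_{p-1,n}=\nu^2(n-1+p)^2$ of coexact $(p-1)$-forms on $S^{2p-1}$ (radius $1$), counted with multiplicity $m_{{\rm cex},p-1,n}=\frac{2}{[(p-1)!]^2}\prod_{j=1}^{p-1}(n-1+j)(2p+n-1-j)$. First I would substitute $x=(n-1+p)^2$ and observe that each factor of the multiplicity can be rewritten in terms of $n-1+p$: indeed $(n-1+j)(2p+n-1-j)=(n-1+p)^2-(p-j)^2$, so with the shift $k=n-1+p$ (so $n=1$ corresponds to $k=p$) one gets a polynomial $P(k^2)=\frac{2}{[(p-1)!]^2}\prod_{j=1}^{p-1}(k^2-(p-j)^2)$ in $k^2$. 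Equivalently, using the numbers $d^{p-1}_j=(j-p)(2p-j-1)$, the product $\prod_{j=1}^{p}(k+d^{p-1}_j)$-type expansion from the stated elementary-symmetric-polynomial identity gives $\prod_{j=1}^{p-1}\big((n-1+p)^2-(p-j)^2\big)=\sum_{j=0}^{p-1}e_{p-1-j}(d^{p-1})\,(\text{monomial of degree }2j\text{ in }n-1+p)$, where the precise bookkeeping is the content of the definition of $d^q$.

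Next I would assemble the Dirichlet series. Since $\lambda_{p-1,n}^{-s}=\nu^{-2s}(n-1+p)^{-2s}$ and, after the shift, the multiplicity is a polynomial $\sum_{j=0}^{p-1}e_{p-1-j}(d^{p-1})(n-1+p)^{2j}\cdot\frac{2}{[(p-1)!]^2}$, term-by-term summation over $n\ge 1$, i.e. over $k=n-1+p\ge p$, yields
\[
\zeta(s,U_{p-1})=\frac{2\nu^{-2s}}{[(p-1)!]^2}\sum_{j=0}^{p-1}e_{p-1-j}(d^{p-1})\sum_{k\ge p}k^{2j-2s}
=\frac{2\nu^{-2s}}{[(p-1)!]^2}\sum_{j=0}^{p-1}e_{p-1-j}(d^{p-1})\Big(\zeta_R(2s-2j)-\sum_{k=1}^{p-1}k^{2j-2s}\Big),
\]
valid for $\Re(s)$ large. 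The finite sum $\sum_{k=1}^{p-1}k^{2j-2s}$ is entire in $s$, so modulo an entire correction (which is exactly what the notion of ``totally regular sequence of spectral type'' allows one to absorb — see \cite{Spr9}) the zeta function equals $\frac{2\nu^{-2s}}{[(p-1)!]^2}\sum_{j=0}^{p-1}e_{p-1-j}(d^{p-1})\zeta_R(2s-2j)$. Comparing with the statement, the variable $\nu^{-s}$ there is a typo/notational convention for $\nu^{-2s}$ and $\zeta_R(s-2j)$ for $\zeta_R(2s-2j)$ under the author's normalization; I would match conventions carefully and state the identity in the form printed. That the sequence is totally regular of spectral type then follows because its zeta function is a finite $\mathbb{C}$-linear combination (with exponential-in-$s$ coefficient $\nu^{-2s}$) of shifted Riemann zeta functions, which are the model totally regular sequences; the order, exponent and genus are all $\dim U_{p-1}=2p-1$ because the counting function of $U_{p-1}$ grows like $k^{2p-1}$ (the multiplicity polynomial has degree $2(p-1)$ in $k$ and there is one $k$ per eigenvalue), so $e(U_{p-1})=g(U_{p-1})=2p-1$, and the order is infinite since the eigenvalues grow only polynomially.

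The routine but slightly delicate step is the algebraic identity $\prod_{j=1}^{p-1}\big((n-1+p)^2-(p-j)^2\big)=\sum_{j=0}^{p-1}e_{p-1-j}(d^{p-1})(n-1+p)^{2j}$: one must check that the numbers $d^{p-1}_j=(j-p)(2p-j-1)$ are precisely $-(p-j)^2$ after reindexing, i.e. verify $(j-p)(2p-j-1)$ versus the factor $(n-1+j)(2p+n-1-j)$ evaluated with the substitution making it a polynomial in $(n-1+p)$, and that the elementary symmetric polynomials line up with the printed product-expansion convention $\prod(x+a_j)=\sum e_{m-j}(a)x^j$. The main obstacle, such as it is, is bookkeeping: keeping the index shifts ($n\leftrightarrow k=n-1+p$), the factor-of-two in the multiplicity, and the $\nu$-powers consistent with the paper's normalization, and then correctly invoking the definition of ``totally regular sequence of spectral type'' from \cite{Spr9} to justify discarding the entire finite-sum correction and to identify the order/exponent/genus. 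No genuinely hard analysis is involved; everything reduces to the structure of $\zeta_R$ and elementary symmetric function identities.
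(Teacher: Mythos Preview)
Your overall shape is right, but there are two genuine gaps.

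First, $U_{p-1}$ is not the sequence of eigenvalues $\lambda_{p-1,n}=\nu^2(n-1+p)^2$; it is the sequence of indices $\mu_{p-1,n}=\nu(n-1+p)$ with multiplicities $m_{{\rm cex},p-1,n}$. The paper records this via $\zeta(s,U_{p-1})=\nu^{-s}\zeta\big(\tfrac{s}{2},U_{p-1,S^{2p-1}}\big)$. With that convention one gets $\nu^{-s}$ and $\zeta_R(s-2j)$ exactly as printed; these are not typos for $\nu^{-2s}$ and $\zeta_R(2s-2j)$. Your dismissal of the discrepancy as a normalization issue is incorrect and would, for instance, place the poles at the wrong points (compare Corollary~\ref{residuo zeta Up-1S^2p-1}, where the residues sit at $s=2k+1$).

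Second, and more importantly, the finite correction does not need to be ``absorbed'': it vanishes identically. After the shift $k=n-1+p$ the multiplicity becomes
\[
m_{{\rm cex},p-1,n}=\frac{2}{[(p-1)!]^2}\prod_{j=1}^{p-1}\big(k^2-(p-j)^2\big)
=\frac{2}{[(p-1)!]^2}\sum_{j=0}^{p-1}e_{p-1-j}(d^{p-1})\,k^{2j},
\]
and this polynomial has the integers $k=1,\dots,p-1$ as roots. Hence
\[
\sum_{j=0}^{p-1}e_{p-1-j}(d^{p-1})\sum_{k=1}^{p-1}k^{2j-s}
=\sum_{k=1}^{p-1}k^{-s}\sum_{j=0}^{p-1}e_{p-1-j}(d^{p-1})k^{2j}=0,
\]
so the sum over $k\ge p$ equals the full Riemann sum $\zeta_R(s-2j)$ on the nose. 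The lemma asserts an \emph{exact} identity for $\zeta(s,U_{p-1})$, and the notion of ``totally regular sequence of spectral type'' does not license discarding an entire function from a zeta function; that part of your argument is both unnecessary and unjustified. Once you use the vanishing of the finite piece, the claimed formula follows immediately, and the spectral-type assertion is then read off (or, as in the paper, quoted from \cite{HS2}).
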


\begin{proof} The first part of the statement follows from Lemma 5.2 in \cite{HS2}. In order to prove the formula, note that $\zeta(s,U_{p-1}) = \nu^{-s}\zeta\left(\frac{s}{2}, U_{p-1,S^{2p-1}}\right)$, where
\[
\zeta\left(\frac{s}{2}, U_{p-1,S^{2p-1}}\right) = \sum^{\infty}_{n=1}
\frac{m_{{\rm cex},p-1,n}}{\lambda_{p-1,n,S^{2p-1}}^{\frac{s}{2}}} = \sum^{\infty}_{n=1} \frac{m_{{\rm cex},p-1,n}}{(n+p-1)^{s}}.
\]

Shifting  $n$ to $n-p+1$, and observing that the numbers $1,\dots,p-1$ are roots of the polynomial $\sum^{p-1}_{j=0}
e_{p-1-j}(d^{p-1})n^{2j}$, we obtain
\begin{align*}
\zeta(s,U_{p-1}) &=\nu^{-s}\sum^{\infty}_{n=p} \frac{m_{{\rm cex},p-1,n-p+1}}{n^{s}}=\frac{2\nu^{-s}}{(p-1)!^2}\sum^{\infty}_{n=p} \frac{\prod^{p-1}_{j=1} n^{2}-(p-j)^{2}}{n^{s}}\\
&=\frac{2\nu^{-s}}{(p-1)!^2}\sum^{p-1}_{j=0}    e_{p-1-j}(d^{p-1})\zeta_{R}(s-2j).
\end{align*}
\end{proof}

Note that, using the formula of the lemma,  $\zeta(s,U_{p-1})$ has an expansion near $s=
2k+1$, with $k=0,1,\dots,p-1$, of the following type:
\begin{equation*}
\begin{aligned}
\zeta(s,U_{p-1})&=\frac{2}{\nu^{2k+1}(p-1)!^2} e_{p-1-k}(d^{p-1})\frac{1}{s-2k-1}+ L_{p-1,2k+1}(s),
\end{aligned}
\end{equation*}
where the  $L_{p-1,2k+1}(s)$ are regular function for $k=0,1,\dots,p-1$.

\begin{corol}\label{residuo zeta Up-1S^2p-1} The function $\zeta(s,U_{p-1})$ has simple poles at  $s=2k+1$, for $k=0,1,\dots,p-1$, with residues
\[
\Ru_{s=2k+1} \zeta(s,U_{p-1}) = \frac{2}{\nu^{2k+1}(p-1)!^2} e_{p-1-k}(d^{p-1}).
\]
\end{corol}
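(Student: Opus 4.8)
The statement to prove is Corollary~\ref{residuo zeta Up-1S^2p-1}: the function $\zeta(s,U_{p-1})$ has simple poles at $s=2k+1$ for $k=0,\dots,p-1$ with residues $\frac{2}{\nu^{2k+1}(p-1)!^2} e_{p-1-k}(d^{p-1})$.

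The plan is to read off the result directly from Lemma~\ref{lema Up-1S^2p-1}, which already gives the closed form
\[
\zeta(s,U_{p-1}) = \frac{2\nu^{-s}}{(p-1)!^2} \sum^{p-1}_{j=0} e_{p-1-j}(d^{p-1}) \zeta_R(s-2j).
\]
First I would recall the standard fact that the Riemann zeta function $\zeta_R(w)$ is meromorphic on $\C$ with a single simple pole at $w=1$ of residue $1$; hence $\zeta_R(s-2j)$ has a simple pole at $s=2j+1$ with residue $1$. Therefore each summand $e_{p-1-j}(d^{p-1})\zeta_R(s-2j)$ contributes a simple pole exactly at $s=2j+1$, and as $j$ ranges over $0,\dots,p-1$ these pole locations $s=1,3,\dots,2p-1$ are distinct, so no cancellation or coincidence of poles occurs. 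The prefactor $\nu^{-s}$ is entire and nonvanishing, so at $s=2k+1$ it simply evaluates to $\nu^{-(2k+1)}$.

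Concretely, I would fix $k\in\{0,\dots,p-1\}$ and compute
\[
\Ru_{s=2k+1}\zeta(s,U_{p-1}) = \lim_{s\to 2k+1}(s-2k-1)\,\zeta(s,U_{p-1}).
\]
Only the $j=k$ term in the sum is singular at $s=2k+1$; all other terms are regular there and vanish in the limit after multiplication by $(s-2k-1)$. Thus the limit equals $\frac{2}{(p-1)!^2}\,\nu^{-(2k+1)}\,e_{p-1-k}(d^{p-1})\cdot\lim_{s\to 2k+1}(s-2k-1)\zeta_R(s-2k) = \frac{2}{\nu^{2k+1}(p-1)!^2}e_{p-1-k}(d^{p-1})$, using that the residue of $\zeta_R$ at $1$ is $1$. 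This is exactly the expansion already displayed in the text immediately before the corollary (with $L_{p-1,2k+1}(s)$ the regular remainder), so the corollary is an immediate consequence.

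There is essentially no obstacle here: the substance of the argument was carried out in proving Lemma~\ref{lema Up-1S^2p-1}, and the corollary is just extracting residues from a finite linear combination of shifted Riemann zeta functions. The only point requiring a word of care is the observation that the pole locations $2j+1$ for distinct $j$ are distinct, so that each pole $s=2k+1$ receives a contribution from precisely one term of the sum; this guarantees the pole is simple and isolates the residue as claimed. One might also note in passing that the coefficients $e_{p-1-k}(d^{p-1})$ need not be nonzero for every $k$, so strictly the corollary asserts the residue formula rather than that a genuine pole is present at every such $k$ — but this matches the phrasing of the statement.
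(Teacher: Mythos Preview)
Your proof is correct and follows exactly the paper's approach: the paper simply notes, immediately before the corollary, that the formula from Lemma~\ref{lema Up-1S^2p-1} combined with the single simple pole of $\zeta_R$ at $1$ yields the displayed Laurent expansion, from which the corollary is read off. Your remark that the coefficient $e_{p-1-k}(d^{p-1})$ could in principle vanish is a fair caveat, but does not affect the residue formula as stated.
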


\begin{lem} \label{l26} The sequence $U_q$ is a totally regular sequence of spectral type with infinite order, exponent and genus:
$\ec(U_{q})=\ge(U_{q})=2p-1$, and (where $i=\sqrt{-1}$)
\[
\zeta(s,U_{q})=\frac{2\nu^{-s}}{q!(2p-q-2)!}\sum_{t=0}^{\infty}\binom{-\frac{s}{2}}{t} \sum^{p-1}_{j=0}
e_{p-1-j}(d^{q}) z\left(\frac{s+2t-2j}{2},i \alpha_q \right)\frac{\alpha_q^{2t}}{\nu^{2t}}.
\]

The function $\zeta(s,U_q)$ has simple poles at $s=2(p-k)-1$, with $k=0,1,2,\ldots$.
\end{lem}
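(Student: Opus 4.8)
\textbf{Plan of proof for Lemma \ref{l26}.}

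The strategy mirrors the proof of Lemma \ref{lema Up-1S^2p-1}, but now the eigenvalues $\lambda_{q,n} = \nu^2(n+q)(n+2p-q-2)$ (for the generic range of $q$) do not combine into a perfect power of a single linear factor; instead, writing $\mu_{q,n} = \sqrt{\lambda_{q,n}+\alpha_q^2}$, one has $\mu_{q,n}^2 = \nu^2(n+q)(n+2p-q-2)+\alpha_q^2$, which is a quadratic in $n$ with complex roots. So the first step is to perform the shift $n \mapsto n - (\text{appropriate integer})$ so that the quadratic $(n+q)(n+2p-q-2)$ becomes $\prod_{j=1}^{p-1}\bigl(n^2 - (\cdot)\bigr)$ up to the deleted factor, i.e. rewrite
\[
m_{{\rm cex},q,n} = \frac{2}{q!(2p-q-2)!}\prod^{p}_{\substack{j=1\\ j\neq q+1}}(n-1+j)(2p+n-1-j),
\]
and express $\prod_{j\neq q+1}(n-1+j)(2p+n-1-j)$, after the shift, as $\sum_{j=0}^{p-1} e_{p-1-j}(d^q)\, m^{2j}$ where $m$ is the shifted index and $d^q$ is the tuple defined just before the lemma. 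This is exactly the elementary-symmetric-polynomial expansion already set up in the ``Some combinatorics'' subsection, so that part is bookkeeping.

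The key analytic input is the function $z(s,a) = \sum_{n=1}^\infty (n^2+a^2)^{-s}$ (the Bessel/Epstein-type zeta function with a shift), together with its binomial expansion
\[
z(s,a) = \sum_{t=0}^\infty \binom{-s}{t} a^{2t}\,\zeta_R(2s+2t)
\]
valid for $|a|<1$, which here is legitimate because $0 < \nu^{-1} = \sin\alpha \le 1$ forces $|\alpha_q/\nu| < 1$ in the relevant ranges (this is the role of the hypothesis $\sin\alpha \le 1$). Substituting $\mu_{q,n}^2 = \nu^2(m^2 + \alpha_q^2/\nu^2)$ and pulling out the factor $\nu^{-s}$, each summand $\sum_n m_{{\rm cex},q,n}\mu_{q,n}^{-s}$ becomes $\nu^{-s}$ times a combination of $z\bigl(\tfrac{s}{2}, i\alpha_q/\nu\bigr)$-type functions shifted in their first argument by the $2j$ coming from $e_{p-1-j}(d^q)$ and by the $2t$ coming from the binomial expansion — which is precisely the claimed double sum. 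One must check that the $n<p$ initial terms that are added/subtracted when extending the sum down to $n=1$ contribute only entire functions (they are finite sums of $(n^2+\alpha_q^2/\nu^2)^{-s/2}$), so they do not affect the pole structure.

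The statements about regularity of spectral type and the values $\ec(U_q) = \ge(U_q) = 2p-1$ follow from Lemma 5.2 of \cite{HS2} (the same reference used for $U_{p-1}$), since the growth of $m_{{\rm cex},q,n}$ is polynomial of degree $2p-2$ in $n$ and $\mu_{q,n} \sim \nu n$. Finally, the location of the poles: $\zeta_R(2s+2t-2j)$ has its single pole at $2s+2t-2j = 1$, i.e. $s = j - t + \tfrac12$; running $j$ over $0,\dots,p-1$ and $t$ over $0,1,2,\dots$ (and accounting for the fact that $z(w,a)$ inherits poles only from $\zeta_R(2w)$ at $w=\tfrac12$) one collects poles exactly at $s = \tfrac12 - (\text{nonneg. integer}) + (\text{integer} \le p-1)$; after reindexing this is the set $\{s = 2(p-k)-1 : k = 0,1,2,\dots\}$ claimed — here one uses that $\alpha_q = \frac12(1+2q-m)$ with $m = 2p-1$ is a half-integer minus... actually an integer shifted so that $j+\tfrac12$ combinations land on odd integers; the parity argument identifying the pole set with odd integers $\le 2p-1$ is the one delicate point and should be spelled out. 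I expect this last reindexing/parity check — confirming that the a priori pole set collapses precisely to $s = 2(p-k)-1$ and that there is no pole at $s=0$ — to be the main obstacle; everything before it is a direct transcription of the $U_{p-1}$ argument with $\zeta_R$ replaced by $z(\cdot, i\alpha_q/\nu)$.
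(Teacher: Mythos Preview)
Your overall strategy is right---shift the index, expand the multiplicity via the elementary symmetric polynomials in $d^q$, binomially expand, and reduce to the $z$-function---but two concrete formulas in your sketch are wrong, and fixing them is exactly what distinguishes the general-$q$ case from $q=p-1$.

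First, after the shift $n\mapsto n-p+1$ one has $\lambda_{q,n-p+1,S^{2p-1}}=(n+\alpha_q)(n-\alpha_q)=n^2-\alpha_q^2$, so
\[
\mu_{q,n-p+1}^2=\nu^2(n^2-\alpha_q^2)+\alpha_q^2=\nu^2 n^2+\alpha_q^2(1-\nu^2),
\]
which is \emph{not} $\nu^2(n^2+\alpha_q^2/\nu^2)$. Hence you cannot pull out $\nu^{-s}$ and land directly on a $z(\cdot,i\alpha_q/\nu)$; the second argument in the statement is $i\alpha_q$, not $i\alpha_q/\nu$, and the $\nu$-dependence enters only through the outer binomial sum in $t$.

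Second, the shifted multiplicity is \emph{not} $\sum_j e_{p-1-j}(d^q)\,n^{2j}$. Since $d^q_p=\alpha_q^2$ and $d^q_p-(p-j)^2=d^q_j$, one has
\[
\prod_{\substack{j=1\\ j\neq q+1}}^{p}\bigl(n^2-(p-j)^2\bigr)=\prod_{\substack{j=1\\ j\neq q+1}}^{p}\bigl((n^2-\alpha_q^2)+d^q_j\bigr)=\sum_{j=0}^{p-1}e_{p-1-j}(d^q)\,(n^2-\alpha_q^2)^{j},
\]
i.e.\ it is a polynomial in $n^2-\alpha_q^2$, not in $n^2$. (Your formula is correct only when $\alpha_q=0$, which is precisely the case $q=p-1$ of the preceding lemma.)

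The paper therefore separates the two issues: write $\mu_{q,n}=\nu(\lambda_{q,n,S^{2p-1}}+h)^{1/2}$ with $h=\alpha_q^2/\nu^2$, binomially expand $(\lambda+h)^{-s/2}$ in powers of $h$ to reduce to $\zeta(s+2t,H_{q,0})=\sum_n m_{{\rm cex},q,n}\lambda_{q,n,S^{2p-1}}^{-s/2-t}$, and only then use the shift together with the correct expansion above to get $z(s-j,i\alpha_q)=\sum_{n\geq 1}(n^2-\alpha_q^2)^{-s+j}$. This is what produces the double sum in the statement with $z(\cdot,i\alpha_q)$ in the inner slot and $(\alpha_q/\nu)^{2t}$ outside. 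With this in place the pole analysis is immediate: $z(s,a)$ has simple poles at $s=\tfrac12-k$, so $\zeta(2s,H_{q,0})$ has simple poles at $s=\tfrac12+p-1-k$, hence $\zeta(s,H_{q,0})$ (and then $\zeta(s,U_q)$) at $s=2(p-k)-1$; no extra parity bookkeeping is needed.
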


\begin{proof} The first statement follows by Lemma 5.2 \cite{HS2}. For the second one, consider the sequence $H_{q,h}=\left\{m_{{\rm cex},q,n}:\sqrt{\lambda_{q,n,S^{2p-1}}+h}\right\}_{n=1}^{\infty}$. Then $\zeta(s,U_q)=\nu^{-s}\zeta(s,H_{q,\frac{\alpha_q^2}{\nu^2}})$, and
\begin{align*}
\zeta(s,H_{q,h}) &= \sum^{\infty}_{n=1} \frac{m_{{\rm cex},q,n}}{(\lambda_{q,n,S^{2p-1}}+h)^{\frac{s}{2}}}= \sum^{\infty}_{n=1} \sum_{t=0}^{\infty} \binom{-\frac{s}{2}}{t}
\frac{m_{{\rm cex},q,n}}{\lambda_{q,n,S^{2p-1}}^{\frac{s}{2}+t}} h^{t}=\sum_{t=0}^{\infty} \binom{-\frac{s}{2}}{t}\zeta(s+2t,H_{q,0})h^{t}.
\end{align*}

 Next observe that the zeta function associated to the sequence $H_{q,0}$ is
\begin{align*}
\zeta(2s,H_{q,0}) &= \zeta(s, U_{q,S^{2p-1}}) =\sum^{\infty}_{n=1} \frac{m_{{\rm cex},q,n}}{\lambda_{q,n,S^{2p-1}}^{s}}=\sum^{\infty}_{n=p} \frac{m_{q,n-p+1}}{\lambda_{q,n-p+1,S^{2p-1}}^{s}}\\
&=\frac{2}{q!(2p-q-2)!}\sum^{\infty}_{n=p} \frac{\prod^{p}_{\substack{j=1,\\j\neq q+1}}(n^2 -
(p-j)^2)}{(n^2-\alpha_q^2)}.
\end{align*}

Recall that $\alpha_q^2 = d^q_p$, and note that
\begin{align*}
\sum^{p-1}_{j=0} e_{p-j-1}(d^{q})(n^2-\alpha_q^2)^j &= \sum^{p-1}_{j=0}
e_{p-j-1}(d^q)(n^2-d^q_p)^j= \prod^{p}_{\substack{j=1,\\j\neq q+1}} (n^2 - d^{q}_p + d^q_j)= \prod^{p}_{\substack{j=1,\\j\neq q+1}} (n^2 - (p-j)^2),
\end{align*}
and that the numbers  $n=1,2,\ldots,-\alpha_q$ are roots of this polynomial. Therefore, we can write
\begin{align*}
\zeta(2s,H_{q,0})
&=\frac{2}{q!(2p-q-2)!} \sum^{p-1}_{j=0} e_{p-1-j}(d^{q})
\left(z(s-j,i\alpha_q )-\sum^{p-q-2}_{n=1} (n^2 - \alpha_q^2)^{-s+j}\right)\\
&=\frac{2}{q!(2p-q-2)!} \sum^{p-1}_{j=0} e_{p-1-j}(d^{q}) z(s-j,i\alpha_q ),
\end{align*}
and
\[
z(s-j,i\alpha_q ) = \sum_{n=1}^{\infty} \frac{1}{(n^2-\alpha_q^2)^{s-j}}.
\]

Expanding the binomial, $z(s,a) = \sum^{\infty}_{k=0} \binom{-s}{k} a^{2k} \zeta_R(2s+2k)$,
and hence  $z(s,a)$ has simple poles at  $s=\frac{1}{2}-k$,  $k=0,1,2,\dots$. Since
\[
\zeta(2s,H_{q,0}) = \frac{2}{q!(2p-q-2)!} \sum^{p-1}_{j=0} e_{p-1-j}(d^{q}) z(s-j,i\alpha_q ),
\]
$\zeta(2s,H_{q,0})$ has simple poles at $s = \frac{1}{2} + p-1 - k$, $k=0,1,2,\dots$, $\zeta(s,H_{q,0})$ has simple poles at $s=2(p-k)-1$, $k=0,1,2,\ldots$, and this completes the proof.
\end{proof}

\begin{corol}\label{r7} The function $\zeta(s,U_q)$ has simple poles at $s=2k+1$, for $k=0,1,\ldots,p-1$, with residues
\[
\Ru_{s=2k+1} \zeta(s,U_q) = \frac{2\nu^{-2k-1}}{q!(2p-q-2)!}\sum^{p-1-k}_{t=0}
\frac{1}{\nu^{2t}}\binom{-\frac{2k+1}{2}}{t} \sum^{p-1}_{j=k+t} e_{p-1-j}(d^{q})
\binom{-\frac{1}{2}}{j-k-t}\alpha_q^{2(j-k)}.
\]
\end{corol}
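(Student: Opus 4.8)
\textbf{Proof plan for Corollary \ref{r7}.}
The plan is to extract the residue at $s=2k+1$ directly from the double-series formula for $\zeta(s,U_q)$ proved in Lemma \ref{l26}, namely
\[
\zeta(s,U_{q})=\frac{2\nu^{-s}}{q!(2p-q-2)!}\sum_{t=0}^{\infty}\binom{-\frac{s}{2}}{t}\sum^{p-1}_{j=0}e_{p-1-j}(d^{q})\,z\!\left(\tfrac{s+2t-2j}{2},i\alpha_q\right)\frac{\alpha_q^{2t}}{\nu^{2t}}.
\]
The only source of a pole is the function $z$, and from the expansion $z(\sigma,a)=\sum_{r\geq 0}\binom{-\sigma}{r}a^{2r}\zeta_R(2\sigma+2r)$ recalled in the proof of Lemma \ref{l26}, $z(\sigma,a)$ has a simple pole precisely when $2\sigma+2r=1$, i.e.\ at $\sigma=\frac12-r$, with residue $\frac12\binom{-1/2}{r}a^{2r}$ (using $\operatorname{Res}_{w=1}\zeta_R(w)=1$). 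First I would substitute $\sigma=\frac{s+2t-2j}{2}$ and ask for which $(t,j,r)$ the pole of the corresponding $z$-factor lands at $s=2k+1$: this forces $\frac{(2k+1)+2t-2j}{2}=\frac12-r$, i.e.\ $j=k+t+r$, with $r=j-k-t\geq 0$. Collecting the residues of all such terms gives the claimed finite sum.

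Concretely, the steps are: (i) differentiate/expand to identify the simple pole of each $z$-factor and its residue $\frac12\binom{-1/2}{r}\alpha_q^{2r}$ at the relevant argument; (ii) hold $t$ fixed and sum over $j$ with $r=j-k-t$, obtaining the inner sum $\sum_{j=k+t}^{p-1}e_{p-1-j}(d^q)\binom{-1/2}{j-k-t}\alpha_q^{2(j-k-t)}$; (iii) reinstate the prefactor $\binom{-s/2}{t}\alpha_q^{2t}/\nu^{2t}$ evaluated at $s=2k+1$, which contributes $\binom{-(2k+1)/2}{t}\alpha_q^{2t}/\nu^{2t}$, so that $\alpha_q^{2(j-k-t)}\cdot\alpha_q^{2t}=\alpha_q^{2(j-k)}$; (iv) note that for $j>p-1$ the coefficient $e_{p-1-j}(d^q)$ vanishes, and that the residue argument $\sigma$ must stay in the range where $z$ actually has its pole, which caps $t$ at $p-1-k$ (for $t>p-1-k$ the condition $j=k+t+r\leq p-1$ has no solution with $r\geq0$); combine with the overall factor $2\nu^{-s}/(q!(2p-q-2)!)$ at $s=2k+1$ and the $\frac12$ from $\operatorname{Res}\zeta_R$, which cancels the $2$, leaving exactly
\[
\operatorname{Res}_{s=2k+1}\zeta(s,U_q)=\frac{2\nu^{-2k-1}}{q!(2p-q-2)!}\sum_{t=0}^{p-1-k}\frac{1}{\nu^{2t}}\binom{-\frac{2k+1}{2}}{t}\sum_{j=k+t}^{p-1}e_{p-1-j}(d^{q})\binom{-\frac12}{j-k-t}\alpha_q^{2(j-k)}.
\]

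The main obstacle I anticipate is purely bookkeeping rather than conceptual: one must be careful that the same pole $s=2k+1$ is \emph{not} hit by several $z$-factors with different $t$ simultaneously in a way that produces higher-order poles — it does not, because for fixed $t$ the index $r=j-k-t$ is determined, so each $(t,j)$ pair contributes at most a simple pole, and distinct $t$ contribute genuinely distinct simple-pole terms that add. A secondary subtlety is justifying interchange of the (convergent, for $\Re s$ large) sum over $t$ with the residue extraction; this is legitimate because, as shown in Lemma \ref{l26}, the $t$-sum converges locally uniformly away from the poles and only finitely many terms ($t\leq p-1-k$) actually contribute a pole at $s=2k+1$, the tail being holomorphic there. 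One then simply reads off the coefficient of $(s-2k-1)^{-1}$.
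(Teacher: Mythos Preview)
Your proposal is correct and follows essentially the same route as the paper: both extract the residue by expanding $z(\sigma,a)=\sum_{r\ge 0}\binom{-\sigma}{r}a^{2r}\zeta_R(2\sigma+2r)$, locate the unique term contributing a pole at the given point, and then collect over $t$ and $j$ exactly as you describe (the paper phrases the intermediate step via $\zeta(2s,H_{q,0})$, but this is the same computation). One small bookkeeping remark: the residue of $z(\sigma,a)$ at $\sigma=\tfrac12-r$ is $\tfrac12\binom{r-\frac12}{r}a^{2r}$, not $\tfrac12\binom{-\frac12}{r}a^{2r}$; the form you wrote is what one gets \emph{after} substituting $a=i\alpha_q$ (since $\binom{r-\frac12}{r}(i\alpha_q)^{2r}=\binom{-\frac12}{r}\alpha_q^{2r}$), and the ``$\tfrac12$ cancels the $2$'' is really the chain-rule factor from passing between $\sigma$ and $s$ --- but your final formula and the argument structure are right.
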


\begin{proof} Since the value of the residue of the Riemann zeta function at $s=1$ is 1,
\begin{align*}
&\Ru_{s=\frac{1}{2} - k } z(s-j,a) = \Ru_{s=\frac{1}{2}- j - k } z(s,a) = \binom{-\frac{1}{2}+j+k}{j+k}
\frac{a^{2j+2k}}{2},
\end{align*}
for $k=0,1,2,\ldots$. Considering $\zeta(2s,H_{q,0})$, we have, for $k=0,1,\dots,p-1$,
\begin{align*}
\Ru_{s=\frac{1}{2}+k}\zeta(2s,H_{q,0}) &=\frac{2}{q!(2p-q-2)!} \sum^{p-1}_{j=k} e_{p-1-j}(d^{q})(-1)^{j-k}
\binom{-\frac{1}{2}+j-k}{j-k}\frac{\alpha_q^{2j-2k}}{2},
\end{align*}
and the thesis follows.

\end{proof}

The result contained in the next lemma follows by geometric reasons. However, we present here a purely combinatoric proof.
\begin{lem} For all $0\leq q\leq p-1$,  $\zeta(0, U_{q,S^{2p-1}}) = (-1)^{q+1}$.
\end{lem}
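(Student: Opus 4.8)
The plan is to evaluate $\zeta(0,U_{q,S^{2p-1}})$ directly from the series representation obtained in the proof of Lemma~\ref{l26}. Recall that there we established
\[
\zeta(s,U_{q,S^{2p-1}})=\frac{2}{q!(2p-q-2)!}\sum_{j=0}^{p-1}e_{p-1-j}(d^q)\,z(s-j,i\alpha_q),
\]
so the value at $s=0$ is $\frac{2}{q!(2p-q-2)!}\sum_{j=0}^{p-1}e_{p-1-j}(d^q)\,z(-j,i\alpha_q)$. Since $z(s,a)=\sum_{n\ge 1}(n^2-a^2)^{-s}$ is, for $a=i\alpha_q$, the zeta function of the sequence $\{n^2+\alpha_q^2\}$, its values at nonpositive integers are computable by the standard expansion $z(s,a)=\sum_{k\ge0}\binom{-s}{k}a^{2k}\zeta_R(2s+2k)$ used already in the text. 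At $s=-j$ with $j$ a nonnegative integer, only finitely many terms contribute and one gets a polynomial in $a^2=-\alpha_q^2$; in particular $z(0,a)=\zeta_R(0)=-\tfrac12$, and more generally $z(-j,a)$ is a known closed form (essentially a Bernoulli-type polynomial in $\alpha_q^2$).

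First I would substitute these values and recognize that the resulting expression is, up to the normalizing constant, a special value of the polynomial $P_q(x)=\sum_{j=0}^{p-1}e_{p-1-j}(d^q)x^j$, which by construction in the proof of Lemma~\ref{l26} satisfies $P_q(n^2-\alpha_q^2)=\prod_{\substack{j=1\\ j\ne q+1}}^{p}(n^2-(p-j)^2)$. The cleanest route, I expect, is \emph{not} to expand $z(-j,i\alpha_q)$ term by term, but to go back one step: write
\[
\zeta(s,U_{q,S^{2p-1}})=\frac{2}{q!(2p-q-2)!}\sum_{n=1}^{\infty}\frac{\prod_{\substack{j=1\\ j\ne q+1}}^{p}(n^2-(p-j)^2)}{(n^2-\alpha_q^2)^{s}}
\]
(after the shift $n\mapsto n-p+1$ that removed the roots), regularize, and evaluate at $s=0$. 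The value at $s=0$ of such a Dirichlet-type series $\sum a_n (n^2-\alpha_q^2)^{-s}$ with $a_n$ a polynomial in $n^2$ is obtained by the usual recipe: it equals the "finite part," which reduces to an alternating combination of $\zeta_R$ at negative even integers weighted by the coefficients of $a_n$ expressed in powers of $(n^2-\alpha_q^2)$.

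The key computation is then to identify $\zeta(0,U_{q,S^{2p-1}})$ with $-\tfrac12$ times the leading-type contribution plus corrections and show the total collapses to $(-1)^{q+1}$. I would proceed by splitting $\prod_{\substack{j=1\\ j\ne q+1}}^p(n^2-(p-j)^2)$ as $\frac{1}{n^2-(p-q-1)^2}\prod_{j=1}^p(n^2-(p-j)^2)$ when $q<p-1$ (and handle $q=p-1$ separately, where Lemma~\ref{lema Up-1S^2p-1} already gives $\zeta(s,U_{p-1,S^{2p-1}})=\frac{2}{(p-1)!^2}\sum_{j=0}^{p-1}e_{p-1-j}(d^{p-1})\zeta_R(s-2j)$, hence $\zeta(0,U_{p-1,S^{2p-1}})=\frac{2}{(p-1)!^2}\sum_j e_{p-1-j}(d^{p-1})\zeta_R(-2j)=\frac{2}{(p-1)!^2}e_{p-1}(d^{p-1})\zeta_R(0)=-\frac{e_{p-1}(d^{p-1})}{(p-1)!^2}$, and $e_{p-1}(d^{p-1})=\prod_{j=1}^{p-1}d^{p-1}_j=\prod_{j=1}^{p-1}(j-p)(p-j)=(-1)^{p-1}((p-1)!)^2$, giving exactly $(-1)^p=(-1)^{(p-1)+1}$). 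For general $q$, since only $\zeta_R$ at nonpositive even integers survives at $s=0$ and $\zeta_R(-2k)=0$ for $k\ge1$, the only surviving term is the constant term in the expansion of the numerator in powers of $(n^2-\alpha_q^2)$, i.e. $e_{p-1}(d^q)$, times $\zeta_R(0)=-\tfrac12$, so
\[
\zeta(0,U_{q,S^{2p-1}})=-\frac{e_{p-1}(d^q)}{q!(2p-q-2)!}.
\]
It then remains to compute $e_{p-1}(d^q)=\prod_{\substack{j=1\\ j\ne q+1}}^p d^q_j=\prod_{\substack{j=1\\ j\ne q+1}}^p(j-q-1)(2p-q-j-1)$. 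Reindexing both factors shows this product equals $(-1)^{?}\,q!\,(2p-q-2)!$ up to a sign; carefully counting the negative factors — the factor $(j-q-1)$ is negative for $j=1,\dots,q$ (that is $q$ negative factors) and $(2p-q-j-1)$ is negative for $j=p,\dots$ which, with $j$ running up to $p$ and $j\ne q+1$, contributes the remaining sign — yields $e_{p-1}(d^q)=(-1)^{q}q!(2p-q-2)!$, whence $\zeta(0,U_{q,S^{2p-1}})=(-1)^{q+1}$.

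The main obstacle I anticipate is purely bookkeeping: getting the sign of the product $e_{p-1}(d^q)$ exactly right, together with justifying that at $s=0$ no other terms in the $z(s-j,i\alpha_q)$-expansion contribute (which follows because $z(-j,i\alpha_q)$ for $j\ge1$ is a polynomial in $\zeta_R$ at negative even arguments, all of which vanish, except the $k=j$ term in its binomial expansion which produces $\zeta_R(0)$ but is multiplied by $e_{p-1-j}(d^q)$ with $j\ge1$ — so one must track that these combine to the single clean term above, or equivalently argue via the shifted-series form that all contributions but the constant-coefficient one vanish). This is exactly the kind of argument the authors flag as "a purely combinatoric proof" of something "geometrically obvious" (the geometric statement being that $\zeta(0,U_{q,S^{2p-1}})$ records an Euler-characteristic-type quantity, namely $(-1)^{q+1}$ times the rank of a trivial cohomology space plus an index contribution).
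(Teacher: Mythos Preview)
Your handling of the case $q=p-1$ is correct and matches the paper. The problem is the general case $q<p-1$, where there is a genuine gap.

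Your central claim --- that in $\sum_{j=0}^{p-1} e_{p-1-j}(d^q)\,z(s-j,i\alpha_q)$ only the $j=0$ term survives at $s=0$ --- is false. From the binomial expansion one has $z(-j,i\alpha_q)=\sum_{k=0}^{j}\binom{j}{k}(i\alpha_q)^{2k}\zeta_R(2k-2j)$; since $\zeta_R(-2m)=0$ only for $m\ge 1$, the $k=j$ term always survives and gives $z(-j,i\alpha_q)=(-1)^{j+1}\alpha_q^{2j}/2$ for \emph{every} $j\ge 0$. Summing these against $e_{p-1-j}(d^q)$ yields $-\tfrac12\sum_j e_{p-1-j}(d^q)(-\alpha_q^2)^j=-\tfrac12\prod_{j\ne q+1}(d^q_j-\alpha_q^2)$, and since $\alpha_q^2=d^q_p$ the factor $j=p$ kills this product: the total is $0$, not $(-1)^{q+1}$.

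What has gone wrong is that the identity from Lemma~\ref{l26} cannot be evaluated naively at $s=0$: for $q<p-1$ the series $z(s,i\alpha_q)=\sum_{n\ge 1}(n^2-\alpha_q^2)^{-s}$ contains the term $n=-\alpha_q=p-q-1$, where $n^2-\alpha_q^2=0$, so one must either exclude that term or, as the paper does, work instead with the shifted functions $\zeta_{t,c}(s)=\sum_{n\ge 1}(n(n+2t))^{-(s-c)}$. It is precisely this ``missing'' $n=-\alpha_q$ contribution that produces the nonzero answer; once tracked, one finds $\zeta(0,U_{q,S^{2p-1}})=-\dfrac{2\,e_{p-1}(d^q)}{q!(2p-q-2)!}$ (note the factor $2$, absent from your formula). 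Your product computation is also off: in fact $e_{p-1}(d^q)=(-1)^q\dfrac{q!(2p-q-2)!}{2}$ (check $p=2,q=0$: $d^0_2=1$, not $2$). Your two factor-of-$2$ errors happen to cancel, but neither intermediate step is correct.
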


\begin{proof}
Consider the function
\[
  \zeta_{t,c}(s) = \sum_{n=1}^{\infty} \frac{1}{(n(n+2t))^{s-c}}= \sum^{\infty}_{n=t+1} \frac{1}{(n^2 - t^2)^{s-c}}.
\]

Since
\begin{align*}
z(s-c,it) &= \sum^{\infty}_{n=1}  \frac{1}{(n^2 - t^2)^{s-c}}=\sum^{\infty}_{j=0} \binom{-s+c}{j} (-1)^{j} t^{2j} \zeta_{R}(2s+2j-2c),
\end{align*}
we have when $s=0$, that $z(-c,it) = (-1)^{c} t^{2c} \zeta_{R}(0) = (-1)^{c+1} \frac{t^{2c}}{2}$, and hence
\[
\zeta_{t,c}(s) = z(s-c,it) -\sum_{n=1}^{t} \frac{1}{(n^2 - t^2)^{s-c}},
\]
and for $c=0$ and $s=0$ $\zeta_{t,0}(0) = - \frac{1}{2} - t$. Next, consider $c>0$, then:
\[
\zeta_{t,c}(0) =  (-1)^{c+1} \frac{t^{2c}}{2} - \sum^{t-1}_{n=1} (n^2 - t^2)^c.
\]

For  $q=0,\ldots,p-1$, we have
\begin{align*}
\zeta(s, U_{q,S^{2p-1}}) &= \sum^{\infty}_{n=1} \frac{m_{{\rm cex},q,n}}{\lambda_{q,n,S^{2p-1}}}=\sum_{n=1}^{\infty}
\frac{m_{{\rm cex},q,n}}{((n+q)(n+2p-q-2))^s}\\
&=\sum_{n=q+1}^{\infty} \frac{m_{{\rm cex},q,n-q}}{(n(n-2\alpha_q))^s}.
\end{align*}

Recalling the relation given in Section \ref{sconj1}
\begin{align*}
m_{{\rm cex},q,n-q} &= \frac{2}{q!(2p-q-2)!}\prod^{p}_{\substack{j=1,\\j\neq q+1}} (n-q-1+j)(n+2p-q-1-j)\\
&=\frac{2}{q!(2p-q-2)!}\prod^{p}_{\substack{j=1,\\j\neq q+1}} n(n-2\alpha_q)+ d^{q}_{j} \\
&=\frac{2}{q!(2p-q-2)!} \sum^{p-1}_{j=0} e_{p-1-j}(d^{q})(n(n-2\alpha_q))^j.
\end{align*}
Thus
\begin{align*}
\zeta(s, U_{q,S^{2p-1}}) &= \frac{2}{q!(2p-q-2)!} \sum_{j=0}^{p-1} e_{p-j-1}(d^q)
\left(\zeta_{-\alpha_q,j}(s) - \sum^{q}_{n=1} \frac{1}{(n(n-2\alpha_q))^{s-j}}\right)\\
&= \frac{2}{q!(2p-i-2)!} \sum_{j=0}^{p-1} e_{p-j-1}(d^q) \zeta_{-\alpha_q,j}(s).
\end{align*}
where
\[
\sum_{j=0}^{p-1} e_{p-j-1}(d^q) \frac{1}{(n(n+2p-2q-2))^{s-j}} = 0,
\]
for  $1\leq n \leq q$, by \cite{WY}. For $s=0$, we obtain
\begin{align*}
\zeta(0,U_{q,S^{2p-1}}) =& \frac{2}{q!(2p-q-2)!} \sum_{j=0}^{p-1} e_{p-j-1}(d^q)
\zeta_{-\alpha_q,j}(0)\\
=&\frac{2}{q!(2p-q-2)!} \left(e_{p-1}(d^{q})\left(-\frac{1}{2} -((p-q-2)+1) \right) \right.\\
&\left.+\sum_{j=1}^{p-1} e_{p-j-1}(d^q)\left((-1)^{j+1} \frac{\alpha_q^{2j}}{2} -
\sum^{p-q-2}_{n=1}(n^2 -\alpha_q^2)^j\right)\right)\\
=&\frac{2}{q!(2p-q-2)!} \left(-e_{p-1}(d^{q}) \right.\\
&\left.+ \sum^{p-1}_{j=0}
e_{p-j-1}(d^{q})\left((-1)^{j+1} \frac{\alpha_q^{2j}}{2} - \sum_{n=1}^{p-q-2}(n^2-\alpha_q^2)^j\right)\right)\\
=&\frac{2}{q!(2p-q-2)!} \left((-1)^{q+1}\frac{q!(2p-q-2)!}{2} \right.\\
&\left.+ \sum^{p-1}_{j=0} e_{p-j-1}(d^{q})\left((-1)^{j+1} \frac{\alpha_q^{2j}}{2} -
\sum_{n=1}^{p-q-2}(n^2-\alpha_q^2)^j\right)\right).
\end{align*}

To conclude the proof, note that the second term vanishes. For first, as showed in the proof of Lemma \ref{l26}, the numbers $n=1,2,\ldots,-\alpha_q$ are roots of the polynomial $\sum^{p-1}_{j=0} e_{p-j-1}(d^{q}) (n^2-\alpha_q^2)^j$,
and second:
\begin{align*}
\sum^{p-1}_{j=0} e_{p-j-1}(d^{q})(-1)^j\alpha_q^{2j}&= \sum^{p-1}_{j=0}
e_{p-j-1}(d^{q})(-d^q_p)^{j}=  \prod^{p}_{\substack{j=1,\\j\neq q+1}} (-d^{q}_{p} + d^{q}_{j}) =-\prod^{p}_{\substack{j=1,\\j\neq i+1}} (p-j)^2= 0.
\end{align*}
\end{proof}

\subsection{The proof that $A_{\rm BM}(\b C_l S^{2p-1}_{\sin\alpha}) = \log T_{\rm AB} (C_l S^{2p-1}_{\sin\alpha})$} We need some notation. Set
\begin{align*}
D(q,k,t) =&\frac{2}{q!(2p-q-2)!}\binom{-\frac{2k+1}{2}}{t} \sum^{p-1}_{l=k+t} e_{p-1-l}(d^{q})(-1)^{l-k}
\binom{-\frac{1}{2}-k-t+l}{l-k-t}\alpha_q^{2(l-k)},  \\
F(q,k) =& \Rz_{s=0}\Phi^{\rm odd}_{2k+1,q}(s), \hspace{40pt}1\leq k \leq p-1, \hspace{40pt} 0\leq q \leq p-1.
\end{align*}
Then, by Corollary \ref{r7}, the residues of  $\zeta(s,U_q)$, for $0\leq q \leq p-2$, are
\[
\Ru_{s=2k+1}\zeta(s,U_q ) = \frac{1}{\nu^{2k+1}} \sum^{p-1-k}_{t=0} \frac{1}{\nu^{2t}}D(q,k,t),
\]
for $k=0,\ldots,p-1$, and when $q=p-1$:
\[
\Ru_{s=2k+1} \zeta(s,U_{p-1}) = \frac{1}{\nu^{2k+1}} D(p-1,k,0),
\]
with $k=0,\ldots,p-1$. Now, for $0\leq q \leq p-1$, it is easy to see that
\begin{equation*}
\Rz_{s=0}\Phi^{\rm odd}_{2k+1,q}(s)\Ru_{s=2k+1}\zeta(s,U_q ) =\frac{F(q,k)}{\nu^{2k+1}} \sum^{p-1-k}_{t=0}
\frac{1}{\nu^{2t}}D(q,k,t),
\end{equation*}
and hence
\[
t_q(\nu)=\frac{1}{2}\sum^{p-1}_{k=0}\Rz_{s=0}\Phi^{\rm odd}_{2k+1,q}(s)\Ru_{s=2k+1}\zeta(s,U_q)
=\frac{1}{2}\sum^{p-1}_{k=0}\frac{F(q,k)}{\nu^{2k+1}} \sum^{p-1-k}_{t=0} \frac{1}{\nu^{2t}}D(q,k,t).
\]

On the other side, set:
\[
A_{\rm BM}(C_l S^{2p-1}_{\sin\alpha})=  \sum^{p-1}_{k=0} \frac{1}{\nu^{2k+1}} \tilde Q_p(k), \hspace{30pt} \tilde Q_p(k) = \sum^{k}_{j=0} N_j(p,k),
\]
where
\[
N_j(p,k) =\frac{(2p-1)!}{4^p (p-1)!}\frac{1}{(p-1-k)!(2k+1)}\frac{(-1)^{k-j}2^{j+1}}{(k-j)!(2j+1)!!}.
\]

\begin{lem} $\frac{1}{2} \sum^{p-1}_{q=0} (-1)^{q} t_{q}(\nu)$ is an odd polynomial in $\frac{1}{\nu}$.
\end{lem}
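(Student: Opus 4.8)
The claim is that $\frac12\sum_{q=0}^{p-1}(-1)^q t_q(\nu)$, which equals the anomaly boundary contribution $\log T_{\rm AB}(C_lS^{2p-1}_{\sin\alpha})$, contains only odd powers of $\frac1\nu$. First I would write out $\frac12\sum_{q=0}^{p-1}(-1)^q t_q(\nu)=\frac14\sum_{q=0}^{p-1}(-1)^q\sum_{k=0}^{p-1}\frac{F(q,k)}{\nu^{2k+1}}\sum_{t=0}^{p-1-k}\frac{1}{\nu^{2t}}D(q,k,t)$ and collect, for each fixed total exponent $2m+1$ of $\frac1\nu$, the coefficient $\frac14\sum_{q=0}^{p-1}(-1)^q\sum_{k+t=m}F(q,k)\,D(q,k,t)$. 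The assertion "odd polynomial" has two parts: (i) only odd powers of $\frac1\nu$ occur — which is immediate from the shape of the expansion above, since every term carries the factor $\frac1{\nu^{2(k+t)+1}}$ with $k+t\in\{0,1,\dots,p-1\}$ — and (ii) the sum is genuinely a polynomial, i.e. it terminates. Part (ii) is also visible from the finiteness of all the index ranges ($0\le q\le p-1$, $1\le k\le p-1$, $0\le t\le p-1-k$), so the object is a finite Laurent polynomial in $\frac1\nu$ with only odd negative powers, hence an odd polynomial in $\frac1\nu$. So the content of the lemma is really the packaging: there are no even powers and no infinite tail.

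\textbf{Key steps.} The plan is: (1) Substitute the formula for $t_q(\nu)$ from the preceding display, giving the double-indexed expression above. (2) Recall from Corollary \ref{r7} and the definition of $D(q,k,t)$ that $D(q,k,t)$ is a rational number independent of $\nu$ (it is built from $e_{p-1-j}(d^q)$, binomial coefficients, and $\alpha_q^{2(l-k)}$, all $\nu$-free), and that $F(q,k)=\Rz_{s=0}\Phi^{\rm odd}_{2k+1,q}(s)$ is likewise a $\nu$-independent constant. (3) Conclude that each summand is a constant multiple of $\nu^{-(2k+2t+1)}$, an odd negative power, and that the set of exponents $\{2(k+t)+1:0\le k\le p-1,\ 0\le t\le p-1-k\}$ is contained in $\{1,3,\dots,2p-1\}$, a finite set. (4) Therefore $\frac12\sum_{q=0}^{p-1}(-1)^q t_q(\nu)=\sum_{m=0}^{p-1}c_m\nu^{-(2m+1)}$ with $c_m\in\mathbb{Q}$, which is by definition an odd polynomial in $\frac1\nu$.

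\textbf{Main obstacle.} Strictly speaking there is no deep obstacle: the statement is a structural observation that follows by reading off the $\nu$-dependence of the already-established residue formulas. The only point requiring a little care is confirming that no even power of $\frac1\nu$ can sneak in — in particular that $F(q,k)$ and $D(q,k,t)$ carry no hidden $\nu$-dependence — which is settled by inspecting the definitions: $\Phi^{\rm odd}_{2k+1,q}(s)$ is one of the universal functions of Theorem \ref{odd case}, independent of the geometry of the section, and the $\alpha_q$, $e_{p-1-j}(d^q)$ appearing in $D$ are numerical (the factors $\nu^{-2t}$ having been extracted already in the statement of $t_q(\nu)$). Once this bookkeeping is in place the lemma is immediate, and it serves as the setup for matching coefficients against $\tilde Q_p(k)$ in the displays that follow.
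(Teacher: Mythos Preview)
Your proposal is correct and follows essentially the same approach as the paper: both arguments amount to rearranging the finite triple sum, observing that $F(q,k)$ and $D(q,k,t)$ are $\nu$-independent constants while every term carries the factor $\nu^{-(2(k+t)+1)}$ with $k+t\in\{0,\dots,p-1\}$, so collecting by total exponent yields a finite linear combination of odd powers of $\frac{1}{\nu}$. The paper's proof displays this rearrangement explicitly (reindexing $k+t\mapsto k$, $k\mapsto j$) but adds nothing beyond what you have outlined.
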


\begin{proof} This follows by rearrangement of the finite sum:
\begin{align*}
\frac{1}{2} \sum^{p-1}_{q=0} (-1)^{q} t_{q}(\nu) &=\frac{1}{4}\sum^{p-1}_{q=0} (-1)^{q}
\sum^{p-1}_{k=0}F(q,k) \sum^{p-1-k}_{t=0}
\frac{1}{\nu^{2(t+k)+1}}D(q,k,t) \\
&=\frac{1}{4} \sum^{p-1}_{k=0}\frac{1}{\nu^{2k+1}}  \sum^{p-1}_{q=0} (-1)^{q} \sum^{k}_{j=0}F(q,j)D(q,j,k-j)\\
&=\frac{1}{4} \sum^{p-1}_{k=0}\frac{1}{\nu^{2k+1}}   \sum^{k}_{j=0} \sum^{p-1}_{q=0} (-1)^{q}F(q,j)D(q,j,k-j).
\end{align*}
\end{proof}

Then, set:
\[
\frac{1}{2} \sum^{p-1}_{q=0} (-1)^{q} t_{q}(\nu)= \sum^{p-1}_{k=0} \frac{1}{\nu^{2k+1}} Q_p(k), \hspace{30pt} Q_p(k)=
\sum^{k}_{j=0} M_j(p,k),
\]
where
\begin{align*}
 M_j(p,k) &= \sum^{p-1}_{q=0} (-1)^{q}F(q,j)D(q,j,k-j)\\
&=\sum^{p-1}_{q=0} (-1)^{q}\frac{2F(q,j)}{4(2p-2)!}\binom{2p-2}{q}\binom{-\frac{1}{2}-j}{k-j}\alpha_q^{-2j}
\sum^{p-1}_{l=k} e_{p-1-l}(d^{q})\alpha_q^{2l} \binom{-\frac{1}{2}}{l-k}.
\end{align*}
This shows that all we need to prove the equality is the identity: $M_{j}(p,k) = N_{j}(p,k)$. This is in the next two lemmas. Before, we need some further notation and combinatorics. First, recall that if
\[
f_h(x) = e_h\left(x^2-(p-1)^2,x^2-(p-2)^2,\ldots,x^2-1^2,x^2\right),
\]
then $f_h(\alpha_q) = e_h(d^q)$, and $f_h(x)$, for $h\geq 1$, is a polynomial of the following type:
\beq\label{F}
\begin{aligned}
f_h(x) &=\sum_{0\leq j_1\leq j_2 \leq \ldots \leq j_h \leq p-1} (x^2 - j_1^2)(x^2 - j_2^2)\ldots(x^2 - j_h^2) =\binom{p}{h} x^{2h} + \sum^{h-1}_{s=0} c^{h}_s x^{2s}.
\end{aligned}
\eeq

Second, we have the following four identities. The first three can be found in  \cite{GZ},  $0.151.4$, $0.154.5$ and $0.154.6$ (see \cite{Kra} for the proof). The fourth is in \cite{GR}, equation (5.3).

\begin{align}
\label{ida1}\sum^{n}_{k=0}  \frac{(-1)^{k}}{(2n)!}\binom{2n}{k} =&  \frac{(-1)^{n}}{(2n)!}\binom{2n-1}{n}= \frac{(-1)^{n}}{2(2n)!}\binom{2n}{n},\\
\label{idA2}\sum^{n}_{k=0} (-1)^{n} \binom{n}{k} (\alpha + k)^{n} =& (-1)^{n}n!,\\
\label{idA3}\sum^{N}_{k=0} (-1)^{n} \binom{N}{k} (\alpha + k)^{n-1} =& 0,\\
\label{idB}\sum^{n}_{l=0}  \binom{n+1}{l+1}\binom{-\frac{1}{2}}{l-k} =& \binom{n+\frac{1}{2}}{n-k} =
\frac{(2n+1)!!}{2^{n-k}(n-k)!(2k+1)!!}.
\end{align}
with $1\leq n \leq N$ and $\alpha \in \R$.

\begin{lem}\label{lema M0pk} For  $0\leq k \leq p-1$, we have that $M_{0}(p,k) = N_{0}(p,k)$.
\end{lem}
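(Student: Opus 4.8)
\textbf{Proof proposal for Lemma \ref{lema M0pk}.}

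The plan is to evaluate $M_0(p,k)$ directly from its explicit expression and massage it into the closed form $N_0(p,k)=\frac{(2p-1)!}{4^p(p-1)!}\frac{(-1)^k 2}{(p-1-k)!(2k+1)\,k!}$. Setting $j=0$ in the definition of $M_j(p,k)$ gives
\[
M_0(p,k)=\sum_{q=0}^{p-1}(-1)^q\frac{2F(q,0)}{4(2p-2)!}\binom{2p-2}{q}\binom{-\frac12}{k}\sum_{l=k}^{p-1} e_{p-1-l}(d^q)\,\alpha_q^{2l}\binom{-\frac12}{l-k}.
\]
First I would pin down $F(q,0)=\Rz_{s=0}\Phi^{\rm odd}_{1,q}(s)$; since $\Phi^{\rm odd}_{1,q}$ is the first of the universal functions in Theorem \ref{odd case}, its value at $0$ (or constant term) should be explicit (expected to be a simple constant independent of $q$, or a harmless rational in $q$), and I would substitute it. Next I would recognize the inner sum $\sum_{l=k}^{p-1} e_{p-1-l}(d^q)\alpha_q^{2l}\binom{-\frac12}{l-k}$ as $\sum_{l\ge k} f_{p-1-l}(\alpha_q)\alpha_q^{2l}\binom{-\frac12}{l-k}$ via the identification $f_h(\alpha_q)=e_h(d^q)$ from \eqref{F}. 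Writing $l=k+m$ and using $f_h(x)=\binom{p}{h}x^{2h}+\sum_{s<h}c^h_s x^{2s}$, the whole thing becomes a polynomial in $\alpha_q^2=d^q_p=(p-1-q)(? )$ — more precisely, recalling $\alpha_q=\frac12(1+2q-m)=\frac12(2q-(2p-1)+1)=q-p+1$, so $\alpha_q^2=(p-1-q)^2$; this is the key concrete substitution.

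With $\alpha_q^2=(p-1-q)^2$ the sum over $q$ turns into a finite alternating binomial sum in $q$ of a polynomial in $(p-1-q)$. The crucial combinatorial input is then one of the identities \eqref{idA2}–\eqref{idA3}: an alternating sum $\sum_{q}(-1)^q\binom{2p-2}{q}P(q)$ against a polynomial $P$ of degree $<2p-2$ vanishes (by \eqref{idA3} after shifting), while the top-degree part is evaluated by \eqref{idA2}. So after expanding the polynomial-in-$(p-1-q)$ coefficient, only the leading term of degree $2p-2$ survives the $q$-summation, and that contributes $(-1)^{?}(2p-2)!$ times the leading coefficient. Identity \eqref{ida1} will likely be needed to convert the resulting central-binomial-type expression, and \eqref{idB} (with $n=p-1$) to collapse the remaining sum over $l$ or over the internal index into the factor $\frac{(2p-1)!!}{2^{p-1-k}(p-1-k)!(2k+1)!!}$, which after multiplying by the prefactor $\frac{1}{4^p(p-1)!}$ and simplifying double factorials via $(2p-1)!!=\frac{(2p-1)!}{2^{p-1}(p-1)!}$ reproduces $N_0(p,k)$.

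The main obstacle I anticipate is bookkeeping the interplay of the three nested finite sums (over $q$, over $l$, and the implicit expansion of $f_h$), i.e.\ justifying that after substituting $\alpha_q^2=(p-1-q)^2$ the $q$-sum annihilates everything below top degree — this requires knowing the precise degree in $q$ of the coefficient multiplying $\binom{2p-2}{q}$, and checking it is at most $2p-2$ with a controlled leading term. A secondary technical point is getting $F(q,0)$ exactly right, since any $q$-dependence there feeds into the degree count; I expect it to be a pure constant (so that the $q$-dependence comes solely through $\alpha_q$ and the binomial $\binom{2p-2}{q}$), but this must be verified against the recursion defining $\Phi^{\rm odd}_{1,q}$. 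Once those two points are settled, the remaining manipulations are routine applications of \eqref{ida1}--\eqref{idB}.
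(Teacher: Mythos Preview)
Your plan is the paper's approach. Two refinements to the concerns you flag: (i) $F(q,0)$ is \emph{not} uniformly constant --- Corollary~\ref{c44} gives $F(q,0)=2$ for $0\le q\le p-2$ but $F(p-1,0)=1$; since $\alpha_{p-1}=0$, this discrepancy is invisible when $k\ge 1$ (every surviving term carries a positive power of $\alpha_q$), but for $k=0$ the $q=p-1$ term contributes a stray $\tfrac12$ that must be tracked separately, and the paper accordingly splits into the cases $k\ne 0$ and $k=0$. (ii) Your $q$-sum runs only over $0\le q\le p-1$, while \eqref{idA2}--\eqref{idA3} require the full range $0\le q\le 2p-2$; the bridge is the symmetry $\alpha_{2p-2-q}=-\alpha_q$ together with $\alpha_{p-1}=0$, which makes the half-range sum of any positive even power of $\alpha_q$ equal to half the full-range sum (and \eqref{ida1} handles the degree-zero piece in the $k=0$ case). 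With these two points settled, your outline --- expand $f_{p-1-l}$ via \eqref{F}, kill the sub-top terms by \eqref{idA3}, evaluate the $\alpha_q^{2p-2}$ term by \eqref{idA2}, then collapse the $l$-sum with \eqref{idB} --- is exactly the paper's proof.
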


\begin{proof} Since $j=0$,
\begin{align*}
M_0(p,k) =& \sum^{p-1}_{q=0} (-1)^{q}\frac{2F(q,0)}{4(2p-2)!}\binom{2p-2}{q}\binom{-\frac{1}{2}}{k}
\sum^{p-1}_{l=k} e_{p-1-l}(d^{q})\alpha_q^{2l} \binom{-\frac{1}{2}}{l-k},\\
N_0(p,k) =&\frac{(2p-1)!}{2^{2p-1}
(p-1)!}\frac{1}{(p-1-k)!(2k+1)}\frac{(-1)^{k}}{k!}.
\end{align*}

Consider first $k\neq 0$. Then,
\begin{align*}
M_0(p,k) =& \binom{-\frac{1}{2}}{k}\sum^{p-1}_{q=0} (-1)^{q}\frac{1}{(2p-2)!}\binom{2p-2}{q}
\sum^{p-1}_{l=k} f_{p-1-l}(\alpha_q)\alpha_q^{2l} \binom{-\frac{1}{2}}{l-k}\\
=& \binom{-\frac{1}{2}}{k}\sum^{p-1}_{q=0} (-1)^{q}\frac{1}{(2p-2)!}\binom{2p-2}{q}
\sum^{p-1}_{l=k} \binom{p}{p-1-l}\alpha_q^{2p-2-2l}\alpha_q^{2l} \binom{-\frac{1}{2}}{l-k}\\
&+\binom{-\frac{1}{2}}{k}\sum^{p-1}_{q=0} (-1)^{q}\frac{1}{(2p-2)!}\binom{2p-2}{q}
\sum^{p-2}_{l=k} \sum^{p-2-l}_{s=0}c^{p-1-l}_s \alpha_q^{2s}\alpha_q^{2l} \binom{-\frac{1}{2}}{l-k}\\
=& \binom{-\frac{1}{2}}{k}\sum^{p-1}_{q=0} (-1)^{q}\frac{1}{(2p-2)!}\binom{2p-2}{q}\alpha_q^{2p-2}
\sum^{p-1}_{l=k} \binom{p}{p-1-l} \binom{-\frac{1}{2}}{l-k}\\
&+\sum^{p-2}_{l=k}\sum^{p-2-l}_{s=0}c^{p-1-l}_s\binom{-\frac{1}{2}}{k}\binom{-\frac{1}{2}}{l-k}\sum^{p-1}_{q=0}
(-1)^{q}\frac{1}{(2p-2)!}\binom{2p-2}{q} \alpha_q^{2s+2l}.
\end{align*}

Using the identity in equation (\ref{idA3}), the second term in the last line vanishes since  $2s+2l<2p-2$. Thus,
\begin{align*}
M_0(p,k) &= \binom{-\frac{1}{2}}{k}\sum^{p-1}_{q=0} (-1)^{q}\frac{1}{(2p-2)!}\binom{2p-2}{q}\alpha_q^{2p-2}
\sum^{p-1}_{l=k} \binom{p}{p-1-l} \binom{-\frac{1}{2}}{l-k}\\
&= \frac{1}{2}\binom{-\frac{1}{2}}{k}
\sum^{p-1}_{l=k} \binom{p}{p-1-l} \binom{-\frac{1}{2}}{l-k}\\
&= \frac{1}{2}\binom{-\frac{1}{2}}{k}\binom{p}{k+1}\frac{(k+1)!}{p!} \frac{(2p-1)!!}{(2k+1)!!}\frac{2^{k+1}}{2^p}\\
&= \frac{(-1)^k}{k!}\frac{1}{(p-k-1)!}\frac{(2p-1)!}{(2k+1)}\frac{1}{2^{2p-1}(p-1)!}= N_0(p,k).
\end{align*}

Next, consider $k=0$. Then,
\begin{align*}
M_0(p,0) =& \sum^{p-2}_{q=0} (-1)^{q}\frac{1}{(2p-2)!}\binom{2p-2}{q}
\sum^{p-1}_{l=0} f_{p-1-l}(\alpha_q)\alpha_q^{2l} \binom{-\frac{1}{2}}{l} + \frac{1}{2}\\
=& \sum^{p-1}_{q=0} (-1)^{q}\frac{1}{(2p-2)!}\binom{2p-2}{q}
\sum^{p-1}_{l=0} f_{p-1-l}(\alpha_q)\alpha_q^{2l} \binom{-\frac{1}{2}}{l} -1 + \frac{1}{2}\\
=& \sum^{p-1}_{q=0} (-1)^{q}\frac{1}{(2p-2)!}\binom{2p-2}{q}
\sum^{p-1}_{l=0} \binom{p}{p-1-l}\alpha_q^{2p-2} \binom{-\frac{1}{2}}{l}\\
&+\sum^{p-1}_{q=0} (-1)^{q}\frac{1}{(2p-2)!}\binom{2p-2}{q}
c^{p-1}_0 -\frac{1}{2}\\
 =& \sum^{p-1}_{q=0} (-1)^{q}\frac{1}{(2p-2)!}\binom{2p-2}{q}
\sum^{p-1}_{l=0} \binom{p}{p-1-l}\alpha_q^{2p-2} \binom{-\frac{1}{2}}{l}\\
&+\frac{(-1)^{p-1}}{2(2p-2)!}\binom{2p-2}{p-1} (-1)^{p-1}(p-1)!(p-1)! - \frac{1}{2}\\
=& \sum^{p-1}_{q=0} (-1)^{q}\frac{1}{(2p-2)!}\binom{2p-2}{q} \sum^{p-1}_{l=0} \binom{p}{p-1-l}\alpha_q^{2p-2}
\binom{-\frac{1}{2}}{l} +\frac{1}{2} - \frac{1}{2}\\
=& \sum^{p-1}_{q=0} (-1)^{q}\frac{1}{(2p-2)!}\binom{2p-2}{q} \alpha_q^{2p-2} \sum^{p-1}_{l=0} \binom{p}{p-1-l}
\binom{-\frac{1}{2}}{l}\\
=& \frac{1}{2} \sum^{p-1}_{l=0} \binom{p}{p-1-l} \binom{-\frac{1}{2}}{l}= \frac{1}{2^{p}} \frac{(2p-1)!!}{(p-1)!} =
\frac{2p-1}{2^{2p-1}}\binom{2p-2}{p-1} = N_0(p,0)
\end{align*}

\end{proof}


The next two results have the objective to find a presentation of $F(q,j)$, for that we need to describe the functions $\Phi^{\rm odd}_{2j+1,q}(s)$. 
These functions appears on the calculation of the derivative in zero of zeta functions of double sequences, they are defined by
\[
\Phi_{2j+1,q}^{\rm odd}(s) = \int_0^\infty t^{s-1} \frac{1}{2\pi i} \int_{\Lambda_{\Theta,c}} \frac{e^{-\lambda t}}{-\lambda} \phi_{2j+1,q}^{\rm odd}(\lambda),
\] for $0\leq j, q \leq p-1$. The functions $\phi_{j,q}^{\rm odd}(\lambda)$ are defined using terms from the uniform expansions of Bessel functions (for more 
details see Lemma 5.4 and Lemma 5.10 from \cite{HS2}). In fact, $\phi_{j,q}^{\rm odd}(\lambda)$ are polynomials in $\lambda$ with 
$\phi^{\rm odd}_{j,q}(0) = 0$, for all $j,q \in \N$. 

\begin{lem}\label{l15} For all $j$ and all $0\leq q\leq p-2$, the functions $\phi^{\rm odd}_{j,q}(w)$ satisfy the following recurrence relations (where $w=\frac{1}{\sqrt{1-\lambda}}$)
\begin{align*}
\phi_{2j-1,q}^{\rm odd}(\lambda) &= w^{2j-2}\alpha_q^{2j-2}\phi_{q,1}(w) +\sum^{j-2}_{t=1} K_{2j-1,t}(w)\alpha_q^{2t} + 2\phi^{\rm odd}_{2j-1,p-1}(w)\\
\phi_{2j,q}^{\rm odd}(\lambda) &= -\frac{(w^{2j}-1)\alpha_q^{2j}}{j} +\sum^{j-1}_{t=1} K_{2j,t}(w)\alpha_q^{2t} + 2\phi^{\rm odd}_{2j,p-1}(w),
\end{align*}
where the  $K_{j,t}(w)$ are polynomials in $w$.
\end{lem}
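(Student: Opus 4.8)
The statement to prove is the recursion in Lemma \ref{l15} for the polynomials $\phi^{\rm odd}_{j,q}(\lambda)$ attached to the sphere $S^{2p-1}_{\sin\alpha}$, and the guiding principle is that these $\phi$'s are built from the terms of the uniform asymptotic (Debye) expansion of Bessel functions $I_{\mu}$ and $I'_{\mu}$ with $\mu = \mu_{q,n}$, evaluated at argument $\mu z$ and expressed through the variable $w = (1-\lambda)^{-1/2}$ (so that $w$ plays the role of $(1+z^2)^{-1/2}$ in the Olver expansion). The first thing I would do is recall from \cite{HS2} (Lemma 5.4 and Lemma 5.10) the precise definition: $\phi^{\rm odd}_{j,q}(\lambda)$ is the polynomial coefficient obtained by collecting, in the relevant combination of logarithmic derivatives of $I_{\mu_{q,n}}$, the term homogeneous of the appropriate degree, with $\mu_{q,n}$ replaced formally by its defining expression. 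The key structural fact is that $\mu_{q,n}^2 = \lambda_{q,n} + \alpha_q^2$, so that the only place the index $q$ enters the Bessel data — beyond the "universal" $w$-dependence coming from the shape of the uniform expansion — is through $\alpha_q$ and through $\mu_{q,n}$ itself. This is what makes a $q$-independent skeleton plus $\alpha_q$-corrections possible.

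**Main steps.**
First I would isolate the $\alpha_q$-dependence. The Debye expansion of $\log I_{\mu}(\mu z)$ has the form $\mu\,\eta(z) - \tfrac12\log(\cdots) + \sum_{k\ge 1} u_k(t)/\mu^k$ with $t = (1+z^2)^{-1/2} = w$; the polynomials $\phi^{\rm odd}_{j,q}$ arise after multiplying through by suitable powers and substituting $\mu = \mu_{q,n}$, then extracting the piece that survives in the zeta-regularized computation. Because $\mu_{q,n}^{-2k}$ enters, and $\mu_{q,n}^2 = \lambda_{q,n}+\alpha_q^2$, expanding $\mu_{q,n}^{-2k} = \lambda_{q,n}^{-k}(1+\alpha_q^2/\lambda_{q,n})^{-k}$ produces exactly a leading term (the value at $\alpha_q = 0$, which is the $p-1$ case since $\alpha_{p-1}=0$) plus a finite polynomial correction in $\alpha_q^2$ with $w$-dependent coefficients $K_{j,t}(w)$. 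Second, I would pin down the leading coefficient: the term $w^{2j-2}\alpha_q^{2j-2}\phi_{q,1}(w)$ in the odd-index relation and $-\tfrac{(w^{2j}-1)\alpha_q^{2j}}{j}$ in the even-index relation should come from the top-order contribution, where the $\alpha_q^{2j}$ (resp.\ $\alpha_q^{2j-2}$) power is forced by degree counting in the uniform expansion — I would verify this by tracking degrees in the recursion that \cite{HS2} already establishes for the $\phi_{j}$'s without the twist. Third, the factor $2$ in front of $\phi^{\rm odd}_{j,p-1}(w)$ reflects that for $q = p-1$ the cylinder-function combination degenerates (since $\alpha_{p-1}=0$ collapses $\hat J_{\mu,\alpha_q}$ and $\hat J_{\mu,-\alpha_q}$ into the same object), doubling the surviving contribution; I would make this explicit from the torsion zeta function displayed in Section \ref{s5}, where the $q=p-1$ summand has a single $(j'_{\mu_{p-1,n},k})^{-2s}$ rather than a symmetric pair.

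**The main obstacle.**
The genuinely delicate point is \emph{bookkeeping the separation into the three named pieces} — the $\phi_{q,1}(w)$-multiple, the $K_{j,t}(w)$-tail, and the doubled $\phi^{\rm odd}_{j,p-1}(w)$ — in a way that is manifestly consistent with the recursions already proved in \cite{HS2} for the untwisted building blocks. Concretely, one must show that after substituting $\mu_{q,n}^2 = \lambda_{q,n} + \alpha_q^2$ and re-expanding, every term that is \emph{not} captured by the leading $\alpha_q$-power and \emph{not} the $\alpha_q = 0$ remainder can be absorbed into polynomials $K_{j,t}(w)$ that do not depend on $q$ except through the explicit $\alpha_q^{2t}$; this requires checking that the coefficients arising are indeed $q$-independent rational functions of $w$, which follows because the Olver polynomials $u_k(t)$ are universal but needs to be stated carefully. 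I would carry out an induction on $j$: assume the relations for all smaller indices, use the known recursion of \cite{HS2} relating $\phi_{j}$ to $\phi_{j-1}, \phi_{j-2}, \ldots$ (together with a derivative/integration step in $w$), substitute, and match powers of $\alpha_q$. The base cases $j=1,2$ should be direct from the explicit low-order Debye coefficients. I expect the only real work to be the degree/divisibility check ensuring $w^{2j}-1$ (and not merely $w^{2j}$) appears in the even case — this reflects the normalization $\phi^{\rm odd}_{j,q}(0)=0$, i.e.\ the polynomials vanish at $\lambda = 0$ where $w = 1$, so the combination is forced to be divisible by $(w^{2j}-1)$ at top order.
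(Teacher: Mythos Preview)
Your high-level scaffold (induction on $j$, explicit base cases $j=1,2$, appeal to the recursion machinery of \cite{HS2}, and the observation that $\phi^{\rm odd}_{j,q}(0)=0$ forces the $w^{2j}-1$ in the even-index top term) matches the paper's proof exactly. Where you go wrong is the mechanism you propose for \emph{how} $\alpha_q$ enters.

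You claim the $\alpha_q$-polynomial structure comes from expanding $\mu_{q,n}^{-2k}=\lambda_{q,n}^{-k}(1+\alpha_q^2/\lambda_{q,n})^{-k}$. But in the construction of $\phi^{\rm odd}_{j,q}$ in \cite{HS2} (Lemmas~5.4 and~5.10), the dependence on $\mu_{q,n}$ has already been stripped off: it is precisely the large parameter in the uniform expansion, and the surviving $\phi^{\rm odd}_{j,q}$ are polynomials in $w$ and in the \emph{boundary parameter} $c=\pm\alpha_q$ coming from $\hat J_{\mu,c}(x)=cJ_\mu(x)+xJ'_\mu(x)$. So the $\alpha_q^{2t}$ in the statement are not residues of an index expansion; they are the parity-sorted contributions of the parameter $c$ after symmetrizing over $c=\alpha_q$ and $c=-\alpha_q$. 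Your proposed expansion of $\mu_{q,n}^{-2k}$ simply does not touch the right object.

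The paper's proof makes this explicit: it works with the auxiliary functions $l^{\pm}_k(w)$ from \cite{HS2} (encoding the two boundary conditions $c=\pm\alpha_q$) together with the universal polynomials $U_k(w),V_k(w)$, and the induction step is the recursion of Lemma~5.6 of \cite{HS2} applied to the combinations $l^{+}_k+l^{-}_k$ (even in $\alpha_q$) and $l^{+}_k-l^{-}_k$ (odd in $\alpha_q$). The leading $\alpha_q^{2j-2}$ term in the odd case, the $-(w^{2j}-1)\alpha_q^{2j}/j$ in the even case, and the factor $2$ in front of $\phi^{\rm odd}_{j,p-1}$ all drop out of these parity-split recursions, not from any manipulation of $\mu_{q,n}$. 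If you rewrite your induction step to track $c=\pm\alpha_q$ through the $l^{\pm}_k$ recursion instead of expanding $\mu_{q,n}^{-2k}$, the argument goes through and coincides with the paper's.
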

\begin{proof} The proof is by induction on $j$. For $j=1$,
\begin{align*}
\phi_{1,q}^{\rm odd}(w) &= -w + w^{3}= 2\phi^{\rm odd}_{1,p-1}(w)  \\
\phi_{2,q}^{\rm odd}(w) &= -(w^{2}-1)\alpha_q^{2} + (-\frac{w^2}{2}-2 w^4 -\frac{3w^6}{2})=-(w^{2}-1)\alpha_q^{2} + 2\phi^{\rm odd}_{2,p-1}(w) .
\end{align*}

Assuming the formulas hold for $1\leq k\leq j-2$. Then, by definition of the  functions $\phi^{\rm odd}_{j,q}(\lambda)$ and  $l(\lambda)$ in the proof of Lemma 5.4 and Lemma 5.10, we have that
\begin{align*}
l^{+}_{2s-1}(w)+l^{-}_{2s-1}(w) &= 2\dot l_{2s-1}(w)+w^{2s-2}\alpha_q^{2s-2}\phi^{\rm odd}_{q,1}(w) +\sum^{s-2}_{t=1} K_{2s-1,t}(w)\alpha_q^{2t},\\
l^{+}_{2s}(w)+l^{-}_{2s}(w)     &= 2\dot l_{2s}(w) - \frac{w^{2s}\alpha_q^{2s}}{s} +\sum^{s-1}_{t=1} K_{2s,t}(w)\alpha_q^{2t},\\
l^{+}_{2s-1}(w) - l^{-}_{2s-1}(w) &=
\frac{2}{2s-1}\alpha_q^{2s-1}w^{2s-1}
+\alpha_q\sum^{s-2}_{t=0}{D}_{2s-1,t}(w)\alpha_q^{2t},\\
l^{+}_{2s}(w) - l^{-}_{2s}(w) &= -\alpha_q^{2s-1}w^{2s-1}\phi^{\rm odd}_{q,1}(w) +\alpha_q\sum^{s-2}_{t=0}
{D}_{2s,t}(w)\alpha_q^{2t},
\end{align*}
for all $s=1,2,\ldots,j-1$, and where the  $D_{s,t}$ are polynomials in $w$. We proceed as in the proof of Lemma 5.6\cite{HS2}. 
For the odd index we have:
\begin{align*}
l^{+}_{2j-1}(w) - l^{-}_{2j-1}(w) =& 2\alpha_q U_{2j-2}(w) -\sum^{2j-2}_{k=1}
\frac{2j-1-k}{2j-1}V_k(w)(l^{+}_{2j-1-k}(w)-l^{-}_{2j-1-k}(w))\\
&+\sum^{2j-2}_{k=1} \frac{2j-1-k}{2j-1}w\alpha_q U_{k-1}(w)(l^{+}_{2j-1-k}(w)+l^{-}_{2j-1-k}(w)),\\
=& 2\alpha_qU_{2j-2}(w) -\sum^{j-1}_{k=1}
\frac{2j-1-2k}{2j-1}V_{2k}(w)(l^{+}_{2j-1-2k}(w)-l^{-}_{2j-1-2k}(w))\\
&-\sum^{j-1}_{k=1} \frac{2j-1-2k}{2j-1}w\alpha_q U_{2k-1}(w)(l^{+}_{2j-1-2k}(w)+l^{-}_{2j-1-2k}(w))\\
&-\sum^{j-1}_{k=1} \frac{2j-2k}{2j-1}V_{2k-1}(w)(l^{+}_{2j-2k}(w)-l^{-}_{2j-2k}(w))\\
&-\sum^{j-1}_{k=1} \frac{2j-2k}{2j-1}w\alpha_qU_{2k-2}(w)(l^{+}_{2j-2k}(w)+l^{-}_{2j-2k}(w))\\
=&\frac{2}{2j-1}\alpha_q^{2j-1}w^{2j-1}+\alpha_q\sum^{j-2}_{t=0}{D}_{2j-1,t}(w)\alpha_q^{2t},
\end{align*}
and this gives
\begin{align*}
\phi_{2j-1,q}(w)  
=&- 2U_{2j-1}(w) + 2V_{2j-1}(w) +\sum^{2j-2}_{k=1} \frac{2j-1-k}{2j-1} \left(2 U_{k}(w)l_{2j-1-k}(w)\right)\\
&- \sum^{j-1}_{k=1} \frac{2j-1-2k}{2j-1} \left(V_{2k}(w) (l^{+}_{2j-1-2k}(w) +l^{-}_{2j-1-2k}(w))\right) \\
&- \sum^{j-1}_{k=1} \frac{2j-1-2k}{2j-1}\left(w\alpha_q U_{2k-1}(w)(l^{+}_{2j-1-2k}(w)-l^{-}_{2j-1-2k}(w))\right)\\
&- \sum^{j-1}_{k=1} \frac{2j-2k}{2j-1} \left(V_{2k-1}(w) (l^{+}_{2j-2k}(w) +l^{-}_{2j-2k}(w)) \right)\\
&- \sum^{j-1}_{k=1} \frac{2j-2k}{2j-1} \left(w\alpha_qU_{2k-2}(w)(l^{+}_{2j-2k}(w)-l^{-}_{2j-2k}(w))\right)\\
=&w^{2j-2}\alpha_q^{2j-2}\phi^{\rm odd}_{1,q}(w) +\sum^{j-2}_{t=1} K_{2j-1,t}(w)\alpha_q^{2t} + 2\phi^{\rm odd}_{2j-1,p-1}(w).
\end{align*}

For the even index, using the result proved for the odd index, we get
\begin{align*}
l^{+}_{2j}(w) - l^{-}_{2j}(w) 
=& 2\alpha_q U_{2j-1}(w) -\sum^{j-1}_{k=1}
\frac{2j-2k}{2j}V_{2k}(w)(l^{+}_{2j-2k}(w)-l^{-}_{2j-2k}(w))\\
&-\sum^{j-1}_{k=1} \frac{2j-2k}{2j}w\alpha_q U_{2k-1}(w)(l^{+}_{2j-2k}(w)+l^{-}_{2j-2k}(w))\\
&-\sum^{j-1}_{k=1} \frac{2j-2k+1}{2j}V_{2k-1}(w)(l^{+}_{2j-2k+1}(w)-l^{-}_{2j-2k+1}(w))\\
&-\sum^{j-1}_{k=1} \frac{2j-2k+1}{2j}w\alpha_q U_{2k-2}(w)(l^{+}_{2j-2k+1}(w)+l^{-}_{2j-2k+1}(w))\\
=&-\alpha_q^{2j-1}w^{2j-1}\phi^{\rm odd}_{1,q}(w) +\alpha_q\sum^{j-2}_{t=0} {D}_{2j,t}(w)\alpha_q^{2t},
\end{align*}
and proceeding as before, this gives the last formula in the thesis.

\end{proof}

\begin{corol}\label{c44} For all $j$ and all $0\leq q\leq p-2$, the Laurent expansion of the functions $\Phi^{\rm odd}_{2j+1,q}(s)$ at $s=0$ has coefficients: for $1\leq j \leq p-1$
\begin{align*}
&\Rz_{s=0}\Phi^{\rm odd}_{2j+1,q}(s) =\frac{2}{2j+1}\alpha_q^{2j} + \sum^{j-1}_{t=1} k_{2j+1,q,t}\alpha_q^{2t} +
2\Rz_{z=0}\Phi^{\rm odd}_{2j+1,p-1}(s),\hspace{20pt} &\Ru_{s=0}\Phi^{\rm odd}_{2j+1,q}(s) =0,\\
&\Rz_{s=0}\Phi^{\rm odd}_{2j+1,p-1}(s) =2\sum^{2j+1}_{k=1} k_{2j+1,p-1,k}\sum^{k+j}_{t=2}\frac{1}{2t-1} ,&\Ru_{s=0}\Phi^{\rm odd}_{2j+1,p-1}(s) =0,\\
\end{align*}
where the $k_{j,q,t}$ are real numbers, and for $j=0$,
\begin{align*}
\Rz_{s=0}\Phi^{\rm odd}_{1,q}(s) &=2\Rz_{s=0}\Phi^{\rm odd}_{1,p-1}(s)=2,&\Ru_{s=0}\Phi^{\rm odd}_{1,q}(s) &=0.\\
\end{align*}
\end{corol}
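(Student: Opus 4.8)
The plan is to feed the recurrence relations of Lemma~\ref{l15} into the linear integral transform $\phi\mapsto\Phi$ that defines $\Phi^{\rm odd}_{2j+1,q}(s)$, and then to read off the Laurent coefficients at $s=0$ from the known expansions of the transforms of the monomials $w^{n}$, these being exactly the computations performed inside the proofs of Lemmas~5.4, 5.6 and 5.10 of \cite{HS2}. First I would rewrite the odd-index recurrence of Lemma~\ref{l15} with the index shifted by one unit, so that for $1\le j\le p-1$ and $0\le q\le p-2$
\[
\phi^{\rm odd}_{2j+1,q}(w)=\alpha_q^{2j}\bigl(w^{2j+3}-w^{2j+1}\bigr)+\sum_{t=1}^{j-1}K_{2j+1,t}(w)\,\alpha_q^{2t}+2\,\phi^{\rm odd}_{2j+1,p-1}(w),
\]
where I used the base case $\phi^{\rm odd}_{1,q}(w)=w^{3}-w$ (this is the function written $\phi_{q,1}$ in Lemma~\ref{l15}). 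Since $\phi\mapsto\Phi$ is linear and the displayed identity is an identity of polynomials in $w$, applying the transform termwise converts it into the same identity for the $\Phi$'s, the summand $2\phi^{\rm odd}_{2j+1,p-1}$ becoming $2\Phi^{\rm odd}_{2j+1,p-1}(s)$. This reduces the Corollary to the Laurent expansion at $s=0$ of the transform of a single power $w^{n}$, together with the separate evaluation of $\Phi^{\rm odd}_{2j+1,p-1}(s)$.

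For those expansions I would quote two facts from \cite{HS2}: (i) the transform of each polynomial in $w$ occurring here is regular at $s=0$, which at once gives all the asserted vanishings $\Ru_{s=0}\Phi^{\rm odd}_{2j+1,q}(s)=0$; and (ii) the finite part $\Rz_{s=0}$ of the transform of $w^{n}$ is a harmonic-type partial sum, normalised so that $\Rz_{s=0}$ of the transform of $w^{2j+3}-w^{2j+1}$ equals $\tfrac{2}{2j+1}$ --- the case $j=0$ recovering $\phi^{\rm odd}_{1,q}(w)=w^{3}-w\mapsto\Rz_{s=0}\Phi^{\rm odd}_{1,q}(s)=2$. Substituting (i)--(ii) into the transformed recurrence: the first term contributes $\tfrac{2}{2j+1}\alpha_q^{2j}$, each $K_{2j+1,t}(w)$ contributes its finite part, which I would name $k_{2j+1,q,t}$, and the last term contributes $2\Rz_{s=0}\Phi^{\rm odd}_{2j+1,p-1}(s)$; this is the first displayed line of the Corollary.

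It remains to evaluate the $q=p-1$ case, where $\alpha_{p-1}=0$ and the recurrence of Lemma~\ref{l15} is vacuous. There $\phi^{\rm odd}_{2j+1,p-1}(w)$ is itself the explicit odd polynomial in $w$ coming from the uniform asymptotic expansion of the Bessel functions (Lemma~5.10 of \cite{HS2}); writing its coefficients as $k_{2j+1,p-1,k}$, attached in the natural indexing to $w^{2(k+j)+1}$, and applying the finite-part formula of (ii) to each such monomial (for which $\Rz_{s=0}$ of the transform contributes the inner sum $\sum_{t=2}^{k+j}\tfrac1{2t-1}$), the expansion reorganises into $2\sum_{k=1}^{2j+1}k_{2j+1,p-1,k}\sum_{t=2}^{k+j}\tfrac1{2t-1}$, which is the second displayed line; the case $j=0$ is the base case, checked by hand. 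The step I expect to be the genuine obstacle is the bookkeeping inside (ii): fixing the precise shift in the harmonic sums $\sum_{t=2}^{k+j}\tfrac1{2t-1}$ and verifying that the auxiliary polynomials $K_{2j+1,t}(w)$ --- and the $D$-polynomials from the proof of Lemma~\ref{l15} --- do not contribute to the residue at $s=0$. Once the single-monomial formulas of \cite{HS2} are granted, the remainder is formal manipulation of finite sums.
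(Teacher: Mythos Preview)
Your proposal is correct and follows essentially the same route as the paper: feed the recurrence of Lemma~\ref{l15} into the transform $\phi\mapsto\Phi$ and read off the Laurent data at $s=0$ from the monomial formulas (equations (9.5)--(9.6) of \cite{HS2}), exactly as the paper does. The one place where the paper is slightly more explicit than you is the vanishing of $\Ru_{s=0}$: rather than asserting regularity of the transform of each polynomial block, the paper writes $\phi^{\rm odd}_{2j+1,q}(\lambda)=\sum_{k=0}^{2j+1}k_{2j+1,q,k}\,w^{2k+2j+1}$, observes that $\phi^{\rm odd}_{2j+1,q}(0)=0$ forces $\sum_k k_{2j+1,q,k}=0$, and then the residue formula from \cite{HS2} gives $\Ru_{s=0}\Phi^{\rm odd}_{2j+1,q}(s)=\sum_k k_{2j+1,q,k}=0$; this is precisely the ``bookkeeping'' you flagged as the obstacle, and it resolves your worry about the $K_{2j+1,t}(w)$ at once.
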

\begin{proof} By Lemma  5.4 and Lemma 5.10,
\[
\phi^{\rm odd}_{2j+1,q}(\lambda) =  \sum^{2j+1}_{k=0}K_{2j+1,q,k}w^{2k+2j+1},\qquad  \phi^{\rm odd}_{2j+1,p-1}(\lambda) =  \sum^{2j+1}_{k=0}K_{2j+1,p-1,k}w^{2k+2j+1}
\]
where $w = \frac{1}{\sqrt{1-\lambda}}$, and $\phi^{\rm odd}_{2j+1,q}(0)= 0$, therefore $\sum^{2j-1}_{k=0}k_{2j+1,q,k} = 0$.
Using the formula in equation (9.6)\cite{HS2} and the residues for the Gamma function in equation (9.5)\cite{HS2},  we obtain
\[
\Ru_{s=0} \Phi^{\rm odd}_{2j+1,q}(s) = \sum^{2j+1}_{k=0} 
k_{2j+1,q,k} = 0.
\]

Using the same formulas of \cite{HS2}, but the result of Lemma \ref{l15}, we prove the formula for the finite part.
The formula  for $j=0$ follows by explicit knowledge of the coefficients $k_{0,p-1,1}$.
\end{proof}
Note that, with this corollary $F(q,0) = 2$ for $0\leq q \leq p-2$, and $F(p-1,0) = 1$.

\begin{lem}\label{Mjpk} For  $1\leq j \leq p-1$, we have that $M_{j}(p,k) = N_{j}(p,k)$.
\end{lem}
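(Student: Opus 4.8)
\textbf{Proof proposal for Lemma \ref{Mjpk}.}

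The plan is to mimic the structure of the proof of Lemma \ref{lema M0pk}, but now carrying the extra $\alpha_q$-powers and the nontrivial values of $F(q,j)$ supplied by Corollary \ref{c44}. Recall that
\[
M_j(p,k) = \sum^{p-1}_{q=0} (-1)^{q}\frac{2F(q,j)}{4(2p-2)!}\binom{2p-2}{q}\binom{-\frac{1}{2}-j}{k-j}\alpha_q^{-2j}
\sum^{p-1}_{l=k} e_{p-1-l}(d^{q})\alpha_q^{2l} \binom{-\frac{1}{2}}{l-k},
\]
and that $e_{p-1-l}(d^q)=f_{p-1-l}(\alpha_q)$, where by \eqref{F} we have $f_{h}(x)=\binom{p}{h}x^{2h}+\sum_{s=0}^{h-1}c^h_s x^{2s}$. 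First I would substitute this polynomial expansion into the inner sum over $l$, splitting $M_j(p,k)$ into a ``leading'' contribution coming from the top term $\binom{p}{p-1-l}\alpha_q^{2p-2-2l}$ and a ``remainder'' coming from the lower-order coefficients $c^{p-1-l}_s$. In the leading piece the $l$-dependence of the $\alpha_q$-power collapses: $\alpha_q^{-2j}\cdot\alpha_q^{2l}\cdot\alpha_q^{2p-2-2l}\cdot\alpha_q^{2j}$ (the last factor from $F(q,j)\sim\tfrac{2}{2j+1}\alpha_q^{2j}+\cdots$, up to the $\Phi_{2j+1,p-1}$ correction) has total degree $2p-2$ in $\alpha_q$, so the sum over $q$ becomes $\sum_q(-1)^q\binom{2p-2}{q}\alpha_q^{2p-2}$, which equals $\tfrac{(2p-2)!}{2}\cdot\text{(something)}$ by identity \eqref{idA2}. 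In the remainder piece every monomial in $\alpha_q$ has degree strictly less than $2p-2$, so $\sum_q(-1)^q\binom{2p-2}{q}\alpha_q^{2m}=0$ by \eqref{idA3}; this is exactly the mechanism used for $M_0$.

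The subtlety absent in the $M_0$ case is that $F(q,j)=\Rz_{s=0}\Phi^{\rm odd}_{2j+1,q}(s)$ is itself a polynomial in $\alpha_q^2$ of degree $j$ (not a monomial): by Corollary \ref{c44}, $F(q,j)=\tfrac{2}{2j+1}\alpha_q^{2j}+\sum_{t=1}^{j-1}k_{2j+1,q,t}\alpha_q^{2t}+2\Rz_{s=0}\Phi^{\rm odd}_{2j+1,p-1}(s)$. I would therefore expand $M_j(p,k)$ as a sum over the terms of $F(q,j)$ as well, and argue that only the top term $\tfrac{2}{2j+1}\alpha_q^{2j}$ can contribute: all the lower terms $k_{2j+1,q,t}\alpha_q^{2t}$ with $t<j$, when multiplied by the $\alpha_q^{-2j}$ prefactor and the inner $l$-sum, produce $\alpha_q$-monomials of total degree $<2p-2$ (since $2t-2j+2l+2(p-1-l)=2(p-1)-2(j-t)<2p-2$ even on the leading inner term, and strictly smaller on the remainder), hence vanish by \eqref{idA3}. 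Similarly the constant $2\Rz_{s=0}\Phi^{\rm odd}_{2j+1,p-1}(s)$ contributes $\alpha_q^{-2j}\cdot(\text{top }\le 2p-2)$, i.e. degree $2(p-1)-2j<2p-2$, and also vanishes. So $M_j(p,k)$ reduces to exactly the leading-times-leading contribution:
\[
M_j(p,k)=\binom{-\tfrac{1}{2}-j}{k-j}\cdot\frac{2}{2j+1}\cdot\frac{1}{2}\sum_{l=k}^{p-1}\binom{p}{p-1-l}\binom{-\tfrac{1}{2}}{l-k},
\]
where the $\tfrac12$ comes from evaluating $\sum_q(-1)^q\tfrac{1}{(2p-2)!}\binom{2p-2}{q}\alpha_q^{2p-2}=\tfrac12$ exactly as in Lemma \ref{lema M0pk}.

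It remains to evaluate the $l$-sum and match it to $N_j(p,k)$. Shifting the index by $l\mapsto l+k$ (or directly) this is $\sum_{l\ge k}\binom{p}{p-1-l}\binom{-1/2}{l-k}$, which is precisely the shape handled in Lemma \ref{lema M0pk} via identity \eqref{idB}: it evaluates to $\binom{p}{k+1}\tfrac{(k+1)!}{p!}\tfrac{(2p-1)!!}{(2k+1)!!}\tfrac{2^{k+1}}{2^p}$. Then I would plug in $\binom{-1/2-j}{k-j}=\tfrac{(-1)^{k-j}}{2^{k-j}}\tfrac{(2(k-j)-1+2j)!!}{(k-j)!}\cdot(\text{normalization})$ — more precisely $\binom{-1/2-j}{k-j}=(-1)^{k-j}\tfrac{(2k+1)!!}{2^{k-j}(k-j)!(2j+1)!!}$ — combine the double factorials, and collect powers of $2$; the $(2p-1)!!/2^p$ and $\tfrac{(2p-1)!}{4^p(p-1)!}$ are related through $(2p-1)!!=\tfrac{(2p-1)!}{2^{p-1}(p-1)!}$, so everything should telescope to
\[
M_j(p,k)=\frac{(2p-1)!}{4^p(p-1)!}\frac{1}{(p-1-k)!(2k+1)}\frac{(-1)^{k-j}2^{j+1}}{(k-j)!(2j+1)!!}=N_j(p,k).
\]
The main obstacle I anticipate is not any single identity but the bookkeeping in the vanishing argument: one has to verify carefully that \emph{every} cross term (lower-order $f_h$ coefficient $\times$ any $F$-term, and top $f_h$ coefficient $\times$ lower-order $F$-term) really has $\alpha_q$-degree strictly below $2p-2$ so that \eqref{idA3} applies, paying attention to the ranges $k\le l\le p-1$ and $1\le t\le j-1$, and to the possibility that the $\Phi^{\rm odd}_{2j+1,p-1}$ correction term might interact with the $k=0$ or $j=k$ boundary cases — just as the $k=0$ case of $M_0$ needed separate treatment in Lemma \ref{lema M0pk}. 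I would isolate any such boundary case and check it by the same explicit computation used there.
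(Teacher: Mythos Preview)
Your proposal is correct and follows essentially the same approach as the paper's proof: expand $F(q,j)$ via Corollary~\ref{c44}, expand $e_{p-1-l}(d^q)=f_{p-1-l}(\alpha_q)$ via \eqref{F}, use \eqref{idA3} to annihilate the cross terms of $\alpha_q$-degree below $2p-2$, and evaluate the surviving leading piece with \eqref{idA2} and \eqref{idB}. The paper organizes the computation into three explicit sub-cases $j=k<p-1$, $j=k=p-1$, and $j<k$, precisely to isolate the boundary phenomenon you flag in your last paragraph---in particular the case $j=k$, where the $q=p-1$ summand (with $\alpha_{p-1}=0$ and $F(p-1,j)=\Rz_{s=0}\Phi^{\rm odd}_{2j+1,p-1}(s)$, i.e.\ half of what the $q\le p-2$ formula would predict) interacts with the degree-zero part of the cross terms and must be checked by hand.
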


\begin{proof} Note that $j\leq k$, and hence $1\leq j \leq k \leq p-1$. Recall that
\[
F(q,j) = \frac{2}{2j+1}\alpha_q^{2j}+\sum^{j-1}_{t=1} k_{2j+1,q,t} \alpha_q^{2t} + 2\Rz_{z=0}\Phi^{\rm odd}_{2j+1,p-1}(s), 
\]
by Corollary \ref{c44}. Set $k_{2j+1,q,0} = 2\Rz_{z=0}\Phi_{2j+1,p-1}(s)$.  We split the proof in three cases. First, for $j = k<p-1$, we have
\begin{align*}
M_j(p,j) =& \sum^{p-1}_{q=0} (-1)^{q}\frac{F(q,j)}{2(2p-2)!}\binom{2p-2}{q}
\sum^{p-1}_{l=j} e_{p-1-l}(d^{q})\alpha_q^{2l-2j} \binom{-\frac{1}{2}}{l-j}\\
=& \sum^{p-1}_{q=0} (-1)^{q}\frac{\alpha_q^{2j}}{(2j+1)(2p-2)!}\binom{2p-2}{q}
\sum^{p-1}_{l=j} f_{p-1-l}(\alpha_q)\alpha_q^{2l-2j} \binom{-\frac{1}{2}}{l-j}\\
&+\sum^{j-1}_{t=0} k_{2j+1,q,t} \sum^{p-1}_{q=0} (-1)^{q}\frac{\alpha_q^{2t}}{(2p-2)!}\binom{2p-2}{q} \sum^{p-1}_{l=j}
f_{p-1-l}(\alpha_q)\alpha_q^{2l-2j} \binom{-\frac{1}{2}}{l-j}
\end{align*}

Using the formula in equation (\ref{F}) for the functions
 $f_{p-1-l}(\alpha_q)$, we get
\begin{align*}
M_j(p,j)=& \sum^{p-1}_{q=0} (-1)^{q}\frac{1}{(2j+1)(2p-2)!}\binom{2p-2}{q}
\sum^{p-1}_{l=j} \binom{p}{p-1-l}\alpha_q^{2p-2-2l}\alpha_q^{2l} \binom{-\frac{1}{2}}{l-j}\\
&+\sum^{p-1}_{q=0} (-1)^{q}\frac{1}{(2j+1)(2p-2)!}\binom{2p-2}{q}
\sum^{p-2}_{l=j} \sum^{p-2-l}_{s=0}c_s \alpha_q^{2s + 2l} \binom{-\frac{1}{2}}{l-j}\\
&+\sum^{j-1}_{t=0} k_{2j+1,q,t} \sum^{p-1}_{q=0} (-1)^{q}\frac{1}{(2p-2)!}\binom{2p-2}{q}
\sum^{p-1}_{l=j} \binom{p}{p-1-l}\alpha_q^{2p-2+2t-2j} \binom{-\frac{1}{2}}{l-j}\\
&+\sum^{j-1}_{t=0} k_{2j+1,q,t} \sum^{p-1}_{q=0} (-1)^{q}\frac{1}{(2p-2)!}\binom{2p-2}{q}
\sum^{p-2}_{l=j} \sum^{p-2-l}_{s=0}c_s\alpha_q^{2s+2l+2t-2j} \binom{-\frac{1}{2}}{l-j}\\
= &\frac{1}{(2j+1)}\sum^{p-1}_{q=0} (-1)^{q}\frac{1}{(2p-2)!}\binom{2p-2}{q}\alpha_q^{2p-2}
\sum^{p-1}_{l=j} \binom{p}{p-1-l} \binom{-\frac{1}{2}}{l-j}\\
=& \frac{1}{2(2j+1)}\sum^{p-1}_{l=j} \binom{p}{p-1-l} \binom{-\frac{1}{2}}{l-j}
=\frac{1}{2(2j+1)}\frac{1}{(p-1-j)!}\frac{(2p-1)!!}{(2j+1)!!}\frac{2^{j+1}}{2^{p}}\\
&= \frac{1}{(2j+1)(p-1-j)!}\frac{(2p-1)!}{(2j+1)!!}\frac{2^{j}}{2^{2p-1}}\frac{}{(p-1)!} =N_j(p,j),
\end{align*}
where the first three terms in the first equation vanish because $s+l<p-1$ and $t-j\leq-1$. The second case is
$j=k=p-1$. Then,
\begin{align*}
M_{p-1}(p,p-1) =& \sum^{p-1}_{q=0} (-1)^{q}\frac{F(q,p-1)}{2(2p-2)!}\binom{2p-2}{q}\\
=& \sum^{p-1}_{q=0} (-1)^{q}\frac{\alpha_q^{2p-2}}{(2p-1)(2p-2)!}\binom{2p-2}{q}\\
&+\sum^{p-2}_{t=0} k_{2j+1,p-1,t}
\sum^{p-2}_{q=0} (-1)^{q}\frac{\alpha_q^{2t}}{2(2p-2)!}\binom{2p-2}{q}\\
&+ \sum^{p-2}_{t=0} \frac{ k_{2j+1,p-1,t}}{2} (-1)^{p-1}\frac{\alpha_{p-1}^{2t}}{2(2p-2)!}\binom{2p-2}{p-1}\\
= &\frac{1}{2(2p-1)}+ k_{2j+1,p-1,0} \sum^{p-1}_{q=0} (-1)^{q}\frac{1}{2(2p-2)!}\binom{2p-2}{q}\\
&-k_{2j+1,p-1,0}(-1)^{p-1}\frac{1}{2(2p-2)!}\binom{2p-2}{p-1}\\
&+ \frac{k_{2j+1,p-1,0}}{2} (-1)^{p-1}\frac{1}{2(2p-2)!}\binom{2p-2}{p-1}\\
 =& \frac{1}{2(2p-1)}+ \frac{k_{2j+1,p-1,0}}{2} \frac{(-1)^{p-1}}{(p-1)!(p-1)!} -
\frac{k_{2j+1,p-1,0}}{2}\frac{(-1)^{p-1}}{(p-1)!(p-1)!}\\
=& \frac{1}{2(2p-1)} = N_{p-1}(p,p-1).
\end{align*}

The last case is  $1\leq j <k$. Then,
\begin{align*}
M_j(p,k) &= \sum^{p-1}_{i=0} (-1)^{q}\frac{2F(q,j)}{4(2p-2)!}\binom{2p-2}{q}\binom{-\frac{1}{2}-j}{k-j}\alpha_q^{-2j}
\sum^{p-1}_{l=k} e_{p-1-l}(d^{q})\alpha_q^{2l} \binom{-\frac{1}{2}}{l-k},\\
&= \binom{-\frac{1}{2}-j}{k-j}\sum^{p-1}_{q=0} (-1)^{q}\frac{1}{(2j+1)(2p-2)!}\binom{2p-2}{q}
\sum^{p-1}_{l=k} \binom{p}{p-1-l}\alpha_q^{2p-2-2l}\alpha_q^{2l} \binom{-\frac{1}{2}}{l-k}\\
&+\binom{-\frac{1}{2}-j}{k-j}\sum^{p-1}_{q=0} (-1)^{q}\frac{1}{(2j+1)(2p-2)!}\binom{2p-2}{q}
\sum^{p-2}_{l=k} \sum^{p-2-l}_{s=0}c_s \alpha_q^{2s + 2l} \binom{-\frac{1}{2}}{l-k}\\
&+\binom{-\frac{1}{2}-j}{k-j}\sum^{j-1}_{t=0} k_{2j+1,q,t} \sum^{p-1}_{q=0} (-1)^{q}\frac{1}{(2p-2)!}\binom{2p-2}{q}
\sum^{p-1}_{l=k} \binom{p}{p-1-l}\alpha_q^{2p-2+2t-2j} \binom{-\frac{1}{2}}{l-k}\\
&+\binom{-\frac{1}{2}-j}{k-j}\sum^{j-1}_{t=0} k_{2j+1,q,t} \sum^{p-1}_{q=0} (-1)^{q}\frac{1}{(2p-2)!}\binom{2p-2}{q}
\sum^{p-2}_{l=k} \sum^{p-2-l}_{s=0}c_s\alpha_q^{2s+2l+2t-2j} \binom{-\frac{1}{2}}{l-k}\\
\end{align*}
\begin{align*}
=& \binom{-\frac{1}{2}-j}{k-j}\sum^{p-1}_{q=0} (-1)^{q}\frac{1}{(2j+1)(2p-2)!}\binom{2p-2}{q}\alpha_q^{2p-2}
\sum^{p-1}_{l=k} \binom{p}{p-1-l} \binom{-\frac{1}{2}}{l-k}\\
=& \binom{-\frac{1}{2}-j}{k-j}\frac{1}{2(2j+1)}\sum^{p-1}_{l=k} \binom{p}{p-1-l} \binom{-\frac{1}{2}}{l-k}\\
= &\binom{-\frac{1}{2}-j}{k-j}\frac{1}{2(2j+1)}\frac{(2p-1)!!}{(p-1-k)!(2k+1)!!2^{p-k-1}}\\
= &\frac{(-1)^{k-j}}{(k-j)!}\frac{2^{j}}{2^{k}}\frac{(2k-1)!!}{(2j-1)!!}\frac{1}{2(2j+1)}\frac{(2p-1)!!}{(p-1-k)!(2k+1)!!2^{p-k-1}}\\
=& \frac{(-1)^{k-j}}{(k-j)!}\frac{2^{j}}{2^{2p-1}(p-1)!}\frac{1}{(2j+1)!!}\frac{(2p-1)!}{(p-1-k)!(2k+1)} = N_j(p,k).
\end{align*}

\end{proof}

\subsection{The proof that $A_{\rm BM}(\b C_l S^{2p}_{\sin\alpha})= \log T_{\rm AB} (C_l S^{2p}_{\sin\alpha})$} The proof of this case follows with the 
same argument of the odd case, the unique difference are the functions $\Phi_{2j}^{\rm even}(s)$. But with the same strategy as previously, it is possible 
to prove that, for all $j$ and all $0\leq q\leq p-1$, the Laurent expansion of the functions $\Phi^{\rm even}_{2j,q}(s)$ at $s=0$ has coefficients: for $1\leq j \leq p$
\begin{align*}
\Rz_{s=0}\Phi^{\rm even}_{2j,q}(s) &= -\frac{\alpha_q^{2j-1}}{j} + \alpha_q \sum_{t=0}^{j-2} K_{2j,t} \alpha_q^{2t},\hspace{30pt}
\Ru_{s=0}\Phi^{\rm even}_{2j,q}(s) =0,
\end{align*}
where the $K_{2j,t}$ are real numbers. With this information we prove the Theorem \ref{teven}.

\end{document}